\documentclass[11pt, oneside,a4paper]{article}
\usepackage[ansinew]{inputenc}
\usepackage[english]{babel}
\usepackage{amsbsy,amscd,amsfonts,amsmath,amsopn,amssymb,amstext,amsthm,amsxtra,array,color,fancybox,float,graphicx,latexsym}
\usepackage{mathrsfs,subfigure,srcltx,times,tikz,url,comment}
\usepackage{subfigure,srcltx,times,tikz,url,leftidx, caption}

\voffset 0cm \hoffset 0cm
\leftmargin 2cm \rightmargin 2cm
 \textwidth 12.8cm
 \textheight 20cm
 \hyphenation{}
 \listfiles
\date{}



\newcommand{\mx}[1]{W_n}


\newcommand{\sucen}[2]{{#1_{1},\hdots,#1_{#2}}}








\newcommand{\N}{\mathbb{N}}
\newcommand{\Z}{\mathbb{Z}}

\newcommand{\R}{\mathbb{R}}

\newcommand{\BB}{\mathcal{B}}

\newcommand{\DD}{\mathcal{D}}

\newcommand{\GG}{\mathcal{G}}

\newcommand{\NN}{\mathcal{N}}

\newcommand{\PP}{\mathcal{P}}
\newcommand{\QQ}{\mathcal{Q}}

\newcommand{\VV}{\mathcal{V}}

\newcommand{\SSSS}{\mathcal{S}}


\newcommand{\DDD}{\mathscr{D}}

\newcommand{\PPP}{\mathscr{P}}

\newcommand{\id}{\operatorname{id}}

\renewcommand{\int}{\operatorname{int}}

\newcommand{\dd}{\operatorname{d}}

\newcommand{\row}{\operatorname{row}}

\newcommand{\col}{\operatorname{col}}

\newcommand{\tcone}{\operatorname{t--cone}}
\newcommand{\tconv}{\operatorname{t--conv}}

\newcommand{\diag}{\operatorname{diag}}

\newcommand{\sign}{\operatorname{sign}}

\newtheorem{thm}{Theorem}[section]
\newtheorem{lem}[thm]{Lemma}
\newtheorem{dfn}[thm]{Definition}
\newtheorem{cor}[thm]{Corollary}
\newtheorem{nota}[thm]{Notation}
\newtheorem{prop}[thm]{Proposition}
\newtheorem{ex}[thm]{Example}

\newtheorem{rem}[thm]{Remark}

\title{Quasi--Euclidean classification of Alcoved Convex Polyhedra\footnote{DOI: 10.1080/03081087.2019.1572065}}
\author{
M.J. de la Puente
\thanks{Partially supported by  Ministerio de Econom\'{\i}a y Competitividad, Proyecto I+D MTM2016-76808-P
and by UCM research group 910444.}
\\ Dpto. de \'{A}lgebra, Geometr\'{\i}a y Topolog\'{\i}a\\ Facultad de Matem\'{a}ticas\\ Universidad Complutense\\\texttt{mpuente@ucm.es}\\ Phone: 34--91--3944659
}

\begin{document}
\maketitle
\begin{abstract}

We give the quasi--Euclidean
classification of  the  maximal (with respect to the $f$--vector)  alcoved  polyhedra. The $f$--vector  of these
maximal  convex bodies  is $(20,30,12)$, so they are  simple dodecahedra.
We find eight  quasi--Euclidean  classes. This classification, which preserves angles, is finer than the known
combinatorial classification (found in 2012 by Jim\'{e}nez and de la Puente), which has only six classes.
Each alcoved polyhedron $\PP$ is represented by a unique visualized idempotent matrix $A$.
Some  2--minors of $A$ are invariants of $\PP$: they are  the tropical edge--lengths of $\PP$.

\textbf{Keywords:} tropical algebra; max--plus algebra;   normal matrix; idempotent matrix; visualized matrix; alcoved polyhedron;  dodecahedron; convex body; quasi--Euclidean class; perturbation; invariant, edge--length; tropical distance.

\textbf{AMS Classification:} 52B10; 52B12; 15A80
\end{abstract}

\section{Introduction}

\label{ALGEBRAIZATION}
Alcoved polyhedra in $\R^3$
make a rich, very well--behaved and beautiful class of convex bodies.
They have facet equations only of two types: $x_i=cnst$  and $x_i-x_j=cnst$. Two consequences follow. First, the $f$--vector,
the facet sequence, the facet angles and the dihedral angles are restricted. Second, the constant terms of the facet equations can
be assembled into a matrix, leading to an \textbf{algebraization  of alcoved polyhedra}.

Every  alcoved polyhedron  $\PP\subset \R^3$
can be represented by a normal idempotent (NI) order four matrix $A$. We write $\PP=\PP(A)$ to indicate this relationship.
To a certain translate  \label{com:translate} of $\PP$  there corresponds a unique matrix, which in addition to the former properties,
 is visualized (VI). In this paper we show that an alcoved polyhedron $\PP$ can be handled  by handling its corresponding matrix $A$.

\label{MAXIMAL}
The $f$--vector of a polyhedron  $\PP$ lists the number of vertices, edges and facets of $\PP$. For an alcoved one, it is known to be  bounded above by $(20,30,12)$.
\textbf{Maximal} alcoved polyhedra are those having such maximal $f$--vector. They are  simple (i.e., trivalent in vertices)
dodecahedra with 20 vertices and 30 edges.

\label{QUASI_EUCLIDEAN}
In this paper we classify alcoved polyhedra  from a topological,  affine and Euclidean point of view, i.e., we transform them by  small perturbations, affine maps and
space symmetries. Here is what we do.  First, we declare all boxes to be equivalent (disregarding length).
Having done so, we are left with the classification of alcoved polyhedra having a common bounding box. Say this common bounding box is
$\QQ$, the unit cube centered at the origin. Two alcoved polyhedra $\PP$ and $\PP'$ whose bounding box is $\QQ$ are equivalent for us,
in two cases:  if
there exists a symmetry of $\QQ$ taking $\PP$ bijectively onto $\PP'$ or
if $\PP'$ is a sufficiently small perturbation of $\PP$ (or a combination of both). Finally,
when we want to compare two arbitrary alcoved polyhedra $\PP$ and $\PP'$, first  we   transform their bounding boxes
$\BB$ and $\BB'$ into  $\QQ$  by bijective affine maps $f$ and $f'$. If
$f(\PP)$ and  $f'(\PP')$ are equivalent,as described above, then we define the original $\PP$ and $\PP'$ to be equivalent.
 It is an   angle--preserving equivalence  relation  between maximal alcoved polyhedra.
We have composed the
expression  \textbf{quasi--Euclidean} equivalence to refer to  such a relation.

\label{MAIN_THM}
Trivially, in $\R^2$ the maximal alcoved polygons  are hexagons (slopes of sides being $0,1,+\infty $) and  there exists just one quasi--Euclidean class of such. In $\R^3$ the maximal alcoved polyhedra  are dodecahedra. Our main  theorem  goes as follows \label{com:maximal}

\begin{thm}\label{thm:eight_classes}
The quasi--Euclidean classification in the family of maximal alcoved dodecahedra in $\R^3$ has eight classes.
\end{thm}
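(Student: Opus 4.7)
My plan is to reduce the classification to an orbit count under a finite group action, using the matrix and parameter representation established in Section~\ref{ALGEBRAIZATION}.

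\emph{Step 1: parametrization.} Any maximal alcoved dodecahedron has exactly twelve facets: six face the bounding box $\BB$, with equations $x_i=a_i^\pm$, and the remaining six are oblique, with equations $x_i-x_j=c_{ij}$ indexed by the ordered pairs $i\ne j$ in $\{1,2,3\}$. After the normalization of Section~\ref{QUASI_EUCLIDEAN} that sends $\BB$ to $\QQ=[-1,1]^3$, each polyhedron is determined by the vector $c=(c_{ij})\in\R^6$, and therefore by its unique visualized idempotent matrix $A$. The locus $\MM\subset\R^6$ of admissible parameters (those yielding $f$--vector $(20,30,12)$) is an open semialgebraic set cut out by strict inequalities guaranteeing that every candidate facet is non--degenerate and every vertex is simple; perturbation--equivalence on $\MM$ then coincides with lying in the same connected component.

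\emph{Step 2: the symmetry group.} The symmetries of $\QQ$ that preserve the set of alcoved facet normals form a subgroup $G$ of order $12$, namely $G=S_3\times\{\pm I\}$, generated by the coordinate permutations and the central involution $x\mapsto-x$; a single--coordinate reflection is excluded because it sends the equation $x_i-x_j=c$ to $x_i+x_j=-c$, which is not alcoved. On parameters, a permutation acts by $\sigma\cdot(c_{ij})=(c_{\sigma(i)\sigma(j)})$ and the central involution by $(-I)\cdot(c_{ij})=(c_{ji})$. By the definition of quasi--Euclidean equivalence, its classes are in bijection with the $G$--orbits on the set of connected components of $\MM$.

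\emph{Step 3: enumeration, and the main obstacle.} The final step is to refine the known six--class combinatorial stratification (Jim\'{e}nez and de la Puente, 2012) of $\MM$ to the quasi--Euclidean one. Each combinatorial class $\MM_k$ is a union of finitely many open cells of $\MM$ cut out by strict linear inequalities among the $c_{ij}$. For each $k$ I would (a) enumerate those cells, (b) identify the subgroup of $G$ stabilizing $\MM_k$ setwise, and (c) count its orbits on the cells. The hard part is step (c): certifying that exactly two of the six combinatorial classes admit a ``chiral'' splitting under $G$---mirror--related configurations in $\MM$ that no element of $G$ interchanges---while the remaining four classes yield a single $G$--orbit each, producing $4+2\cdot 2=8$. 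The invariants flagged in the abstract---the tropical edge--lengths of $\PP$, realized as specific $2$--minors of the visualized idempotent matrix $A$---are the decisive tool: they transform predictably under the formulas in Step 2, so matching or failing to match them across representatives selected in each cell will settle every orbit question by a finite calculation and thereby certify the count.
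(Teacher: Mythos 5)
Your Steps 1--2 reproduce the paper's framework: normalize the bounding box to the cube $\QQ$, parametrize a maximal dodecahedron by six oblique facet constants (the paper's cant data, read off the visualized idempotent matrix), and act by the order--12 group $\Z_2\times\Sigma_3$ (the paper's $\GG_3$). The gap is Step 3, which is where the entire content of the theorem lies and which you leave as a plan. The paper closes it by exhibiting a complete discrete invariant: the signs of the six entries of the difference tuple $d$ of $A$ (equivalently, of six specific $2$--minors of $A$). Proposition \ref{prop:type_North_Cask} and Corollaries \ref{cor:type_North_Cask_VI}, \ref{cor:type_South_Cask_VI}, \ref{cor:type_global_2} show these signs determine the North and South Cask types; Remark \ref{rem:Belt} shows the Equatorial Belt is forced by the Casks; Lemma \ref{lem:action_on_difference} computes the induced action of $\GG_3$ on difference tuples (a signed subgroup of $\Z_2\times\Sigma_6$); and a finite check on the $62$ admissible sign tuples (the two constant ones being excluded by Corollary \ref{cor:impossible}) yields exactly $8$ orbits. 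Without an invariant of this kind your reduction ``quasi--Euclidean classes are $G$--orbits of connected components of $\MM$'' is itself unproved --- you would still have to show that the components are precisely the sign chambers and that no two of them merge under small perturbation combined with symmetry --- and no count of eight is actually obtained.

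Moreover, the mechanism you single out in step (c) would not go through as stated: you expect two combinatorial classes to split ``chirally'', into mirror--related configurations that no element of $G$ interchanges. But $G=\GG_3$ contains the reflections $-(ij)$ in the planes $x_i=x_j$, so a polyhedron and its mirror image are always quasi--Euclidean equivalent (Remark \ref{rem:chiral}). The actual splitting, visible in Table \ref{table:qe_vs_combinatorial}, has a different source: combinatorial class 1 contains both the Cask pair $(4.5.6)/(6.5.4)$ (class $QE3$) and the pair $(5.5.5)$ right$/(5.5.5)$ left (class $QE6$), which share the invariants $(t,p,h)$ yet are related neither by an element of $\GG_3$ nor by perturbation; similarly combinatorial class 2 splits into $QE2$ and $QE8$. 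So although your target count $4+2\cdot 2=8$ agrees with the paper's table, the certification you yourself flag as ``the hard part'' is both missing and aimed at the wrong dividing line.
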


\label{NON--MAXIMAL ALCOVED}
Every non--maximal alcoved polyhedron arises from a maximal one, when  some edges collapse,  and simplicity is lost. Instances of such
are boxes.
With similar arguments, the  quasi--Euclidean classification of  non--maximal alcoved polyhedra is possible to devise, but we believe
it is far less interesting (and far more involved).  \label{com:non_maximal}

\medskip
\label{IDEA OF PROOF}
The  proof of the main theorem  is completed in
p.~\pageref{proof:main_thm}. It is reached after the following steps.
In each alcoved polyhedron $\PP$,
two distinguished vertices are called \textbf{North  and South  Poles}, and marked $\NN$  and $\SSSS$. After  an idea of Kepler's,
\textbf{the polyhedron $\PP$ is the union of three parts:  North Cask, South Cask and Equatorial Belt}.
For instance, each box has a trivial Equatorial Belt. Each Polar Cask has a \textbf{Cask type}. Cask types are described by
a \textbf{vector} and a \textbf{chirality word}.  In the maximal case, the vector is $(p.q.r)$, with $p,q,r\in\{4,5,6\}$ and $p+q+r=15$.
The chirality word is either \textbf{left} or \textbf{right} and it is omitted whenever it can be deduced.
In the maximal case, we show that  both the Equatorial Belt and the quasi--Euclidean class are determined only by the North and South
Cask types. The Cask types are determined only by the signs of six  2--minors of the  NI  matrix $A$ representing $\PP$.
Ultimately, the Cask types  are determined by the signs of a 6--tuple, called \textbf{difference tuple}, which is computed from the
perturbation matrix $E$ of $A$ in a very direct way (see example \ref{ex:example}).

\medskip
\label{CLASSES}
The classes are denoted  $QE1,QE2,\ldots,QE8$ and the classification is  shown in table \ref{table:eight_classes}, which reads
as follows.
The rectangle on the first row left describes a representative  of class $QE1$: it is a dodecahedron having a North Cask type vector $(4.5.6)$ and a South Cask type  vector   $(4.5.6)$.
This means that  the North Cask is composed of  the facet $x_1=cnst$, which is a 4--gon, the facet $x_2=cnst$ a 5--gon, and the
facet $x_3=cnst$ a 6--gon and they all meet at $\NN$.  The chirality word here can be deduced from the former data and has
been omitted. It is left. In the South Cask there is  a facet $x_1=cnst$, which is a 4--gon, a facet $x_2=cnst$ a 5--gon, and a
facet $x_3=cnst$ a 6--gon. Similarly for other rectangles in the table.

\begin{table}[ht]
\begin{center}
\begin{tabular}{| >{$}l<{$}| >{$}l<{$}| >{$}l<{$}| >{$}l<{$}| >{$}l<{$}|}
\hline
\NN (4.5.6)&\NN (4.5.6)&\NN (4.5.6)&\NN (4.5.6)&\NN (4.5.6)\\
\SSSS (4.5.6)&\SSSS (5.6.4)&\SSSS (6.5.4)&\SSSS (4.6.5)&\SSSS (5.4.6)\\
QE 1&QE 2&QE 3&QE 4&QE 5\\
\hline
&\NN (5.5.5)\ \text{right}&\NN (4.5.6)\ \text{left}&\NN (5.4.6)\ \text{right}&\\
&\SSSS (5.5.5)\ \text{left}&\SSSS (5.5.5)\ \text{left}&\SSSS (5.5.5)\ \text{left}&\\
&QE 6&QE 7&QE 8& \\
\hline
\end{tabular}
\end{center}
\caption{The eight quasi--Euclidean classes of maximal alcoved dodecahedra in $\R^3$. In each case,  the third line shows the class name.
The first and second  lines describe the North and South Cask types of one  representative of the class.}
\label{table:eight_classes}
\end{table}

\label{CHIRALITY}
Alcoved polyhedra may have pentagonal facets and, as soon as pentagons appear, \textbf{chirality} appears too. Chirality
changes the orientation.
A chiral copy of $\PP(A)$  belongs to the same quasi--Euclidean
class. Algebraically, we
produce a chiral copy of $\PP(A)$ by letting some group element act on  the matrix $A$.

\medskip

\label{KEY FACT}
Our starting point is the known fact that a  \textbf{normal idempotent of order $n$ matrix $A$  provides an  alcoved polytope  $\PP(A)$ in
$\R^{n-1}$},  idempotency with respect to \textbf{tropical multiplication}. Translations of $\PP(A)$ correspond to conjugations of $A$
by diagonal matrices. It is particularly nice when $\PP$ is \textbf{cornered}, meaning that the largest point in $\PP=\PP(A)$ is
the origin. In such a case, the corresponding matrix  $A$ is \textbf{visualized idempotent} (VI) matrix, a very nice matrix to
compute with.
\medskip

\label{2--MINORS AND NORTH AND SOUTH CASKS}
Consider $n=4$. For a VI matrix $A$, we prove that the \textbf{tropical edge--lengths} of $\PP(A)$  are equal to either  the
entries  of the box matrix $B$ or the entries of the
perturbation matrix $E$, where $A$ is decomposed as the sum $A=B-E$ (classical sum here). This algebraization of the edge--lengths
of $\PP(A)$ is a key fact.
We prove that all these entries (of $B$ and of $E$) are 2--minors of $A=B-E$. The 2--minors are certain sums and differences of
matrix entries.

We prove that 2--minors are preserved by visualization.
This means that the edge--lengths of $\PP(A)$ are certain 2--minors of $A$, when $A$ is only NI (but perhaps not VI, and $A$ does not admit a box--perturbation decomposition). In other words,
\emph{certain 2--minors of $A$  are metric invariants of $\PP(A)$}, for any NI order 4 matrix $A$.

For $A=B-E$, the bounding box of $\PP(A)$ is just $\PP(B)$ and the \textbf{cant} and \textbf{difference tuples} $c$  and $d\in\R^6$
are  obtained easily
from the matrix $E$. Both $B$ and $c$ carry the metric information of the polyhedron $\PP(A)$. The difference tuple $d$ carries
the quasi--Euclidean class information of $\PP(A)$.
With these tools, we first study the North Cask of $\PP(A)$. Then, we study the South Cask of $\PP(A)$, by turning $\PP(A)$ around
by a Polar Exchange, which amounts to
transforming the matrix $A$ under the action of some group element.

\label{QUASI_EUCLIDEAN AGAIN}
Our classification is both geometrical (in the sense of Euclidean geometry) and topological.
Indeed, for polyhedra whose bounding
box is the unit  cube $\QQ$, we
say that  two alcoved polyhedron $\PP$, $\PP'$ are equivalent whenever either a cube symmetry takes $\PP$ onto $\PP'$ or when $\PP'$ is
a small perturbation of $\PP$, or both. The \textbf{group} of cube
 \textbf{symmetries} relevant here, denoted $\GG_3$, is isomorphic to  $\Z_2\times \Sigma_3$. Here $\Sigma_3$ denotes the
permutation group in $3$ symbols.

\medskip

The  ideas of this paper have been applied to exact volume computation in \cite{Puente_Claveria}.
Our techniques might admit a  generalization to alcoved polytopes in higher dimensions, but it is certainly not straightforward.
Thus  we have a sound reason to work today with alcoved polyhedra in 3--dimensional space. Other more general alcoved polytopes arise
from root systems.
There is a connection  between alcoved polyhedra and the root system $A_3$. Indeed, the facet exterior normal vectors of an alcoved polyhedron are $\{\pm u_i:i\in[3]\}\cup\{\pm (u_i-u_j): i\neq j\in[3]\}$, a proper subset of the  $A_3$ roots, which are $\{\pm u_i:i\in[3]\}\cup\{\pm u_i\pm u_j: i\neq j\in[3]\}$. Here  $(u_1,u_2,u_3)$ denotes the canonical basis in $\R^3$.

\medskip
Alcoved polytopes have been studied in great generality  since 2007, from a combinatorial point of view;
see \cite{Lam_Postnikov,Lam_PostnikovII,Postnikov_per}.
In relation with
Tropical Mathematics,  they appeared earlier in connection with max--plus spectral theory and matrix eigenspaces
in \cite{Akian_HB,Akian_Gaubert,Akian_Gau_Walsh} (see also the bibliography in the survey \cite{Akian_HB}). In relation
to classical convex polytopes and polytopal complexes,  they  have been studied  in the foundational paper \cite{Develin_Sturm}, and subsequent papers \cite{Jimenez_Puente,Joswig_Kulas,Joswig_Sturm_Yu,Puente_kleene,Tran,Werner_Yu}.
In the recent  book \cite{Bump_Sch}, it is shown that alcoved polytopes  have a crystal base  (or Kashiwara crystal) structure.
In relation with  Jacobian curves, alcoved polytopes have been used  in \cite{Zharkov}, in order to prove a classical result on
algebraic curves
by means of tropical techniques.

\medskip
\textbf{Normal matrices} (and slightly weaker notions) have been studied for more than fifty years
(see the pioneer works \cite{Afriat,Blyth,Yoeli}),
under various different names.
In connection with
Tropical Mathematics, they appear in \cite{Butkovic,Butkovic_Libro,Butkovic_S,Butkovic_Sch_Ser,Puente_kleene,Sergeev} and very recently in \cite{Yu_Zhao_Zeng}.
In connection with Automata Theory, Scheduling Theory, Computer Science, Discrete Event Systems and  Dynamical Programming, normal matrices  appear in \cite{Belman,Berthomieu,Dima,Mine} among other, where  they are  named difference bounded matrices (DBM). Visualization of matrices is one way to standardize \textbf{normal idempotent matrices}, appearing
in \cite{Akian_HB,Sergeev_Sch_But}. Tropical Mathematics is
an emerging area with multiple connections to Analysis, Algebraic Geometry, Representations of Lie Algebras, Mathematical Economics
and Algebra:
see \cite{Brugalle_fran,Brugalle_engl,Franceses,Rusos,Litvinov_ed,Litvinov,Litvinov_ed_2,Mikhalkin_T,Richter,Speyer_Sturm_2}
and bibliographies therein.

\medskip
The paper is organized as follows. In section \ref{sec:labels} we introduce  left and right pentagons, North and South Poles, and
some combinatorial labels of  vertices of an alcoved polyhedron. In section \ref{sec:box_and_cube}
we present  boxes and cubes, together with their matrices. These are   the easiest alcoved polyhedra and the easiest matrices. More
general matrices are introduced in
section \ref{sec:alcoved_poly_and_mat}. In this section we gather the \textbf{core facts}:  we restrict ourselves to working with
normal idempotent matrices (NI)  because they provide alcoved polytopes (see the pioneer paper \cite{Develin_Sturm} on tropical
convexity vs. classical convexity).   Even more, if we
restrict ourselves
to visualized idempotent matrices $A$ (VI for short), then
the edge--lengths of the corresponding alcoved polyhedron (which is cornered) can easily be read from the decomposition $A=B-E$
of the matrix. In a subsection, we introduce the useful notion of 2--minors of a matrix $A$. Some of them are edge--lengths of $\PP(A)$.
In section \ref{sec:from_box_to_poly} we introduce bounding boxes, perturbations and cant and difference tuples. More
precisely,  \emph{every alcoved polytope is the perturbation of  a box, and every box is an unperturbed alcoved polyhedron}.
The perturbation consists in canting (i.e., beveling) a cycle of  box edges. In section \ref{sec:symmetries} we introduce the
subgroup, denoted $\GG_3$, of cube symmetries fixing a given diagonal.
This group acts on the set $M_4^{NI}$ of normal idempotent matrices of order four.
In the short sections \ref{sec:maximality} and \ref{sec:dist} we make precise the notions of maximality and of tropical distance.
Section \ref{sec:casks_and_belt} contains the heavy work: we define North and  South  Casks  and we show that Cask types of a maximal
polyhedron $\PP(A)$ depend on the signs of certain 2--minors of $A$. Ultimately, the Cask types depend on the signs of the difference
tuple $d$ of $A$. The quasi--Euclidean classification is presented
in section \ref{sec:qe_classif}. There is an \textbf{action of a  group} isomorphic to $\GG_3$ on a set of sign 6--tuples. The \textbf{orbit} of
$\sign(d)$, the sign
of the difference tuple $d$ of $A$, provides the equivalence class of $\PP(A)$. A worked  example is given in
p.~\pageref{ex:example}.

\medskip
The relationship of the present classification with the coarser combinatorial classification (found in 2012 by Jim\'{e}nez and de la Puente \cite{Jimenez_Puente}) is shown in
table \ref{table:qe_vs_combinatorial}. Positive answers (in the realm of alcoved polyhedra) to two open questions are given
in section \ref{sed:final}.

The number 8 plays two roles in this paper: on the one hand, it is the number of North and South Cask Types and, on the other hand, it is  the number of quasi--Euclidean clases.

\medskip

\textbf{Notations and colors:} in figures, the origin is  marked with a red dot and generators are marked
with blue dots, three--generated vertices are marked with magenta dots, two--generated vertices are
marked with yellow dots.
The unit cube centered at the origin is denoted $\QQ\subset \R^3$. Vectors and points will be  written in columns.
Given $n\in \N$, the set $\{1,2,\ldots,n\}$ is denoted $[n]$. Let $\Sigma_n$ denote the
permutation groups in $n$ symbols.

\begin{figure}[ht]
\centering
\includegraphics[width=6cm]{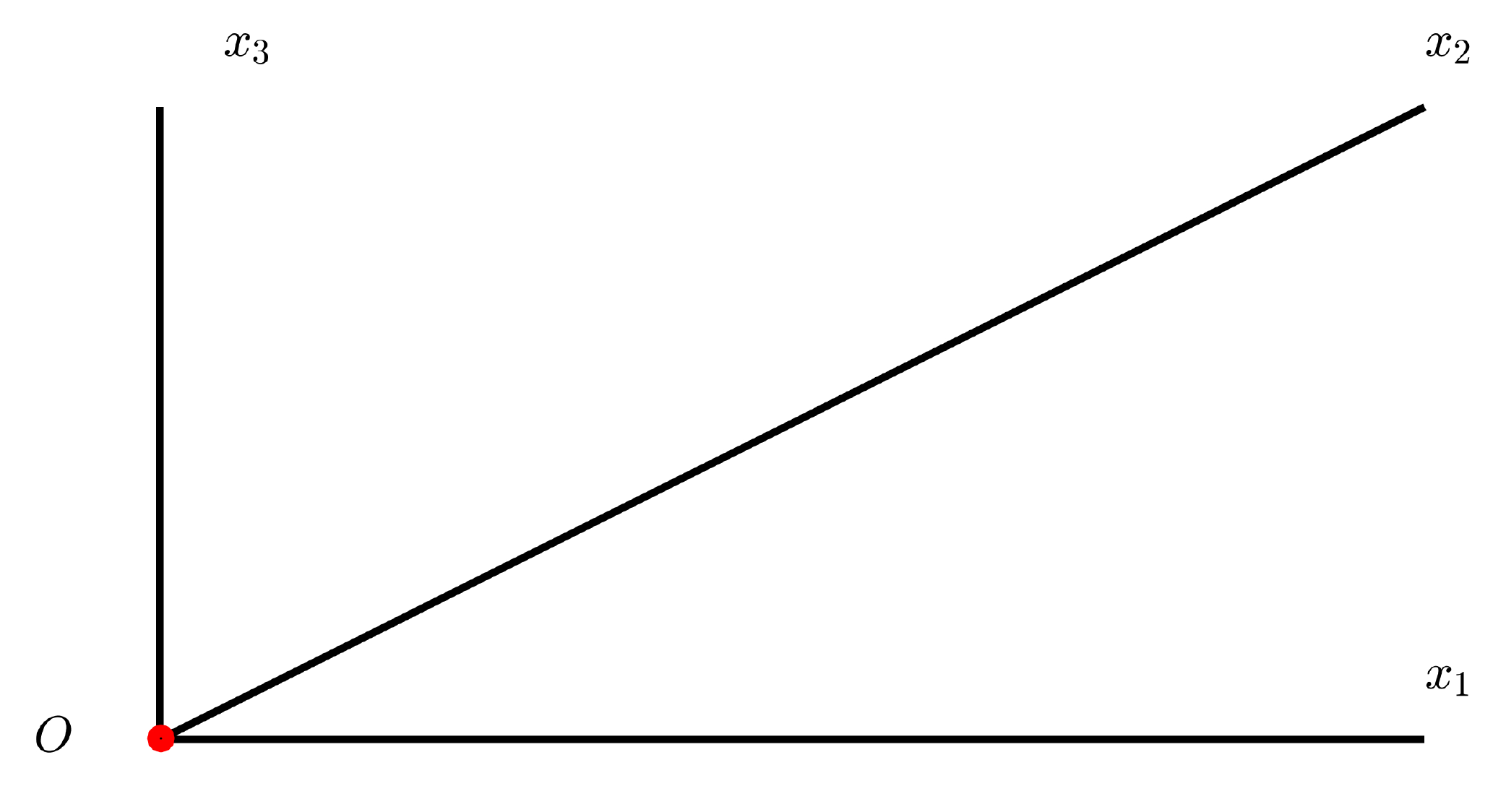}\qquad\includegraphics[keepaspectratio,width=4cm]{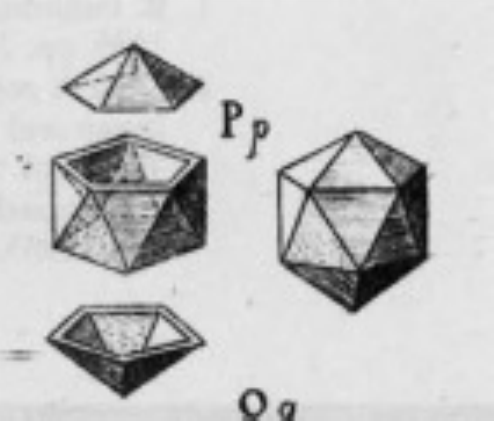}\\
\caption{Coordinate axes in $\R^3$;  the red dot marks the origin (left). North and South Casks and Equatorial Belt in an icosahedron in
figure from  Kepler's  \emph{Harmonices Mundi}, 1691 (right).}\label{fig_01_R3_Kepler}
\end{figure}

\section{Left and right pentagons, geographical terms and some combinatorial labels}\label{sec:labels}

\begin{dfn}[Left pentagon and right pentagon]\label{dfn:left_penta}
Consider a rectangle $F\subset \R^2$ with edges parallel to the coordinate axes  and  fix a vertex $\VV$ of $F$ such that $\VV=\max R$
or $\VV=\min R$, coordinatewise.
Cut off  a corner of $F$ not opposite to $\VV$ and obtain a pentagon $F'$.
We say that $F'$ is a \textbf{left  pentagon} if the removed corner is on the left hand side of $F$. Otherwise we say $F'$
is a \textbf{right  pentagon}.
\end{dfn}

Two distinguished vertices exist in  every alcoved polyhedron $\PP\subset \R^3$: the \textbf{North  and South Poles}. These are  $\NN=\max \PP$ and
$\SSSS=\min \PP$, where  maxima and minima are computed coordinatewise. The classical line  passing through $\NN$ and $\SSSS$ will be
called the \textbf{Polar Axis} of $\PP$. The largest point in $\PP$ is the North Pole. The smallest point in $\PP$ is the South Pole.

\begin{figure}[ht]
\centering
\includegraphics[width=5cm]{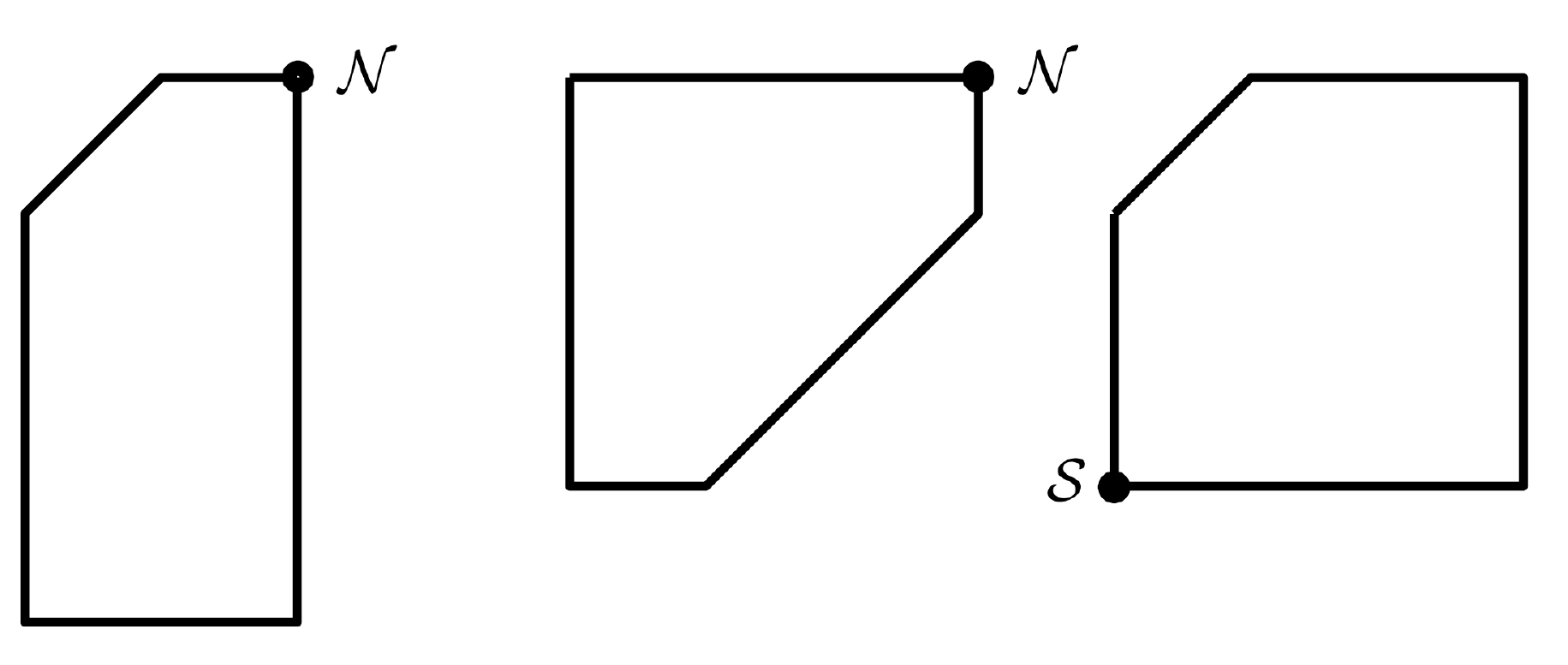}
\caption{A left pentagon at the North Pole is such that  the left corner of a rectangle  is missing (left).
A right pentagon at the North Pole: the right corner has been cut off (center). A left pentagon at the South Pole: the left corner has
been cut off (right).}
\label{fig_02_pentagon_left_and_right}
\end{figure}

In addition, we  use a \textbf{combinatorial labeling} \label{dfn:labels} of vertices, marked with underlined digits.
The single--digit vertices of $\PP$ are called \textbf{generators} (marked blue in figures); there are four of them:
$\underline{j}$, for $j\in [4]$.

The labels of the remaining
vertices of $\PP$ may have $2$ or  $3$ digits.
A vertex in $\PP$ may admit two or more labels and this  happens if and only
if $\PP$ is \textbf{non--maximal} (with respect to $f$--vector). \label{com:maximal_2} More combinatorial labels are introduced in p.~\pageref{dfn:more_labels}.

\section{Boxes and Cubes}\label{sec:box_and_cube}
 We identify $\R^{n-1}$ with the hyperplane $\{x_n=0\}$ in $\R^n$.

\begin{dfn}\label{dfn:box}
A \textbf{box} in $\R^{n-1}$ is a polytope $\BB$
whose facets are contained in hyperplanes parallel to the
coordinate axes.
A \textbf{cube} is a box of equal edge--lengths.
\end{dfn}

\begin{figure}[ht]
 \centering
  \includegraphics[width=12cm]{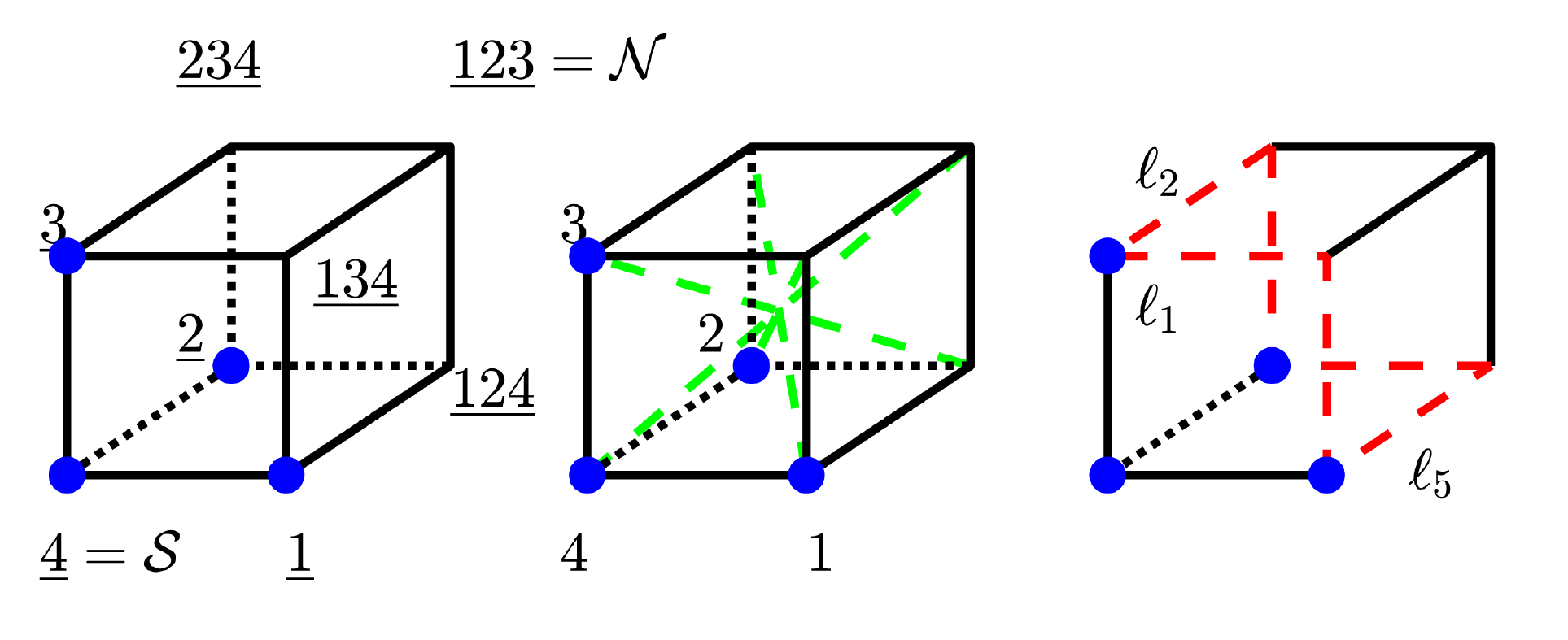}\\
  \caption{In $\R^3$, labeled  vertices of a cube, using underlined single--digit or underlined three--digit labels (left).
  Labeled diagonals of a cube (diagonals marked in green dashed lines) (center).
  Cantable edges $\sucen{\ell}6$ in  a cube (right). They form a cycle, beginning with $\ell_1$ in front top, $\ell_2$ top left, $\ell_3$  left back, $\ell_4$ back bottom, $\ell_5$  bottom right and  $\ell_6$ right front.
   Cantable edges marked in red dashed lines.
   }\label{fig_03_three_cubes}
\end{figure}

\begin{dfn}[Labels in a box]\label{dfn:labels_box}
\begin{enumerate}
\item The vertices of a 3--dimensional box are labeled $\underline{j}$, with $j\in[4]$ and $\underline{123},
\underline{124}, \underline{134}, \underline{234}$.
In particular, $\underline{123}$ is the  \textbf{North Pole} and $\underline{4}$ is the  \textbf{South Pole}.
\item We label the diagonals of boxes  with plain (not underlined) digits as follows: the diagonal $j$
contains the generator $\underline{j}$, for $j\in[4]$ (see
figure \ref{fig_03_three_cubes}, center). In particular, the label of the \textbf{Polar Axis} is 4. \label{item:Polar_Axis}
\end{enumerate}
\end{dfn}
For $j\in [3]$ consider the facet $F_j$  of a box all whose vertices contain $j$ in the label.
Then, it is easy to see that $\underline{j}$ is the smallest point of $F_j$.
\begin{nota}\label{nota:box_matrix}
Given a vector $t=(t_1,t_2,t_3)^T\in\R^3_{\le0}$, consider the
following matrix
\begin{equation}\label{eqn:box_matrix_1}
B(t):=\left(\begin{array}{rrrr}0&t_1&t_1&t_1\\
t_2&0&t_2&t_2\\
 t_3&t_3&0&t_3\end{array}\right).
\end{equation}
Since $\R^3$ is identified with
$\{x_4=0\}$ in $\R^4$,
then  the matrix $B(t)$ is identified with the matrix
\begin{equation}\label{eqn:box_matrix_2}
\left(\begin{array}{rrrr}0&t_1&t_1&t_1\\
t_2&0&t_2&t_2\\
 t_3&t_3&0&t_3\\0&0&0&0\end{array}\right).
\end{equation}
It is easy to check that
the columns of $B(t)$ are the generators of a box
$\BB\subset \R^3$ whose North Pole is the origin and whose edge--lengths are $|t_1|,|t_2|,|t_3|$.
We say that the matrix $B(t)$ represents the box $\BB$.
\end{nota}

\section{Alcoved convex polyhedra and their matrices}\label{sec:alcoved_poly_and_mat}
In the previous section we have seen that certain  matrices represent boxes. Here we will see matrices which represent alcoved polyhedra.
Let $x_1,\ldots,x_{n-1}$ be coordinates in $\R^{n-1}$.
\begin{dfn}[Alcoved]\label{dfn:alcoved_convex}
A convex polytope $\PP$  in $\R^{n-1}$ is \textbf{alcoved} if the facets of $\PP$
are contained in  hyperplanes of equations $x_i=cnst$ or $x_i-x_j=cnst$,  for $i,j\in[n-1]$.
\end{dfn}
In particular, a box is a \textbf{non--maximal} (with respect to $f$--vector)
alcoved polytope.\label{com:non_maximal_2}

\begin{dfn}[Cornered alcoved polytope]\label{dfn:cornered_p}
An alcoved polytope   $\PP\subset \R^{n-1}$ is \textbf{cornered}
if  the North Pole $\NN$ is at the origin. In particular, $\PP$ is contained in the closed non--positive orthant of $\R^{n-1}$.
\end{dfn}

A way to assemble  all the constant terms of the linear equations describing  an alcoved polyhedron $\PP$  is to arrange them into a
\textbf{matrix} $A=(a_{ij})$  as follows:
\begin{eqnarray}
a_{nk} \le x_k\le -a_{kn},\quad  a_{ij}\le x_i-x_j\le -a_{ji}, \quad i,j,k\in [n-1],\\\label{eqn:alcoved_2}
a_{ii}=0, \qquad i\in[n].\label{eqn:alcoved_3}
\end{eqnarray}

Let $M_n(\R_{\le0})$ denote the set of square non--positive real matrices of order $n$.

\begin{dfn}[Normal idempotent matrix]\label{dfn:normal}
Consider $A=(a_{ij})\in M_n(\R_{\le0})$.

\begin{enumerate}
\item The matrix $A$ is called \textbf{normal} if
$a_{ii}=0$,  all $i\in [n]$, i.e., $A$ is is zero--diagonal.\label{item:N}
\item The normal matrix $A$ is called \textbf{tropically                                                                     idempotent} if $a_{ik}+a_{kj}\le a_{ij}\le0$, all $i,j,k\in[n]$.\label{item:NI}
\end{enumerate}
\end{dfn}
\begin{rem}
Let us use \textbf{tropical algebra} over the extended real numbers $\R\cup\{-\infty\}$ to operate with numbers and $n$--dimensional  vectors.  The tropical operations
are $\oplus:=\max$  (tropical addition) and  $\odot:=+$ (tropical multiplication), coordinatewise. The element $-\infty$ is  neutral
for tropical addition.\footnote{We will use the alien element $-\infty$ as little as possible.}
For example: $(2,3)\oplus(-1,5)=(2,5)$, $(-4)\odot(2,-\infty)=(-2,-\infty)$ and  $(2,3)\odot(-1,5)^T=\max\{1,8\}=8$.
Now extend these tropical operations to matrices, following the  usual way.
In this context, it is well--known that any normal matrix satisfies $A\le A\odot A$. Therefore, a \textbf{normal matrix} $A$
is \textbf{idempotent}
if  and only
if  $A\odot A\le A$, which (since tropical addition means computing a maximum) is expressed  as $a_{ik}+a_{kj}\le a_{ij}$, as shown in
item \ref{item:NI} above.

Tropical addition  $\oplus=\max$ is \textbf{idempotent}, since $a\oplus a=a$, all $a$. Given $a\in \R$, no opposite element exists for $a$ with the operation $\oplus$.

\end{rem}

\begin{rem}
We also use classical addition of matrices, when we talk about perturbations.
\end{rem}

\begin{dfn}
\begin{enumerate}
\item The \textbf{tropical
identity matrix} is the square matrix $I=(i_{kl})$ with $i_{kk}=0$, $i_{kl}=-\infty$, otherwise.
\item For $k,l\in [n]$, the \textbf{tropical
permutation matrix} is the square matrix $P^{(kl)}$   obtained from  $I$ by permuting the $k$--th and $l$--th rows.
\end{enumerate}
\end{dfn}

Tropically, the only \textbf{invertible} matrices are the diagonal ones: $D=\diag(b)$, with $b\in \R^n$ and
$-\infty$ outside the main diagonal. Indeed,  $D^{-1}=\diag(-b)$ is the topical inverse. The \textbf{conjugate} of a matrix $A$ by $D$ is
\begin{equation}\label{eqn:conj}
\leftidx{^D}{A}:=D\odot A\odot D^{-1}.
\end{equation}
\begin{nota}
The notation $\leftidx{^{P^{(kl)}}}A$ is simplified to $\leftidx{^{(kl)}}A$.
\end{nota}

Since any matrix can be normalized (however, not uniquely),
working with normal matrices may be advantageous,  and it is   no serious restriction. We will do so in the sequel.

\begin{rem}[CORE BACKGROUND]
The following are well--known facts.
If $A$ is a real matrix, then the set
\begin{eqnarray}\label{eqn:span}
\tconv(A):=\{x=\lambda_1\odot\col(A,1)\oplus \cdots\oplus\lambda_n\odot\col(A,n): \lambda_1,\ldots,\lambda_n\in \R\}\\
=\{x=A\odot \lambda: \lambda\in \R^n\}
\end{eqnarray}
of all tropical linear combinations of
the columns of $A$ is a \textbf{polytopal complex} in $\R^n$ of impure dimension  (see \cite{Develin_Sturm,Richter}).
We have $x=(x_i)$ with
\begin{equation}
x_i=\max\limits_{k\in[n]}\lambda_k+a_{ik}, \qquad i\in[n].
\end{equation}
Further, \textbf{if $A$ is normal idempotent then  $\tconv(A)$ is
much more than a complex: it is just one alcoved polytope}
(see \cite{Butkovic_Sch_Ser,Jimenez_Puente,Puente_kleene,Sergeev,Sergeev_Sch_But}). This is a \textbf{key fact}.
In such a case, the intersection of $\tconv(A)$ with the classical hyperplane $\{x_n=0\}$ is also  an \textbf{alcoved polytope} and
its dimension is one unit smaller.
It is denoted $\PP(A)$ in these notes:
\begin{equation}\label{eqn:span2}
\PP(A):=\{x\in\R^{n}:x_n=0\}\cap \tconv(A).\footnote{Instead of $\tconv(A)$, we could use as
well $\tcone(A)$, the definition being analogous to $\tconv(A)$, except that $\lambda_j$ runs in $\R\cup\{-\infty\}$. The set
$\tcone(A)$ is a tropical cone, and the difference between both sets is just the point $(-\infty,\ldots,\-\infty)^T$.
We get $\PP(A)=\{x\in\R^{n}:x_n=0\}\cap\tcone(A)$.}
\end{equation}
Conversely, $\PP(A)$   determines $\tconv(A)$, because $\tconv(A)$ is closed under classical addition  of the
constant vector $r(1,\dots,1)^T$ (i.e., closed
under tropical  scalar multiplication by $r$),   for all $r\in \R$.

In order  to \textbf{sketch} the polytope  $\PP(A)$ in $\R^{n-1}=\{x_n=0\}$,
we must compute  the following auxiliary matrix (called \textbf{geometric matrix})
\begin{equation}\label{eqn:geometric}
A_0=(\alpha_{ij}),\qquad \alpha_{ij}:=a_{ij}-a_{nj}.
\end{equation}
In particular, the last row in $A_0$ is zero.
Since each column of $A_0$ is a tropical scalar multiple of the corresponding column in $A$, we have  $\tconv(A)=\tconv(A_0)$.
The columns of $A_0$ belong to $\R^{n-1}$  and
\begin{equation}\label{eqn:equality_of_polys}
\PP(A)=\PP(A_0).
\end{equation}
If the matrix $A$ is normal, then both $\tconv(A)$  and  $\PP(A)$ contain the origin.\footnote{Indeed, for each $i\in[n]$, take $\lambda_i=0$, $\lambda_j<0$ and use $a_{ii}=0$ and $a_{ij}\le0$, to obtain $x_i=\max\limits_{k\in[n]}\{\lambda_k+a_{ik}\}=0$.}

A \textbf{converse holds}  (see \cite{Puente_kleene,Sergeev}): if $\PP\subset \R^{n-1}$ is an alcoved polytope containing the origin, then
there exists a normal idempotent matrix $A\in M_n^{NI}$ such that
\begin{equation}\label{eqn:equality}
\PP=\PP(A).
\end{equation}
Moreover, if the constant coefficients come from a normal idempotent matrix, then the inequalities  are tight (see
\cite{Afriat,Puente_kleene}).
\end{rem}

$M^{NI}_n$ is the set of
\textbf{normal idempotent} matrices of order $n$.
In view of (\ref{eqn:equality}), we will always work in  $M_n^{NI}$.

\begin{dfn}\label{dfn:visualized_m}
A normal matrix  $A$ is \textbf{visualized} if $A=A_0$. Equivalently, $a_{ii}=a_{ni}=0$ and $a_{ij}\le0$, all $i,j\in[n]$.
\end{dfn}

\begin{rem}
The union of the regions \label{rem:regions}
$$R_j:=\{x\in \mathbb{R}^{n-1}: 0\le x_j \text{\ and \ } x_k\le x_j,\ k\in[n-1]\},\quad j\in[n-1],$$
$$R_n:=\{x\in \mathbb{R}^{n-1}: x_k\le 0,\ k\in[n-1]\}$$ is the whole $\R^{n-1}$.
\end{rem}

We are dealing with   zero--diagonal matrices  $A$ as well as matrices with zero last row $A_0$, which are  are related
by (\ref{eqn:geometric}) and (\ref{eqn:equality_of_polys}).
The geometric matrix $A_0$ is used to graph the polytope of $\PP(A)\subset \R^{n-1}$.
The following is a characterization of a normal matrix and of a visualized matrix $A$ in terms of the location of the columns of
the  geometric matrix $A_0$.\footnote{It implies that, for a NI matrix,  vertex  labels in the North Cask of $\PP$ follow the same cyclic
sequence for every polyhedron, when  going around $\NN$. This will be used in p.~\pageref{com:geom_interpret}.}

\begin{lem}[Location of columns] \label{lem:geom_interpret}
For a zero--diagonal matrix $A$, the following hold:
\begin{enumerate}
\item $A$ is normal if and only if $\col(A_0,j)\in R_j$, all $j\in[n]$,
\item $A$ is visualized  if and only if $\col(A_0,j)\in R_j\cap R_n$, all $j\in[n]$.
\end{enumerate}
\end{lem}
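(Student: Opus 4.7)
The plan is to reduce every geometric condition to a linear inequality in the entries of $A$ via the formula $\alpha_{ij}=a_{ij}-a_{nj}$, and then verify that the resulting system coincides exactly with the defining conditions (normal in part (1), visualized in part (2)). Since $A$ is assumed zero-diagonal, two specializations of this formula do all the work: $\alpha_{jj}=-a_{nj}$ for $j\in[n]$, and $\alpha_{nj}=0$ for every $j$, the latter confirming that the last row of $A_0$ vanishes so that $\col(A_0,j)$ may be viewed as a vector in $\R^{n-1}$ and compared against $R_j$ and $R_n$.

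For part (1), I would unfold $\col(A_0,j)\in R_j$ column by column. When $j\in[n-1]$, the defining inequality $0\le\alpha_{jj}$ of $R_j$ becomes $a_{nj}\le0$, and the inequality $\alpha_{kj}\le\alpha_{jj}$ for $k\in[n-1]$ becomes $a_{kj}-a_{nj}\le -a_{nj}$, i.e.\ $a_{kj}\le0$. When $j=n$, the condition $\col(A_0,n)\in R_n$ becomes $\alpha_{kn}=a_{kn}\le0$ for $k\in[n-1]$. Collecting these over all $j$ yields $a_{ij}\le0$ for every $i,j\in[n]$, which together with the zero-diagonal hypothesis is exactly the definition of a normal matrix. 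The converse is obtained by running the same computation backwards.

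For part (2), the extra requirement $\col(A_0,j)\in R_n$ for $j\in[n-1]$ asks $\alpha_{kj}\le0$ for every $k\in[n-1]$. Instantiating this at $k=j$ gives $\alpha_{jj}\le0$, which combined with the $R_j$-inequality $\alpha_{jj}\ge0$ pinches $\alpha_{jj}=0$, equivalently $a_{nj}=0$. So the whole last row of $A$ vanishes (the entry $a_{nn}=0$ is free by the zero-diagonal hypothesis), supplementing the inequalities from (1) with the equalities $a_{ni}=0$. This is precisely the definition of a visualized matrix. Conversely, if $A=A_0$ is visualized, then $\alpha_{jj}=a_{jj}=0$ and $\alpha_{kj}=a_{kj}\le0$ for all $k$, so each column trivially lies in $R_j\cap R_n$.

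The proof is essentially bookkeeping, and the only non-routine observation is the pinch in part (2): the intersection $R_j\cap R_n$ forces $\alpha_{jj}=0$ (not merely $\alpha_{jj}\ge0$), which is what yields the extra equalities $a_{nj}=0$ distinguishing visualization from normality. No further obstacle is anticipated.
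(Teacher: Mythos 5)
Your proof is correct and follows essentially the same route as the paper's: both unfold the coordinates $\alpha_{ij}=a_{ij}-a_{nj}$ of the columns of $A_0$ and match the resulting inequalities against the defining conditions of $R_j$ and $R_n$, using the zero--diagonal hypothesis to get $\alpha_{jj}=-a_{nj}$. The only difference is that you spell out the converse directions (in particular the pinch $\alpha_{jj}=0$ forcing $a_{nj}=0$ in part (2)), which the paper dismisses as easy.
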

\begin{proof}
\begin{enumerate}
\item $(\Rightarrow)$ Fix $j\in [n]$. The $i$--th coordinate of $\col(A_0,j)$ is $x_i=\alpha_{ij}=a_{ij}-a_{nj}$ and $j$--th
coordinate is $x_j=a_{jj}-a_{nj}$. We have $x_i\le x_j$ and $0\le x_j$ because $a_{jj}=0$ and the entries of $A$ are
non--positive. Thus $\col(A_0,j)\in R_j$. The converse is easy.
\item $(\Rightarrow)$ By the item above, we have $\col(A_0,j)\in R_j$, all $j\in[n]$. The $i$--th coordinate of
$\col(A_0,j)=\col(A,j)$ is $x_i=a_{ij}\le0$, whence $\col(A_0,j)\in R_n$, all $j\in[n]$.  The converse is easy.
\end{enumerate}
\end{proof}

The set of \textbf{visualized} matrices of order $n$ is a real cone of dimension $(n-1)^2$.
The subset of \textbf{visualized idempotent} matrices in denoted $M^{VI}_n$ and we have  $M_n^{VI}\subset M_n^{NI}$.

Any normal idempotent  matrix can be visualized, as the
following  lemma  shows.

\begin{lem}[Visualizing a normal idempotent matrix] \label{lem:visualizing}
Given $A\in M_n^{NI}$, the matrix $\leftidx{^D}{A}$ is visualized idempotent, where
$D=\diag(\row(A,n))$.
\end{lem}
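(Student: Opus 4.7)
The plan is to compute the entries of $\leftidx{^D}{A}$ explicitly by carrying out the tropical conjugation, and then read off the three conditions characterising a visualized matrix in Definition \ref{dfn:visualized_m}, together with the preservation of idempotency under conjugation.

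First I would write $D=\diag(a_{n1},\ldots,a_{nn})$, whose tropical inverse is $D^{-1}=\diag(-a_{n1},\ldots,-a_{nn})$. A direct two--step tropical multiplication gives
\begin{equation}
(\leftidx{^D}{A})_{ij} = a_{ni} + a_{ij} - a_{nj}, \qquad i,j\in[n].
\end{equation}
From this single formula, all three defining conditions of a visualized matrix fall out of the properties of $A\in M_n^{NI}$: setting $j=i$ we get $a_{ni}+a_{ii}-a_{ni}=0$ since $a_{ii}=0$ by normality, so the diagonal of $\leftidx{^D}{A}$ is zero; setting $i=n$ we get $a_{nn}+a_{nj}-a_{nj}=0$, again by normality, so the last row is zero; and the non--positivity condition $(\leftidx{^D}{A})_{ij}\le 0$ is precisely the tropical idempotency inequality $a_{ni}+a_{ij}\le a_{nj}$ of Definition \ref{dfn:normal}, applied with outer indices $(n,j)$ and intermediate index $i$.

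It remains to verify that $\leftidx{^D}{A}$ is tropically idempotent. This is a general fact about conjugation by a tropically invertible matrix: using $D^{-1}\odot D=I$ together with the equality $A\odot A=A$ (the inequality $A\le A\odot A$ is automatic for any normal matrix, and the reverse inequality is the idempotency hypothesis), one has
\begin{equation}
\leftidx{^D}{A}\odot \leftidx{^D}{A} = D\odot A\odot D^{-1}\odot D\odot A\odot D^{-1} = D\odot A\odot D^{-1} = \leftidx{^D}{A}.
\end{equation}

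I do not expect a genuine obstacle in this verification; the only conceptual point worth highlighting is that the defining tropical--idempotency inequalities of $A$, when specialised to the $n$--th row, are exactly what is required to force the conjugate's entries to be non--positive. Thus visualization is not an additional assumption but an automatic consequence of the natural normalisation choice $D=\diag(\row(A,n))$, which is precisely the content of the lemma.
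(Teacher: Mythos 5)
Your proof is correct and follows essentially the same route as the paper: compute $(\leftidx{^D}{A})_{ij}=a_{ni}+a_{ij}-a_{nj}$, read off the zero diagonal and zero last row from $a_{ii}=a_{nn}=0$, obtain non--positivity of the entries, and observe that idempotency is preserved under tropical conjugation. If anything, you are slightly more precise than the paper on the non--positivity step, correctly tracing it to the idempotency inequality $a_{ni}+a_{ij}\le a_{nj}$ (outer indices $n,j$, middle index $i$) rather than to normality alone.
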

\begin{proof}
If $A$ is idempotent, then   so is $D\odot A\odot D^{-1}$.
The diagonal of $\leftidx{^D}{A}$ is  null, because the diagonal of $A$ is.
The  $(i,j)$--th entry  is
\begin{equation}\label{eqn:ij_visualization}
(\leftidx{^D}{A})_{ij}:=a_{ni}+a_{ij}-a_{nj}
\end{equation} and it is non--positive, by normality of $A$. This
shows normality of $\leftidx{^D}{A}$. The  $(n,j)$--th entry is $(\leftidx{^D}{A})_{nj}=a_{nn}+a_{nj}-a_{nj}=0$, since $a_{nn}=0$.
This proves that $\leftidx{^D}{A}$ is visualized.
\end{proof}
Visualization is a particular case of conjugation.
In  \cite{Sergeev_Sch_But} this process is called  \textbf{visualization scaling}.

\begin{rem}
The visualization of a normal matrix may not be normal. For example,
$B=\left( \begin{array}{rrr}
0&0&0\\-1&0&0\\-1&-2&0\\
\end{array}\right)
$,
$B\odot B=\left(\begin{array}{rrr}
0&0&0\\-1&0&0\\-1&-1&0\\
\end{array} \right)
$,
$B_0=\left( \begin{array}{rrr}
1&2&0\\0&2&0\\0&0&0\\
\end{array}\right)
$. The matrix
$\leftidx{^D}B=\left(\begin{array}{rrr}
0&1&-1\\-2&0&-2\\0&0&0\\
\end{array} \right)
$ is not normal.
\end{rem}

The following statements are  easy to prove.
For every alcoved polytope $\PP\subset \R^{n-1}$, there exists a unique vector $v\in \R^{n-1}$ ($v$ depending on $\PP$) such
that the  translated polytope $t_v(\PP)$ is cornered.
If $A\in M^{VI}_n$, then $\PP(A)$ is  cornered. The converse holds.

\begin{cor}[Unique representation]\label{cor:unique_rep}
For every cornered alcoved polytope   $\PP\subset \R^{n-1}$ there existe a unique matrix $A\in M^{VI}_n$ such that  $\PP=\PP(A)$.\qed
\end{cor}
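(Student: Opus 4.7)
The plan is to split the statement into existence and uniqueness, each exploiting a distinct feature of a VI matrix. For existence I combine the column--location lemma with the non--positive orthant constraint coming from cornering; for uniqueness I show that every entry of a VI representative is the coordinate--wise minimum of a particular face of $\PP$, a datum determined by $\PP$ alone.

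For existence I first invoke (\ref{eqn:equality}) to produce some $A\in M_n^{NI}$ with $\PP(A)=\PP$, and then argue that $A$ must already be visualized. By Lemma~\ref{lem:geom_interpret}(1), $\col(A_0,j)\in R_j$, so its $j$--th coordinate is non--negative. On the other hand $\col(A_0,j)$ lies in $\PP(A_0)=\PP$, and $\PP$ is contained in the closed non--positive orthant because it is cornered (Definition~\ref{dfn:cornered_p}). These two facts force $\alpha_{jj}=0$, equivalently $a_{nj}=0$, for every $j\in[n]$. Combined with $a_{ii}=0$ and $a_{ij}\le 0$ from normality, this gives $A=A_0$, so $A\in M_n^{VI}$.

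For uniqueness, suppose $A,A'\in M_n^{VI}$ both satisfy $\PP(A)=\PP(A')=\PP$. I will prove the formula
\[
a_{ij} \;=\; \min\{\,x_i : x\in \PP,\ x_j=0\,\},
\]
with the convention $\PP\cap\{x_n=0\}=\PP$; this reads each entry off from $\PP$ alone. The $(\le)$ direction is immediate, since $\col(A,j)\in\PP$ has $j$--th coordinate $a_{jj}=0$ and $i$--th coordinate $a_{ij}$. For the $(\ge)$ direction, write a point of $\PP$ with $x_j=0$ as $x=A\odot\lambda$ with $\max_k\lambda_k=0$, so $\lambda_k\le 0$ throughout. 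The equality $\max_k(a_{jk}+\lambda_k)=0$, together with $a_{jk}\le 0$ and $\lambda_k\le 0$, forces some index $k^{\ast}$ to satisfy $a_{jk^{\ast}}=0$ and $\lambda_{k^{\ast}}=0$ simultaneously. Then
\[
x_i \;\ge\; a_{ik^{\ast}}+\lambda_{k^{\ast}} \;=\; a_{ik^{\ast}} \;\ge\; a_{ij},
\]
where the final inequality is the idempotency relation $a_{ij}+a_{jk^{\ast}}\le a_{ik^{\ast}}$ evaluated at $a_{jk^{\ast}}=0$. Applying the formula to both $A$ and $A'$ gives $A=A'$.

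The delicate step is the $(\ge)$ half of the uniqueness formula, where one must extract the correct index $k^{\ast}$ from the sign constraints on the entries and on $\lambda$, and then apply idempotency at the right triple $(i,j,k^{\ast})$; all the remaining ingredients are direct consequences of facts already established in the excerpt, so the argument should go through cleanly.
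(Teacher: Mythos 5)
Your proposal is correct, and it is worth noting that the paper itself gives no written argument: the corollary appears with a \qed as an immediate consequence of facts stated just beforehand (the existence of a normal idempotent representative for an alcoved polytope containing the origin, the assertion that $\PP(A)$ is cornered iff $A\in M_n^{VI}$, and the tightness of the defining inequalities, delegated to the references of Afriat and the Kleene--star paper). What you do differently is to actually prove those ingredients in the form needed. For existence you show that any $A\in M_n^{NI}$ with $\PP(A)$ cornered automatically has zero last row, via Lemma \ref{lem:geom_interpret} and the fact that the generators $\col(A_0,j)$ are points of $\PP$ inside the non--positive orthant; this is precisely the unproved ``converse'' the paper invokes. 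For uniqueness you derive the recovery formula $a_{ij}=\min\{x_i: x\in\PP,\ x_j=0\}$ directly from idempotency, which serves as a self--contained substitute for the cited tightness result. Both halves check out; the only step you assert without justification is that a point $x\in\PP$ may be written $x=A\odot\lambda$ with $\max_k\lambda_k=0$, but this is automatic rather than a choice: since $A$ is visualized its last row vanishes, so $\max_k\lambda_k=\max_k(a_{nk}+\lambda_k)=x_n=0$ for any representation. In short, your route buys a complete proof within the paper's own toolkit, at the cost of re--deriving material the paper treats as known.
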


\begin{dfn}[More labels]\label{dfn:more_labels}
Let $A\in M_n^{NI}$ be given and consider $\PP=\PP(A)$. We know that the columns of $A_0$ represent some vertices of $\PP$. The vertex
represented by $\col(A_0,j)$ will be denoted $\underline{j}$, for $j\in[n]$. Vertices of $\PP$ represented by columns of $A_0$ are
called \textbf{generators} of $\PP$. The columns of $(A^T)_0$ represent vertices of $\PP$ (see \cite{Puente_linear}). The vertex of
$\PP$ represented by $\col((A^T)_0,j)$ is  denoted $\underline{12\ldots j-1\; j+1\ldots n}$\footnote{The order of digits is unimportant.}. Vertices of $\PP$ represented by columns of $A_0$ or
$(A^T)_0$ are
called \textbf{principal}. If $\PP$ is maximal, then the number of vertices of $\PP$ is ${{2n-2}\choose {n-1}}$
(see \cite{Develin_Sturm}), so that some vertices of $\PP$ are \textbf{non--principal}. \footnote{Recall that $\underline{123}=\NN$  and  $\underline{4}=\SSSS$, i.e.,  the South Pole is a generator in $\PP$.}

For $n=4$,  let us label the \textbf{non--principal vertices}. Consider $i\neq j\in [4]$.
The shortest edge--path joining   generators $\underline{i}$  and $\underline{j}$ contains three edges and the vertex closest to
$\underline{i}$ is labeled $\underline{ij}$\footnote{The order of digits is important.}, \label{dfn:comb_labels} the vertex
closest  to
$\underline{j}$ is labeled $\underline{ji}$ (see \cite{Jimenez_Puente}). So, non--principal vertices are
$\underline{12}$, $\underline{21}$, $\underline{13}$, $\underline{31}$,\ldots,$\underline{14}$, $\underline{41}$.\footnote{If $n>4$, we
do not have a general rule to label the  non--principal vertices of $\PP$.}
\end{dfn}

\subsection{The 2--minors of a matrix}\label{subsec:2_min}

In this subsection we introduce the 2--minors  of the NI matrix $A$, some of which turn out to be edge--lengths of $\PP(A)$.
\begin{dfn}\label{dfn:2_minors}
\begin{enumerate}
\item The \textbf{difference} of matrix $\begin{pmatrix}
    a&b\\c&d
    \end{pmatrix}$ is $a+d-b-c$. \footnote{This is related to, but different from,  the tropical determinant (also called tropical permanent).}
\item The \textbf{2--minors of a matrix} $A=(a_{ij})$ are  the differences of the  order 2 submatrices  of $A$.
If  $i,j,k,l\in [n]$, $i\le j$ and $k\le l$, we write
\begin{equation}\label{eqn:2_minors_1}
a_{ij;kl}:=a_{ik}+a_{jl}-a_{il}-a_{jk}, \quad \text{and}
\end{equation}
\begin{equation}\label{eqn:2_minors_2}
 a_{ij;kl}=-a_{ji;kl}=-a_{ij;lk}=a_{ji;lk}.
 \end{equation}
 \end{enumerate}
\end{dfn}
The 2--minors satisfy some of the usual properties of classical determinants, such as  $a_{ij;kl}=0$, if $i=j$ or $k=l$.
Besides, \textbf{cocycle relations} hold
\begin{equation}\label{eqn:cocycle}
a_{ij;kl}=a_{im;kl}+a_{mj;kl}=a_{ij;km}+a_{ij;ml}.
\end{equation}
From (\ref{eqn:ij_visualization}), we get
\begin{equation}\label{eqn:entries_of_visual}
 (\leftidx{^D}A)_{ij}=-a_{in;ij}
 \end{equation} since $a_{ii}=0$, so the entries of the visualization of $A$ are certain 2--minors of $A$.

The following lemma is crucial: it is used in p.~\pageref{use:lem:2_minors}, \pageref{use:lem:2_minors_2} and \pageref{use:lem:2_minors_3}.
\begin{lem}[Invariance]\label{lem:2_minors}
The 2--minors  of a normal idempotent matrix are invariant under visualization.
\end{lem}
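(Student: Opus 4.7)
The plan is to prove this by direct computation, exploiting the fact that visualization is a particular kind of diagonal conjugation, and that \emph{any} diagonal conjugation preserves 2--minors.

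First I would recall, from Lemma~\ref{lem:visualizing} and equation (\ref{eqn:ij_visualization}), that the visualized matrix $A' = \leftidx{^D}{A}$ has entries
\begin{equation*}
a'_{ij} = a_{ni} + a_{ij} - a_{nj},
\end{equation*}
where $D = \diag(\row(A,n))$. More generally, for any diagonal matrix $D = \diag(d_1,\ldots,d_n)$ with $d_k \in \R$, the conjugate $A' = D \odot A \odot D^{-1}$ has entries $a'_{ij} = d_i + a_{ij} - d_j$. I would state and prove the stronger claim that these classical corrections of the form $d_i - d_j$ cancel when one forms any 2--minor, so 2--minors are invariant under arbitrary diagonal conjugation, not just under the specific scaling used in visualization.

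The key step is the explicit cancellation. Using $a'_{ij} = d_i + a_{ij} - d_j$, I would compute
\begin{equation*}
a'_{ij;kl} = a'_{ik} + a'_{jl} - a'_{il} - a'_{jk}
\end{equation*}
and substitute to get
\begin{equation*}
a'_{ij;kl} = (d_i + a_{ik} - d_k) + (d_j + a_{jl} - d_l) - (d_i + a_{il} - d_l) - (d_j + a_{jk} - d_k).
\end{equation*}
The four $d_i, d_j, d_k, d_l$ terms cancel in pairs, leaving exactly $a_{ik} + a_{jl} - a_{il} - a_{jk} = a_{ij;kl}$.

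Applying this to the specific choice $d_k = a_{nk}$ finishes the lemma. I do not expect any real obstacle: the proof is a one--line calculation, and hypotheses like normality or idempotency are never used (they are only present because the statement is made in the natural setting $M^{NI}_n$ where visualization is defined). If one wanted to make the exposition slicker, one could observe that the 2--minor of a rank--one correction $a_{ij} \mapsto a_{ij} + d_i - d_j$ vanishes, which is essentially the same computation.
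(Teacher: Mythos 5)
Your proof is correct and is essentially the paper's own argument: the paper's proof simply says to substitute the visualized entries $a_{ni}+a_{ij}-a_{nj}$ from (\ref{eqn:ij_visualization}) into the definition of the 2--minor and verify the cancellations, which is exactly your computation. Your remark that the cancellation works for any diagonal conjugation (so normality and idempotency are not needed) is a harmless, slightly more general packaging of the same one--line calculation.
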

\begin{proof}
Apply definition  (\ref{eqn:2_minors_1})  and (\ref{eqn:2_minors_2}) to the entries (\ref{eqn:ij_visualization}) and verify cancelations.
\end{proof}

\section{From boxes to alcoved polyhedra}\label{sec:from_box_to_poly}

In English, the regular verb \textbf{to cant} means  to bevel, to form an oblique surface upon something. We will
cant some edges in  a box, making two dihedral angles of $3\pi/4$ radians (i.e., $135^\circ$). The result will be  an alcoved polyhedron.
Not every edge can be canted: only those edges not meeting the Poles can. Canting an edge depends on a parameter. The
expressions  \textbf{canted box}
and  \textbf{perturbed box} are synonyms.

\begin{dfn}[Perturbation of a VI box matrix]\label{dfn:perturbation}
Given $A=(a_{ij})\in M_4^{VI}$, we write
\begin{equation}\label{eqn:box_and_pert}
B:=B(a_{14},a_{24},a_{34}) \text{\ as \ in \ (\ref{eqn:box_matrix_1})\ and \  (\ref{eqn:box_matrix_2})\  and \ } E:=B-A.
\end{equation}
 We say that  $\PP(B)$ is the \textbf{bounding box} of $\PP(A)$,
that  $B$ is the \textbf{bounding box matrix} of $A$.
We also say that  $E$ is the  \textbf{perturbation matrix} of $A$.
\end{dfn}

The box matrix $B$ is determined by the last column of the given matrix $A$. The perturbation matrix
$E=(e_{ij})$  satisfies
\begin{equation}\label{eqn:eij}
e_{i4}=e_{4i}=e_{ii}=0 \text{\ and \ } e_{ij}=a_{i4}-a_{ij}\le 0, \text{\ all \ } i,j\in[4].
\end{equation}
The above inequalities $e_{ij}\le0$ follow
from the hypothesis that $A$ is visualized idempotent. The matrix $B$ is visualized idempotent but $E$ is not, in general. We have
\begin{equation}\label{eqn:eij_as_2_minor}
e_{ij}=-a_{i4;j4} \text{\ and\ } e_{ij;kl}=-a_{ij;kl}, \text{\ all \ } i,j,k,l\in[3].
\end{equation}

\begin{dfn}[Cant and difference tuples of $E$]\label{dfn:cant_tuple}

If $E$ is a $4\times 4$ perturbation matrix as in (\ref{eqn:eij}),
the \textbf{cant tuple} of $E$  is  $c=(c_j)\in \R_{\le0}^6$, with
$c_1:=e_{23}$, $c_2:=e_{13}$, $c_3:=e_{12}$, $c_4:=e_{32}$, $c_5:=e_{31}$ and $c_6:=e_{21}$.
The \textbf{difference tuple} is $d=(d_i)\in \R^6$, $d_i=c_{i+1}-c_{i}$, where  $c_7=c_1$.\footnote{In this section we take $n=4$,
because we do not know how to make definition \ref{dfn:cant_tuple} in more generality.}
\end{dfn}
In other words, start, say,  at entry $(2,3)$--th in $E$,
then go to the $(2,1)$--th entry, etc., and thus gather the entries of $E$ into the cant tuple
$c$, running in a \textbf{crocked closed path} (see an example in p.~\pageref{ex:example}).
\label{com:crocked}
Using $e_{i4}=e_{4i}=0$ from (\ref{eqn:eij}), we get
\begin{equation}\label{eqn:di_formulas_e}
d_I=
e_{i,i+1;i-1,4},\quad I\in\{1,3,5\},\quad d_I=
e_{i-1,4;i,i+1},\quad I\in\{2,4,6\},
\end{equation}
where  $i=I \mod 3$ and indices in $e_{ij}$  are reduced modulo 3.

\begin{dfn}[Cant and difference tuples of $A$]\label{dfn:cant_A}
For $A=B-E\in M_4^{VI}$ as in (\ref{eqn:box_and_pert}), the cant and difference tuples of $A$ are, by definition, the cant and difference tuples of $E$.\footnote{This makes sense, because a box is an unperturbed alcoved polyhedron.}
\end{dfn}
From (\ref{eqn:eij_as_2_minor}) it follows
\begin{equation}\label{eqn:di_formulas_a}
d_I=
-a_{i,i+1;i-1,4},\quad I\in\{1,3,5\},\quad d_I=
-a_{i-1,4;i,i+1},\quad I\in\{2,4,6\}.
\end{equation}
Explicitly,
\begin{equation}\label{eqn:di_values}
d_1=-a_{12;34},\ d_3=-a_{31;24},\ d_5=-a_{23;14}, \ d_2=-a_{14;23},\ d_4=-a_{34;12},\ d_6=-a_{24;31}.
\end{equation}

\begin{figure}[ht]
 \centering
  \includegraphics[width=7cm]{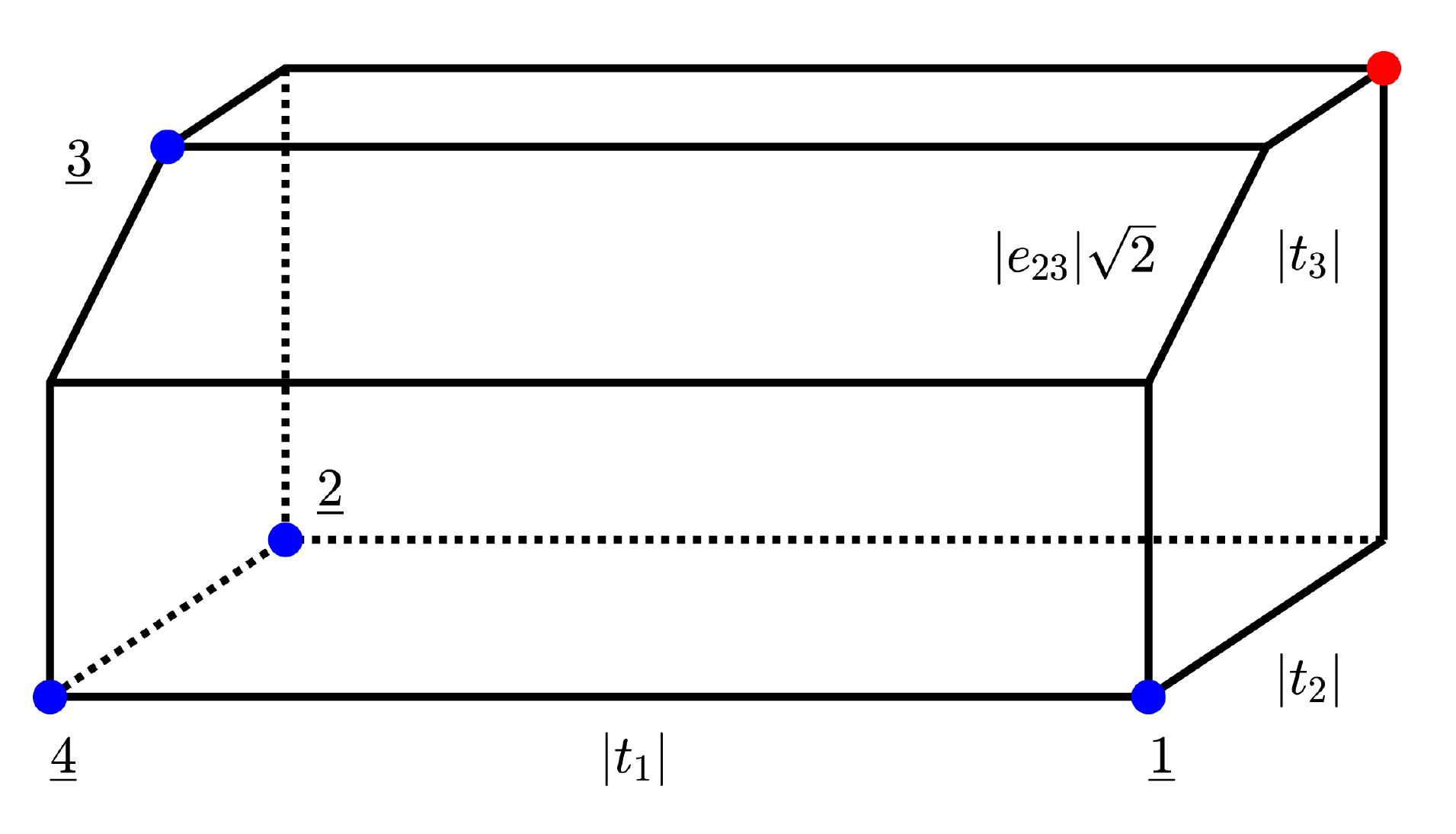}\\
  \caption{One cant  performed on the box $\BB=\PP(B(t))$  yields the alcoved polyhedron $\PP(A)$, for  matrix $A$ in example \ref{ex:one_cant}.
  The Euclidean (resp. tropical) length of the new edge is $|e_{23}|\sqrt{2}$ (resp.  $|e_{23}|$). The angles of the pentagonal facets in $\PP(A)$ are $3\pi/4$ and $\pi/2$. There is a left pentagon meeting $\NN=\underline{123}$ and a right pentagon meeting $\SSSS=\underline{4}$.}\label{fig_04_one_cant_new}
\end{figure}

\begin{ex}[One cant]\label{ex:one_cant}
From a box $\BB$, we will  remove a right prism, whose base is a right isosceles triangle, legged $|e_{23}|$
(see figure \ref{fig_04_one_cant_new}). Given the box matrix $B=B(t)$, let us \textbf{cant the top front edge of the box $\BB=\PP(B)$,
using  the parameter $e_{23}<0$} such that $|e_{23}|\le \min\{|t_2|, |t_3|\}$. We have
$A=\left(\begin{array}{rrrr}0&t_1&t_1&t_1\\t_2&0&t_2-e_{23}&t_2\\t_3&t_3&0&t_3\\0&0&0&0\end{array}\right)$. Let
$e_{ij}=0$, otherwise.
The resulting polyhedron $\PP(A)$ has one more facet than $\BB$ and  two 4--gon facets  in the box $\BB$ become 5--gon facets  in $\PP(A)$.
\end{ex}

\begin{dfn}[Cantable edge]\label{dfn:cantable_edge}
An edge $\ell$ in a box $\BB\subset \R^3$ is \textbf{cantable} if no
end point of $\ell$ is a Pole (see figure \ref{fig_03_three_cubes} right). A box  has  six cantable edges.
\end{dfn}
The cantable edges in a box form a \textbf{Petrie polygon}, i.e., a polygon dividing the box into two congruent parts. It is also
a \textbf{3--equilateral spacial polygon} and  an \textbf{antiprismatic polygon}, see \cite{Prasolov_Tikh,Schulte}.
Assuming the box matrix is (\ref{eqn:box_matrix_2}), the equations of the cantable edges are:   $\{x_3=0, x_2=t_2\}$ for
$\ell_1$,\; $\{x_3=0, x_1=t_1\}$ for $\ell_2$,
\; $\{x_2=0, x_1=t_1\}$ for $\ell_3$, \; $\{x_2=0, x_3=t_3\}$ for $\ell_4$, \;$\{x_1=0, x_3=t_3\}$ for $\ell_5$ and
$\{x_1=0, x_2=t_2\}$ for $\ell_6$.

\medskip

Box, perturbation, cant and difference tuples have been defined for visualized idempotent matrices but  it is good to have analogous
notions for NI matrices.

\begin{lem}[Box and perturbation of visualization of a NI matrix]\label{lem:visualizing_2}
Let $V=(v_{ij})\in M_4^{VI}$ be the visualization   of $A=(a_{ij})\in M_4^{NI}$ and write $V=B-E$ (box
and perturbation, as in (\ref{eqn:box_and_pert})). Then
\begin{equation}\label{eqn:s_and_a}
v_{ij}=-a_{i4;ij},\ i,j\in[4] \text{\ and \ } e_{ij}=-v_{i4;j4}=-a_{i4;j4}, \ i,j\in[3].
\end{equation}
In particular, $v_{i4}=a_{i4}+a_{4i}$, $i\in[4]$,  are the fourth column entries of $V$.
\end{lem}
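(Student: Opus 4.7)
The plan is to reduce everything directly to the visualization formula from Lemma \ref{lem:visualizing}, the 2--minor definition in (\ref{eqn:2_minors_1}), and the invariance Lemma \ref{lem:2_minors}; the rest is a symbolic unfolding that collapses via the normality condition $a_{ii}=0$.

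First I would set $n=4$ and apply Lemma \ref{lem:visualizing}, which gives the explicit entries $v_{ij}=a_{4i}+a_{ij}-a_{4j}$ for all $i,j\in[4]$ via formula (\ref{eqn:ij_visualization}). In parallel I would expand the 2--minor $a_{i4;ij}$ using the definition (\ref{eqn:2_minors_1}), obtaining $a_{i4;ij}=a_{ii}+a_{4j}-a_{ij}-a_{4i}$. Because $A$ is normal, $a_{ii}=0$, and the right--hand side collapses to $a_{4j}-a_{ij}-a_{4i}$, which is exactly $-v_{ij}$. This is the first identity $v_{ij}=-a_{i4;ij}$.

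For the second identity I would argue in two steps. The leftmost equality $e_{ij}=-v_{i4;j4}$ is not something new to prove: it is precisely (\ref{eqn:eij_as_2_minor}) applied to the visualized idempotent matrix $V$, which by construction admits the box--perturbation decomposition $V=B-E$ given in Definition \ref{dfn:perturbation}. The equality $v_{i4;j4}=a_{i4;j4}$ is then an immediate instance of the invariance Lemma \ref{lem:2_minors}: 2--minors are preserved under visualization. Chaining the two equalities yields $e_{ij}=-a_{i4;j4}$ for $i,j\in[3]$.

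Finally, the fourth--column statement is obtained by specializing the identity $v_{ij}=a_{4i}+a_{ij}-a_{4j}$ to $j=4$ and using $a_{44}=0$, which immediately gives $v_{i4}=a_{4i}+a_{i4}$. I do not foresee any real obstacle; the only subtle point is bookkeeping with the 2--minor index convention (ensuring that the pair $(i,4;i,j)$ is interpreted via (\ref{eqn:2_minors_1}) and the antisymmetry (\ref{eqn:2_minors_2})), but once the substitutions are made, the identities come out by cancellation.
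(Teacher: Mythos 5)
Your proposal is correct and follows essentially the paper's own argument: the first identity is just equation (\ref{eqn:entries_of_visual}) (which you rederive from (\ref{eqn:ij_visualization}) and $a_{ii}=0$), and for the second you apply (\ref{eqn:eij_as_2_minor}) to $V$ and then invoke the invariance Lemma \ref{lem:2_minors}, which is precisely the alternative route the paper itself indicates (its primary phrasing uses the cocycle relation (\ref{eqn:cocycle}) instead, but the content is the same). The specialization $v_{i4}=a_{i4}+a_{4i}$ via $j=4$ and $a_{44}=0$ is also as in the paper.
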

\begin{proof}
The first equality in (\ref{eqn:s_and_a}) is (\ref{eqn:entries_of_visual}); the second and third equalities follow from the first one,
 (\ref{eqn:eij_as_2_minor}) and the cocycle relation (\ref{eqn:cocycle}). Alternatively, one can use lemma \ref{lem:2_minors}. \label{use:lem:2_minors}
\end{proof}
The moral is that certain 2--minors of $A\in M_4^{NI}$ provide both the edge--lengths of the box of $\PP(A)$ as well as the cant parameters. The following definition is a consequence of $e_{ij}=-v_{i4;j4}=-a_{i4;j4}$  in (\ref{eqn:s_and_a}).
\label{why_2_minors}\label{moral_1}

\begin{dfn}[Perturbation, cant and difference of NI matrix]\label{dfn:perturbation_of_NI}
The perturbation matrix of $A\in M_4^{NI}$ is, by definition,  the perturbation matrix $E$ of the visualization  $V=\leftidx{^D}A=B-E$ of $A$, where $D=\diag(\row(A,n))$.
The cant and difference tuples of $A$ are  the cant and difference tuples of $E$.
\end{dfn}

\section{Symmetries of a cube}\label{sec:symmetries}
The contents of this section are well--known. Let $(u_1,u_2,u_3)$ be the canonical basis in $\R^3$. Let $\Sigma_3, \Sigma_4$ denote the
permutation groups in 3, 4 symbols.
The unit  cube centered at the origin $\QQ\subset \R^3$ has 24 \textbf{orientation--preserving  symmetries} (called \textbf{rotations},
for short). Indeed, each
rotation of $\QQ$  permutes the  diagonals of $\QQ$ (see in figure \ref{fig_03_three_cubes}, center, the diagonals of a cube,
labeled 1, 2, 3, 4 not underlined). The \textbf{full group of symmetries} of $\QQ$ is the
direct product $\GG_4=\Z_2\times \Sigma_4$. So, in addition to rotations, $\GG_4$ contains all the
\textbf{orientation--reversing  symmetries}, such as the antipodal map and orthogonal reflections on  planes.

The element $(\id,\id)\in \GG_4$ represents the identity map on the cube,  while  the element $(-\id,\id)$
represents the \textbf{antipodal map}
$-\id:\QQ \rightarrow \QQ$.
For $\tau\in \Sigma_4$, the element
$(\id,\tau)$ represents
the \textbf{orientation--preserving symmetry} permuting the diagonals of
$\QQ$  according to $\tau$. On the other hand, the element $(-\id,\tau)$ represents the \textbf{orientation--reversing symmetry} permuting
the diagonals of $\QQ$  according to $\tau$. In this section, we write  $+\tau$ instead of $(\id,\tau)$, and  $-\tau$
instead of $(-\id,\tau)$, for convenience.

\begin{figure}[ht]
 \centering
  \includegraphics[width=12cm]{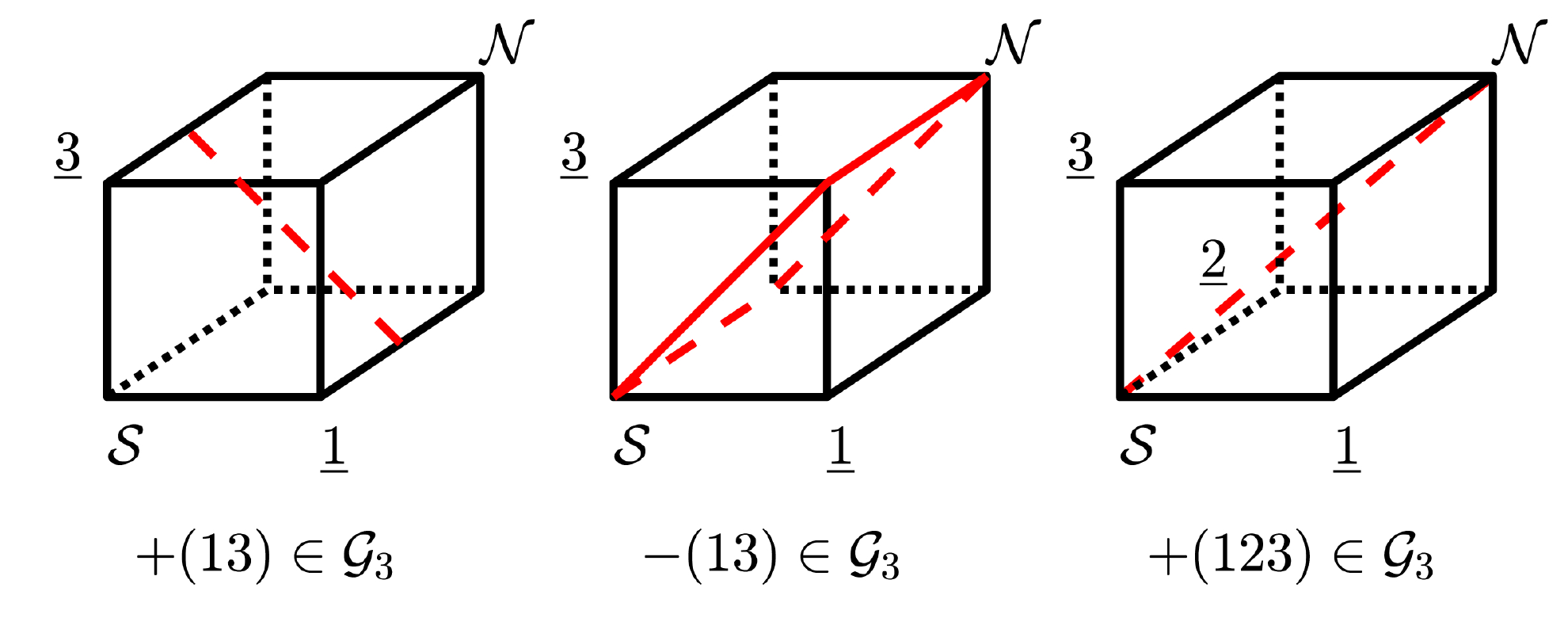}\\
  \caption{A Polar Exchange  rotation is $+(13)$ (left). The orthogonal reflection on plane $x_1=x_3$ is $-(13)$ (center). The $120^\circ$ rotation around the Polar Axis (in the positive sense) is $+(123)$  (right). The invariant sets are marked in red.}\label{fig_05_three_cubes_2}
\end{figure}

Recall that $(u_1,u_2,u_3)$ is the canonical basis in $\R^3$.
For our purposes, we only need cube symmetries preserving the Polar Axis (which is the diagonal labeled 4)
so, we  work with the subgroup \label{dfn:group}
$\GG_3:=\Z_2\times \Sigma_3$.\footnote{$\GG_3$ is isomorphic to a  dihedral group of order 12.}
Since $(12),(13)$ generate the group $\Sigma_3$, then $+(12),+(13),-\id$ generate $\GG_3$.
Here is a description of these group elements,
as  cube symmetries:
\begin{enumerate}
\item  for $i\neq j\in [3]$,  the element $+(ij)$ permutes the diagonals $i$ and $j$ of $\QQ$, preserving orientation. It is easily seen that it corresponds to  the  \textbf{$180^\circ$ rotation around the  axis line spanned by the vector $u_i-u_j$} (see red dashed line in figure \ref{fig_05_three_cubes_2}, left).
It transforms the North Pole into the South Pole. It transforms the facet $x_i=1$ into  the facet $x_j=-1$ and the
facet $x_k=1$ into  the facet $x_k=-1$, for $i\neq k\neq j$,
(see figure \ref{fig_05_three_cubes_2}, left). Such a map is called a \textbf{Polar Exchange}. There are three Polar Exchanges: $+(12), +(13)$ and $+(23)$. \label{dfn:polar_exch}

\item  for $i\neq j\in [3]$, the element $-(ij)$ is the  orthogonal reflection
on the plane $x_i=x_j$. The effect  on $\QQ$ is to produce a \textbf{chiral}  \label{item:chiral} copy of $\QQ$. \footnote{A chiral copy of a rubber glove $G$ is obtained by turning $G$ inside out. Alternatively, we cut $G$ along a meridian, we fold the two pieces inside out and then we glue them again. A chiral copy of the cube $\QQ$ is obtained similarly.}
\end{enumerate}

\begin{rem}\label{rem:cantable_go_to_cantable}
If the edge $\ell$ is cantable in  $\QQ$, then  the edge  $g(\ell)$ is cantable in $\QQ=g(\QQ)$,   for all  $g\in \GG_3$.
\footnote{This is not true for all $g\in \GG_4$; for instance, take $g=+(1234)$.}
\end{rem}

\subsection{Matrix transformations vs. cube symmetries}\label{subsec:transf}

The following lemma is obvious.
\begin{lem}\label{lem:transpose}
If $A\in M_4^{NI}$, then $A^{T}\in M_4^{NI}$. Furthermore,  $\underline{i}_A\mapsto \underline{jkl}_{A^T}$  is a bijection between the
sets of principal  vertices of $\PP(A)$ and
$\PP(A^{T})$, with  $\{i,j,k,l\}=[4]$. \qed
\end{lem}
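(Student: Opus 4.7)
The plan is to verify the two assertions separately, both essentially by unpacking the relevant definitions.

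For the first claim $A^T\in M_4^{NI}$: transposition evidently preserves the zero diagonal and the non-positivity of entries, so $A^T$ is normal. For tropical idempotency of $A^T$, I just need to check that the defining inequality of Definition \ref{dfn:normal}(\ref{item:NI}) is symmetric under transposition. Explicitly, for any indices $i,j,k$,
\begin{equation*}
(A^T)_{ik}+(A^T)_{kj} \;=\; a_{ki}+a_{jk} \;=\; a_{jk}+a_{ki} \;\le\; a_{ji} \;=\; (A^T)_{ij},
\end{equation*}
where the inequality is just the idempotency of $A$ applied with the relabelling $(j,k,i)$ in place of $(i,k,j)$. Since the conclusion holds for every triple $(i,j,k)\in [4]^3$, the matrix $A^T$ is tropically idempotent.

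For the bijection, I would simply apply Definition \ref{dfn:more_labels} twice, once to $A$ and once to $A^T$. Applied to $A$, it says: the generator $\underline{i}_A$ of $\PP(A)$ is the column $\col(A_0,i)$, while the other principal vertex $\underline{jkl}_A$ is the column $\col((A^T)_0,i)$, with $\{i,j,k,l\}=[4]$. Applied to $A^T$ (which we have just shown belongs to $M_4^{NI}$), the same rule reads: the generator $\underline{i}_{A^T}$ of $\PP(A^T)$ is $\col((A^T)_0,i)$, and the other principal vertex $\underline{jkl}_{A^T}$ is $\col(((A^T)^T)_0,i)=\col(A_0,i)$. Thus the labelling rule itself is symmetric under the exchange $A\leftrightarrow A^T$ combined with the exchange (single digit label)$\leftrightarrow$(three digit complementary label).

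Consequently the assignment $\underline{i}\mapsto \underline{jkl}$ (with $\{i,j,k,l\}=[4]$) gives a well-defined involution on the 8-element set of possible principal labels, and restricts to a bijection sending generators of $\PP(A)$ to non-generator principal vertices of $\PP(A^T)$, and non-generator principal vertices of $\PP(A)$ to generators of $\PP(A^T)$. This is exactly the claimed bijection.

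I do not anticipate any serious obstacle: both parts are formal unpackings. The only mild subtlety is conceptual and not computational, namely that the bijection is naturally viewed as a bijection between label sets (equivalently, between principal vertices as labelled objects), since in the non-maximal case distinct labels may coincide as points of $\R^3$; but nothing in the statement or proof is disturbed by such coincidences, because the symmetry is built into the defining rule of Definition \ref{dfn:more_labels}.
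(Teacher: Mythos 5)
Your verification is correct and is exactly the routine unpacking the paper has in mind: the paper states this lemma without proof (it is declared obvious), and your two steps — transposition preserves the zero diagonal, non-positivity and the idempotency inequality $a_{jk}+a_{ki}\le a_{ji}$, and the labelling rule of Definition \ref{dfn:more_labels} is symmetric under $A\leftrightarrow A^{T}$ since $(A^{T})^{T}=A$ — are the intended argument. Your closing remark about reading the correspondence at the level of labels (harmless even when labels coincide as points in the non-maximal case) is an appropriate and accurate caveat.
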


\begin{cor}[Antipodal map; Corollary 4 in \cite{Jimenez_Puente}]\label{cor:symmetry}
If $A\in M_4^{NI}$, then $A^{T}\in M_4^{NI}$ and
\begin{equation}\label{eqn:sym}
\PP(A^T)=-(\PP(A)).
\end{equation} In particular,  the matrix $A$ is symmetric if and only if  the alcoved polytope $\PP(A)$ is  central--symmetric
with respect to the origin.\qed
\end{cor}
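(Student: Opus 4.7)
The plan is to read off the effect of transposition on the system of linear inequalities that defines $\PP(A)$, and then derive the "in particular" statement from the tightness of those inequalities when $A$ is NI.

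\textbf{Step 1: $A^T \in M_4^{NI}$.} Normality is immediate because $(A^T)_{ii} = a_{ii} = 0$ and the entries remain non--positive. Idempotency $(A^T)_{ik} + (A^T)_{kj} \le (A^T)_{ij}$ unfolds to $a_{ki} + a_{jk} \le a_{ji}$, which is just the idempotency inequality of $A$ with the role of $(i,j)$ swapped. So $A^T$ lies in $M_4^{NI}$.

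\textbf{Step 2: $\PP(A^T) = -\PP(A)$.} Using the system \eqref{eqn:alcoved_2}--\eqref{eqn:alcoved_3} (with $n=4$), a point $x$ lies in $\PP(A^T)$ iff
\[
(A^T)_{4k} \le x_k \le -(A^T)_{k4},\qquad (A^T)_{ij} \le x_i - x_j \le -(A^T)_{ji},
\]
i.e.\ iff $a_{k4} \le x_k \le -a_{4k}$ and $a_{ji} \le x_i - x_j \le -a_{ij}$. Substituting $y := -x$, these inequalities become $a_{4k} \le y_k \le -a_{k4}$ and $a_{ij} \le y_i - y_j \le -a_{ji}$, which is exactly the defining system of $\PP(A)$. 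Hence $x \in \PP(A^T) \iff -x \in \PP(A)$, that is, $\PP(A^T) = -\PP(A)$.

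\textbf{Step 3: The symmetry characterization.} If $A = A^T$, then $\PP(A) = \PP(A^T) = -\PP(A)$, so $\PP(A)$ is centrally symmetric about the origin. Conversely, assume $\PP(A) = -\PP(A) = \PP(A^T)$. Here the unique representation corollary \ref{cor:unique_rep} applies only to VI matrices, so instead I would invoke the tightness of the defining inequalities for a NI matrix (quoted in the excerpt, attributed to \cite{Afriat,Puente_kleene}): the entries of any $A \in M_4^{NI}$ are recovered from its polyhedron by
\[
a_{ij} = \min_{x \in \PP(A)}\, (x_i - x_j),\qquad a_{4k} = \min_{x \in \PP(A)}\, x_k.
\]
Applying these formulas to both $A$ and $A^T$ (which represent the same polyhedron by hypothesis) forces $a_{ij} = (A^T)_{ij} = a_{ji}$ for every $i,j$, i.e.\ $A = A^T$.

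The only real subtlety is the converse of the "in particular" statement: one cannot simply quote corollary \ref{cor:unique_rep} because that uniqueness is phrased only for visualized matrices, whereas $A$ and $A^T$ need not both be visualized. The fix is to use the tightness of the NI inequalities so that each entry of $A$ is intrinsically determined by $\PP(A)$; everything else is a routine bookkeeping of indices under transposition.
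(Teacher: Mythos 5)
Your proof is correct, but it takes a different route from the paper. The paper itself offers no computation: the statement is quoted from Corollary 4 of \cite{Jimenez_Puente} and is presented as a consequence of lemma \ref{lem:transpose}, i.e.\ of the tropical--span picture in which the columns of $(A^T)_0$ represent the principal vertices $\underline{jkl}$ of $\PP(A)$, so that transposition exchanges the two families of principal vertices and realizes the antipodal map. You instead work entirely on the facet side, reading $\PP(A^T)$ off the inequality system (\ref{eqn:alcoved_2})--(\ref{eqn:alcoved_3}) and observing that the substitution $y=-x$ turns it into the system for $\PP(A)$; this is a clean, self--contained verification that only uses the (background) fact that for an NI matrix the polytope is cut out by exactly those inequalities. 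Your treatment of the converse in the ``in particular'' clause is the genuinely valuable part: you correctly notice that corollary \ref{cor:unique_rep} cannot be quoted, since $A$ and $A^T$ need not be visualized, and you repair this by invoking the tightness of the NI inequalities (the fact the paper attributes to \cite{Afriat,Puente_kleene}), which recovers every entry of $A$ as an extremal value of a linear functional over $\PP(A)$ and hence gives uniqueness of the NI representative; for completeness you should state the recovery of the entries $a_{k4}$ as well (namely $a_{k4}=-\max_{x\in\PP(A)}x_k$, or equivalently $\min_{x}(x_4-x_k)$ with the convention $x_4=0$), but that is the same tightness argument. In exchange for being slightly longer than the paper's citation, your argument avoids the vertex--bijection machinery altogether and makes the symmetry characterization genuinely self--contained.
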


\begin{ex}\label{ex:unit_cube}
The matrices of the centered unit cube $\QQ$ (of edge--length 2)  are
$Q_0=\begin{pmatrix}
1&-1&-1&-1\\
-1&1&-1&-1\\
-1&-1&1&-1\\
0&0&0&0
\end{pmatrix}
$, $Q=\begin{pmatrix}
0&-2&-2&-1\\
-2&0&-2&-1\\
-2&-2&0&-1\\
-1&-1&-1&0
\end{pmatrix}\in M_4^{NI}$ and the visualization of $Q$ is $\leftidx{^D}Q=\begin{pmatrix}
0&-2&-2&-2\\
-2&0&-2&-2\\
-2&-2&0&-2\\
0&0&0&0
\end{pmatrix}.$
\end{ex}

The  \textbf{group} $\GG_3=\Z_2\times \Sigma_3$ (defined in p.~\pageref{dfn:group}) \textbf{acts}\footnote{The action of a group $G$ on a set $S$ is a map $G\times S\rightarrow S$, $(g,s)\mapsto g\cdot s$, such that (a) $\id\cdot s=s$, all $s\in S$ and  (b) $g_1\cdot(g_2\cdot s)=(g_1 g_2)\cdot s$, all $g_1, g_2\in G$, all $s\in S$.} on $M_4^{NI}$ as follows:  \label{dfn:group_action_1} for $A\in M_4^{NI}$ and $i\neq j\in [3]$,
 \begin{enumerate}
 \item $-\id\cdot A:=A^T$, \label{item:alpha}
 \item $-(ij) \cdot A:=\leftidx{^{(ij)}}A$, (permutation of two generators) \label{item:beta}
 \item $+(ij) \cdot A:=\leftidx{^{(ij)}}A^T$, (Polar Exchange). \label{item:gamma}
 \end{enumerate}
 Indeed, item \ref{item:alpha} is a restatement of (\ref{eqn:sym}). To explain items \ref{item:beta} and \ref{item:gamma}, recall that
  the element  $+(ij)\in\GG_3$   permutes the diagonals $i$  and $j$ of the cube $\QQ$;
 however it does not permute the generators $\underline{i}$ and $\underline{j}$. It is the group element $-(ij)$ who does this job.

\begin{rem}[Chirality]\label{rem:chiral}
Let $A\in M_4^{NI}$.
A  \textbf{chiral} copy of $\PP(A)$ is given by $\PP (\leftidx{^{(ij)}}{A})$, for $i\neq j\in[3]$. Two chiral copies of $\PP(A)$
are related by a power of $+(123)\in \GG_3$. Here   $+(123)$ is  the  $120^\circ$
rotation around the Polar Axis (in the positive sense); see figure \ref{fig_05_three_cubes_2} (right).
\end{rem}

\section{Maximality}\label{sec:maximality}
If $\PP$ is a maximal alcoved polyhedron in $\R^3$, then in \cite{Jimenez_Puente} (see also \cite{Joswig_Kulas,Tran}) it is  proved that
\begin{enumerate}
\item  the $f$--vector of $\PP$ is $(f_0,f_1,f_2)=(20,30,12)$; in particular $\PP$ is a \textbf{dodecahedron} and it is \textbf{simple} (i.e., trivalent in vertices), because $20\times3= 30\times2$,
\item  the facets of $\PP$  are alcoved 4--gons, 5--gons and 6--gons, with \textbf{facet  vector} (also called $p$--vector) $p=(p_4,p_5,p_6)=(2,8,2)$, $(3,6,3)$ or $(4,4,4)$, where  $f_2=p_4+p_5+p_6$ and  $p_j$ is the number of $j$--gons. \label{dfn:p_vector}
\end{enumerate}
Thus, every maximal alcoved polyhedron is a dodecahedron, but the converse is not true, in general.

\section{Tropical distance}\label{sec:dist}
We will use \textbf{tropical distance}\footnote{$\dd(p,q)$ is the maximum of the Chebyshev distance $\dd_{Ch}(p,q):=\max_{i,j\in[n]}\{|p_i-q_i|\}$ and a tropical version of the Hilbert  projective distance $\dd (p,q)_{pr.H}:=\max_{i,j\in[n]}\{|p_i-q_i-p_j+q_j|\}$.} to measure edge--lengths:
\begin{equation}\label{dfn:trop_dist}
\dd (p,q):=\max_{i,j\in[n]}\{|p_i-q_i|, |p_i-q_i-p_j+q_j|\}, \quad p,q\in\R^n.
\end{equation}
For example, we have $\dd ((1,1),(0,0))=1$ (not $\sqrt{2}$!), $\dd ((1,1,1),(0,0,0))=1$ (not $\sqrt{3}$!),  and
$\dd ((-5,-2),(-2,-5))=6$. See caption in figure \ref{fig_04_one_cant_new}.

Tropical distance between integral points is just integer (i.e., lattice) distance.

This distance (and a related seminorm and  norm) have been used  since 1979 in \cite{Akian_Gau_Nit_Sin,Cohen_al,Cuninghame,Cuninghame_New,Cuninghame_But,Develin_Sturm,Joswig_Sturm_Yu,Krivulin_Ser,Puente_kleene,Puente_line}.
In the literature, they are  called by various names, such as \emph{tropical Hilbert    projective distance} or
\emph{Chebyshev distance} and \emph{range seminorm}. In \cite{Deza} we find the \emph{Hilbert projective metric} on the space of rays of
a convex cone.
\begin{rem}\label{rem:2_minors}
If $i\le j$, then $p_i-q_i+p_j-q_j$ is the 2--minor of $\begin{pmatrix}
p_i&q_i\\
p_j&q_j
\end{pmatrix}$. In particular, if $p_n=q_n=0$, then $\dd(p,q)$ is the maximum of the absolute values of the 2--minors of the matrix whose columns are the coordinates of $p$ and $q$.
\end{rem}

\section{Polar Casks and Equatorial Belt}\label{sec:casks_and_belt}

\begin{dfn}\label{dfn:Cask}
In the alcoved polyhedron $\PP\subset\R^3$, the \textbf{North Cask}
(resp. \textbf{South Cask})  is the union of the  facets of $\PP$ containing the North Pole
(resp. the South Pole).
\end{dfn}

\begin{figure}[ht]
\centering
\includegraphics[keepaspectratio,width=7cm]{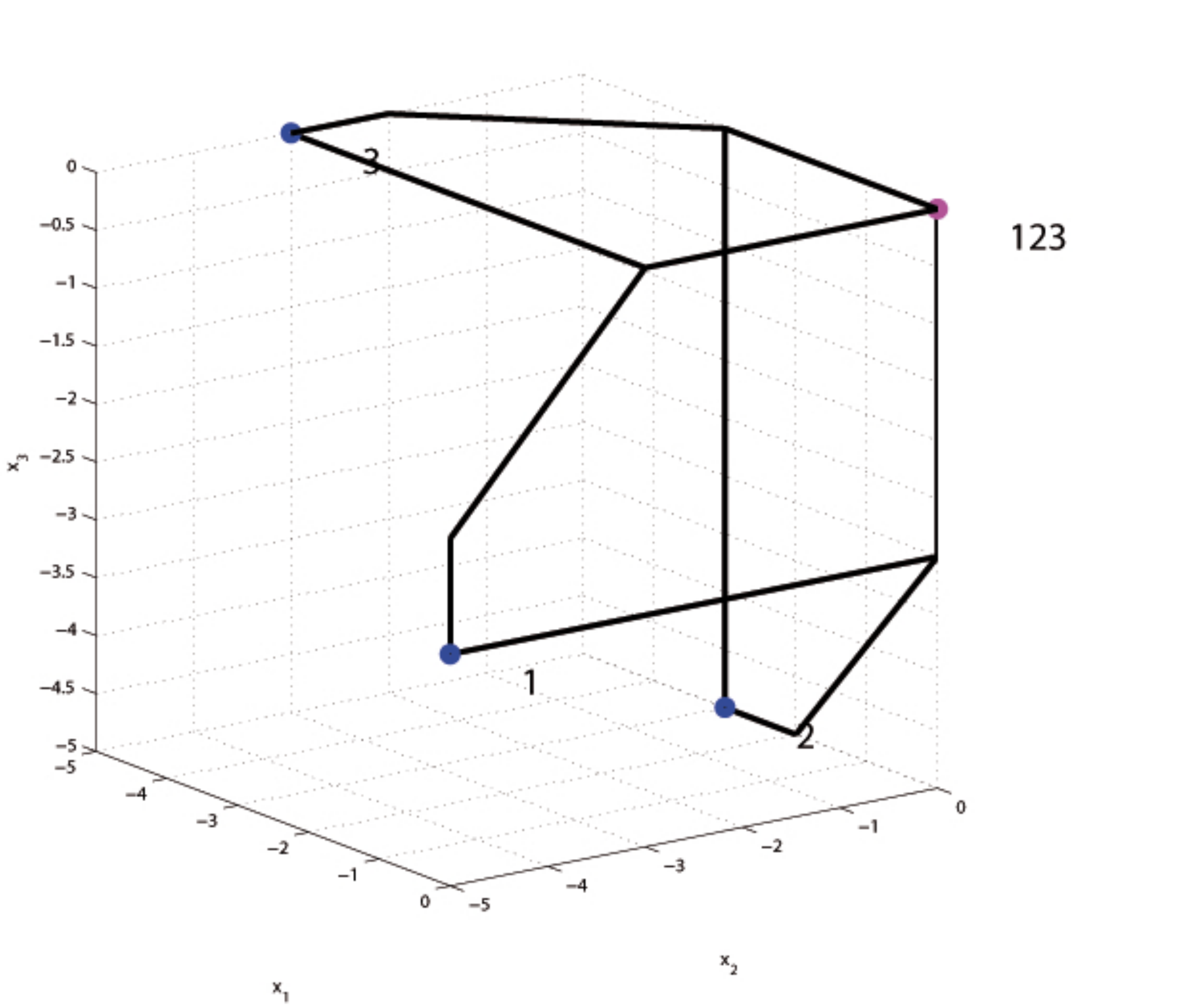}\\
\caption{North Cask of type $(5.5.5)$ left. This means that three left 5--gons meet at the North Pole. Note that the dihedral angles in the North (resp. South) Cask are $90^\circ$ each. Cf. figure \ref{fig_07_types_N}.}
\label{fig_06_N_Cask}
\end{figure}

\begin{dfn}[Type of a Polar Cask]\label{dfn:Cask_type}
In a maximal alcoved  dodecahedron $\DD\subset \R^3$, consider a Pole $\VV\in\{\NN,\SSSS\}$. Let $p,q,r\in\{4,5,6\}$ with $p+q+r=15$.
In $\VV$, if the facet in $x_1=cnst$ is a $p$--gon, the facet in $x_2=cnst$ is a $q$--gon and the facet in $x_3=cnst$ is a $r$--gon, and these facets meet at $\VV$, then
the \textbf{type of the $\VV$--Cask} is  $(p.q.r)$.
    If such a pentagon is right (resp. left) then
    the $\VV$--Cask is  \textbf{right} (resp. \textbf{left}).
    \end{dfn}

\subsection{North Cask}\label{subsec:north_cask}
For $i\in[3]$ we consider $i-1$ and $i+1$  reduced to elements in
$[3]$ modulo 3. Notations $\rho_i, \lambda_i,\delta_i,\epsilon_i$ are introduced in
the proof of the next proposition. The \textbf{Greek letter} $\lambda$ reminds us of \textbf{left}, and $\rho$ reminds us of \textbf{right}.  These letters
will represent  certain non--negative edge--lengths of some facet $x_i=cnst$, with $i\in[3]$ of the North Cask, where the subscripts of
the letters coincide with the subscripts of the variable facet equation.  Every edge--length $\rho_i$ (resp. $\lambda_i$) arises
from  canting a certain cantable edge in the bounding box. Every edge--length $\delta_i$ (resp. $\epsilon_i$) is the \textbf{complement} of a corresponding $\rho_i$ (resp. $\lambda_i$) in the sense that (\ref{eqn:delta_eps}) holds.

\begin{prop}\label{prop:type_North_Cask}
For a maximal alcoved dodecahedron $\DD=\PP(A)$ with $A\in M_4^{VI}$, the following hold:
\begin{enumerate}
\item the \textbf{type  of the North Cask} is  determined by the \textbf{signs of three 2--minors}  of $A$:
$$a_{i,4; i-1,i+1}, \quad i\in [3],$$ \label{item:un}
\item the type of the North Cask is either  $(\tau(4).\tau(5).\tau(6))$, for all  permutations $\tau$ in $\Sigma_3$, or $(5.5.5)$ right, or  $(5.5.5)$ left,\label{item:deu}
\item  the  $f$--vector of the North Cask is $(f_0,f_1,f_2)=(10,12,3)$.\label{item:cua}
\end{enumerate}
\end{prop}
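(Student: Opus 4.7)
The plan is to read off all three parts directly from the constraint description of each facet $F_k := \PP(A) \cap \{x_k = 0\}$ of the North Cask, using the fact that $A$ is visualized, so $a_{4\ell} = 0$ for all $\ell$. Because of visualization, the constraint $x_j - x_k \ge a_{jk}$ restricted to $x_k = 0$ reduces to $x_j \ge a_{jk}$, so $F_k$ sits inside the \emph{effective rectangle} $[a_{ik}, 0] \times [a_{jk}, 0]$ in the $(x_i, x_j)$--plane (with $\{i, j, k\} = [3]$), and the only further constraints that can be binding on $F_k$ are the two diagonal cuts $x_i - x_j \ge a_{ij}$ and $x_j - x_i \ge a_{ji}$.

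The key computation is: the cut $x_i - x_j \ge a_{ij}$ removes the corner $(a_{ik}, 0)$ of $F_k$ if and only if $a_{ik} < a_{ij}$, i.e.\ iff $a_{ij} - a_{ik} > 0$. Expanding using $a_{4\ell} = 0$ identifies each such difference (up to the antisymmetry of \eqref{eqn:2_minors_2}) with one of the three 2-minors appearing in the statement, namely
\[
a_{i,4;i-1,i+1} = a_{i,i-1} - a_{i,i+1}.
\]
Writing $s_i := \sign(a_{i,4;i-1,i+1})$, a short case check shows that each nonzero $s_i$ switches on exactly one cut, placed on $F_{i+1}$ when $s_i = +$ and on $F_{i-1}$ when $s_i = -$ (indices cyclic in $[3]$). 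Maximality forces every vertex of $\PP(A)$ to be simple, hence every $s_i$ is nonzero; so the three signs produce exactly three cuts distributed over the three North--Cask facets, yielding $p + q + r = 12 + 3 = 15$ and proving item 1.

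For item 2 I run through the $2^3 = 8$ sign patterns. The six non-constant patterns $(s_1, s_2, s_3)$ produce the six permutations of $(4.5.6)$, one per pattern. The two constant patterns $(+,+,+)$ and $(-,-,-)$ both produce three pentagons, one per facet, but with their cut corners placed consistently around the Polar Axis in opposite rotational senses; Definition~\ref{dfn:left_penta} then distinguishes them as $(5.5.5)$ right and $(5.5.5)$ left. Item 3 is pure combinatorics from $p + q + r = 15$: a cap of three polygons glued pairwise along three edges meeting at the single apex $\NN$ has $1 + 3 + \bigl[(p-3)+(q-3)+(r-3)\bigr] = 10$ vertices, $3 + \bigl[(p-2)+(q-2)+(r-2)\bigr] = 12$ edges, and $3$ facets, in agreement with $V - E + F = 1$ for a topological disk.

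The principal obstacle is the bookkeeping in the middle paragraph: for each of the six cut events ($s_i = \pm$, $i \in [3]$), identifying exactly which corner of which facet $F_k$ is removed, and verifying that each individual $s_i$ switches on precisely one such cut. Once this table is in hand, items 1, 2, 3 follow by elementary enumeration and counting.
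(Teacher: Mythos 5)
Your proposal is correct and follows essentially the same route as the paper: your rule that $s_i=+$ clips a corner of the facet $x_{i+1}=0$ while $s_i=-$ clips a corner of $x_{i-1}=0$ is exactly the paper's dichotomy $\rho_{i+1}>0$ versus $\lambda_{i-1}>0$, and your enumeration of the eight sign patterns (six permutations of $(4.5.6)$, two chiral $(5.5.5)$ cases) together with the $f$--vector count matches the paper's Case 1/Case 2 analysis. The only difference is cosmetic: the paper additionally records the resulting edge--lengths via the complementary quantities $\delta_i,\epsilon_i$ (needed later in the paper, not for this statement), and your clipping criterion $a_{ij}-a_{ik}>0$ tacitly uses idempotency ($a_{i4}\le a_{ik}$) to discard the bottom box constraints on each cask facet, a point worth one explicit line.
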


\begin{figure}[H]
 \centering
  \includegraphics[width=12cm]{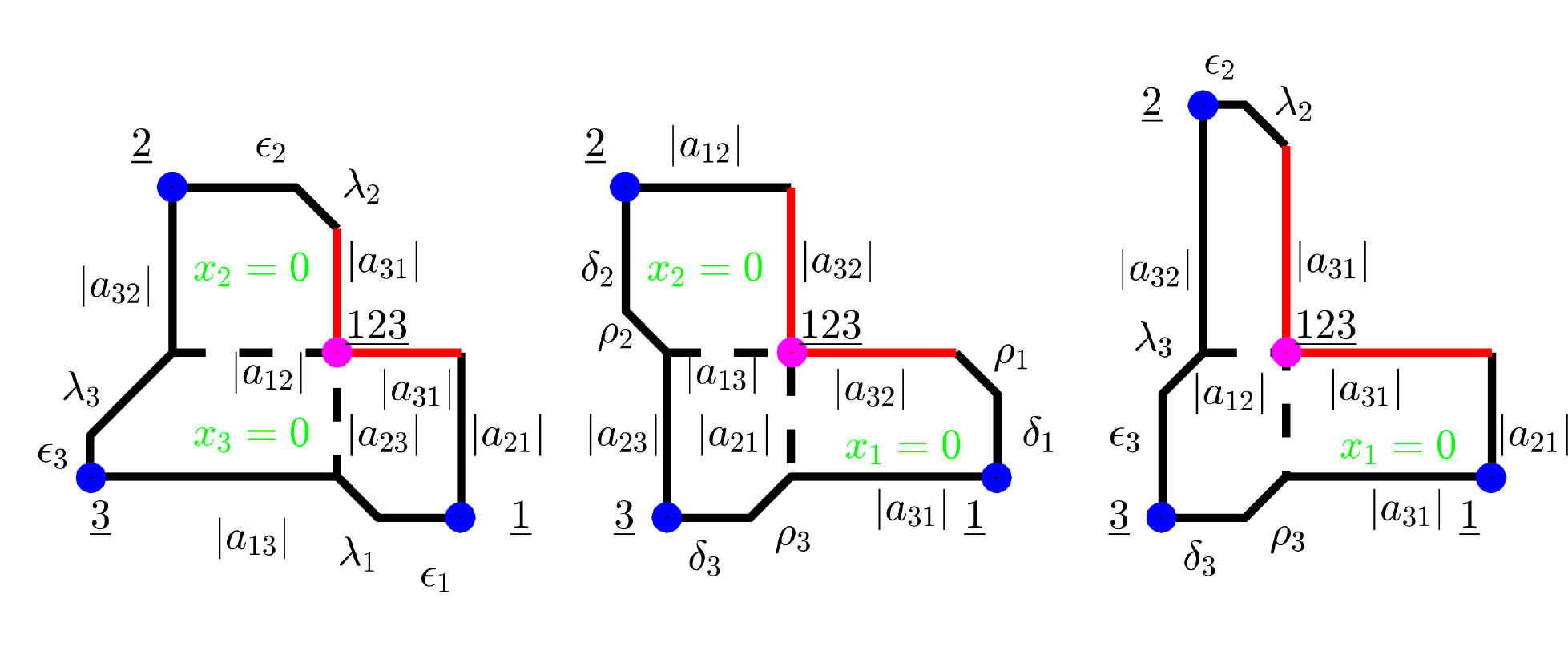}\\
  \caption{North Cask is $(5.5.5)$ left (left), $(5.5.5)$ right (center), $(4.5.6)$ (right). Dashed segments are meant to be folded down. Red segments are meant to be glued together. If the figure on the left is folded, we get the North Cask shown in  figure \ref{fig_06_N_Cask}. If, instead of folding down, we fold up, we will  get  chiral copies. Details on the center figure are explained in remark \ref{rem:details}.}
  \label{fig_07_types_N}
\end{figure}

\begin{proof}
The matrix $A$ is visualized idempotent, whence $a_{4i}=0$,  $i\in[4]$, i.e., the North Pole $\NN$ is the origin.
From (\ref{eqn:2_minors_1}) and (\ref{eqn:2_minors_2}), we get
\begin{equation}\label{eqn:3_2_minors}
a_{i,4; i-1,i+1}=a_{i,i-1}-a_{i,i+1}, \quad i\in [3].
\end{equation}
We cannot have $a_{i,4; i-1,i+1}=0$, by maximality of $\DD$ with respect to $f$--vector (i.e., the matrix $A$ is generic
inside $M_4^{VI}$).

For fixed $i\in[3]$, we \textbf{define}
\begin{enumerate}
\item if $a_{i,4; i-1,i+1}>0$, \label{case:one}
\begin{enumerate}
\item $\rho_{i+1}:=a_{i,4; i-1,i+1}$ and $\lambda_{i-1}:=-\infty$,\label{item:a}
\item $\delta_{i+1}:=a_{i,i-1;i+1,i-1}$ and $\epsilon_{i-1}:=-\infty$,\label{item:b}
\end{enumerate}
\item if $a_{i,4; i-1,i+1}<0$, \label{case:two}
\begin{enumerate}
\item $\rho_{i+1}:=-\infty$ and $\lambda_{i-1}:=-a_{i,4; i-1,i+1}$, \label{item:c}
\item $\delta_{i+1}:=-\infty$ and $\epsilon_{i-1}:=a_{i,i+1;i-1,i+1}$. \label{item:d}
\end{enumerate}
\end{enumerate}
Clearly,
\begin{equation} \label{eqn:only_3_pos}
\text{\ there\ are\ exactly\ three\ positive\ numbers\ in\ the\ set\ } \{\rho_1,\rho_2,\rho_3,\lambda_1,\lambda_2,\lambda_3\}.
\end{equation}

By the determinantal properties of 2--minors  and the cocycle relations (\ref{eqn:cocycle}), we have
\begin{equation}\label{eqn:delta_eps}
\delta_{i+1}+\rho_{i+1}=\begin{cases}
-a_{i-1,i+1}, \text{\ if\ case \ }  \ref{case:one} \text{\ above,}\\
-\infty, \text{\ otherwise,}
\end{cases} \epsilon_{i-1}+\lambda_{i-1}=\begin{cases}
-\infty, \text{\ if\ case\ }  \ref{case:one} \text{\ above,}\\
-a_{i+1,i-1}, \text{\ otherwise,}
\end{cases}
\end{equation}
and we say that  $\delta_{i+1}, \rho_{i+1}$ (resp. $\epsilon_{i-1}, \lambda_{i-1}$) are \textbf{complementary}.
Also, operating indices modulo 3, we have
\begin{equation}\label{eqn:new_lambdas}
\lambda_{i+1}=\begin{cases}
-a_{i-1,4;i+1,i}=a_{i-1,i}-a_{i-1,i+1}\\
-\infty,
\end{cases}
\epsilon_{i+1}=\begin{cases}
a_{i-1,i;i+1,i}\\
-\infty.
\end{cases}
\end{equation}
\begin{equation}\label{eqn:new_eps_lamb}
\epsilon_{i+1}+\lambda_{i+1}=\begin{cases}
-a_{i,i+1}\\
-\infty.
\end{cases}
\end{equation}

Two cases arise:
\begin{itemize}
\item \textbf{Case 1: if there exists $i\in[3]$ such that  $\rho_{i+1}>0$ and  $\lambda_{i+1}>0$}.  Then  $\delta_{i+1}$
and $\epsilon_{i+1}$  are positive (this follows from  $A$ being normal idempotent) and $\rho_{i+1}, \lambda_{i+1},\delta_{i+1}$,
 $\epsilon_{i+1}$, $|a_{i-1,i}|$ and $|a_{i,i-1}|$ are the edge--lengths
 of facet $x_{i+1}=0$, which makes a 6--gon in  the North Cask of $\DD$.\footnote{We explain the instance $i=2$, the other ones
 being similar (see figure  \ref{fig_07_types_N}, right). The coordinates of the generator $\underline{3}$ are $\begin{pmatrix}
 a_{13}\\a_{23}\\0
 \end{pmatrix}$ and $\delta_3+\rho_3=-a_{13}$ follows from (\ref{eqn:delta_eps}), $\epsilon_3+\lambda_3=-a_{23}$ follows
 from (\ref{eqn:new_eps_lamb}). Besides, $\rho_3+|a_{21}|=|a_{23}|$  follows from (\ref{eqn:3_2_minors}) and case
 \ref{item:a}. Similarly, we have $\lambda_3+|a_{12}|=|a_{13}|$.} Geometrically, the effect on the bounding box of $\DD$ is canting the left back edge $\ell_3$ by cant parameter $\lambda_3$ and the right front edge $\ell_6$ by cant parameter $\rho_3$ (recall caption in figure \ref{fig_03_three_cubes}).

 Further, $\rho_{i-1}<0$ and  $\lambda_{i-1}<0$ must hold,
 by items (\ref{item:a}) and (\ref{item:c}), in which case the facet $x_{i-1}=0$ makes a 4--gon in the North Cask of $\DD$, with
 edge--lengths $|a_{i,i-1}|$ and $|a_{i+1,i-1}|$. In addition, we have $\rho_i>0$ or  $\lambda_i>0$ but, because of (\ref{eqn:only_3_pos}), not both. Thus, the facet $x_{i}=0$ makes a 5--gon in the North Cask of $\DD$.
   \label{case:1}
  \item  \textbf{Case 2: if no  $i\in[3]$ exists such that  $\rho_{i+1}>0$ and  $\lambda_{i+1}>0$}. Then either $\rho_i>0$, all $i\in[3]$ or  $\lambda_i>0$, all $i\in[3]$, because (\ref{eqn:only_3_pos}) holds. These two subcases are chiral to each other.
  A similar argument to the one given in the former case shows that  the facet in $x_i=0$ is a 5--gon in
  $\DD$.   This 5--gon is left if and only if  $\lambda_i>0$, in which case $\epsilon_i>0$ and  $\lambda_i, \epsilon_i, \lambda_i+\epsilon_i, |a_{i+1,i}|$ and  $|a_{i+1,i-1}|$ are the edge--lengths.  This 5--gon is right if and only if  $\rho_i>0$ (see details in figure \ref{fig_07_types_N}, left and center).  \label{case:2}
\end{itemize}
In either case, the North Cask of $\DD$ consists of the three facets $x_i=0$, $i\in[3]$ as described above, all meeting at $\NN$, the origin.

According  to
definition \ref{dfn:Cask_type}, the possible North Cask types of $\DD$ are
\begin{itemize}
\item $(4.5.6)$ as well as all permutations of it: $(\tau(4).\tau(5).\tau(6))$, $\tau\in\Sigma_3$,
\item $(5.5.5)$  right,  if $\rho_i>0$, all $i\in [3]$, or  $(5.5.5)$  left, if $\lambda_i>0$, all $i\in [3]$, (see figure \ref{fig_06_N_Cask}).
\end{itemize}
This proves items \ref{item:un} and \ref{item:deu} in the proposition.
Vertices,  edges and facets   in the North Cask of $\DD$ are easily
counted  giving an $f$--vector equal to $(10,12,3)$\footnote{Note $1=10-12+3$, i.e., a North Cask has the  Euler characteristic of a closed disc.} (see figures  \ref{fig_06_N_Cask} and \ref{fig_07_types_N}).  This proves item \ref{item:cua}.
\end{proof}

\begin{rem}\label{rem:odds_and_ends}
\begin{enumerate}
\item In the previous proof, we have encountered two cases: \label{item:cases}
\begin{itemize}
\item $\{p,q,r\}=\{4,5,6\}$,
\item $6\notin\{p,q,r\}$  and $p=q=r=5$,
\end{itemize}
the second case splitting into two mutually chiral ones.
\item Any pentagon belonging to  a Polar Cask of type $(5.5.5)$ left (resp. right) is a \textbf{left}
(resp. \textbf{right}) \textbf{pentagon}.
 \item There are  exactly three positive numbers in the
 set $\{\epsilon_1,\epsilon_2,\epsilon_3,\delta_1,\delta_2,\delta_3\}$.
 \item In $\PP(A)$, the direction of an edge of tropical length $\rho_i$ or $\lambda_i$ (if any) is  $u_{i-1}+u_{i+1}$, all $i\in[3]$ (easy to check). \label{item:last}
\end{enumerate}
\end{rem}

\begin{rem}\label{rem:details}
Here we give some details about figure \ref{fig_07_types_N}, center. Recall that $a_{ij}\le0$. In the figure, we can see that
\begin{equation}
|a_{12}|-|a_{13}|>0 \text{\ and \ } |a_{23}|-|a_{21}|>0 \text{\ and \ } |a_{31}|-|a_{32}|>0,
\end{equation}
or equivalently
\begin{equation}
-a_{12}+a_{13}>0 \text{\ and \ } -a_{23}+a_{21}>0 \text{\ and \ } -a_{31}+a_{32}>0,
\end{equation}
which tell us that the quantities in  (\ref{eqn:3_2_minors}) are positive, all $i\in[3]$. In other words,  $\rho_i>0$, all
$i\in[3]$, whence $\lambda_i=-\infty$, because (\ref{eqn:only_3_pos}) holds.
By the proof of proposition \ref{prop:type_North_Cask}, this is the second case in Item \ref{item:cases} in Remark \ref{rem:odds_and_ends}, whence the type of the North Cask is (5.5.5) right (recall $\rho$ stands for right).

Geometrically, the effect on the bounding box $\BB$ of $\DD$ is canting the top left  edge $\ell_2$ of $\BB$ by cant parameter $\rho_2$, canting the  back
bottom edge $\ell_4$ by cant parameter $\rho_1$ and canting the right front edge $\ell_6$ by cant parameter $\rho_3$
(cf. figure \ref{fig_03_three_cubes}).
\end{rem}

Table \ref{table:types_N} summarizes the North Cask types of a maximal dodecahedron $\DD$, up to permutations in $\Sigma_3$, for a
matrix $A\in M_4^{VI}$.
\begin{table}[ht]
\begin{center}
\begin{tabular}{| >{$}l<{$} | >{$}l<{$} | >{$}l<{$} |}
\hline
\lambda_2>0,\quad \lambda_3>0,\quad \rho_3>0&  \NN \text{\ is\ } (4.5.6)&\lambda_2=-a_{34;21},\quad \lambda_3=-a_{14;32},\quad \rho_3=a_{24;13}\\
\hline
\rho_1>0,\quad \rho_2>0,\quad \rho_3>0& \NN \text{\ is\ } (5.5.5) \text{\ right}&\rho_1=a_{34;21},\quad \rho_2=a_{14;32},\quad \rho_3=a_{24;13}\\
\hline
\lambda_1>0,\quad \lambda_2>0,\quad \lambda_3>0& \NN \text{\ is\ } (5.5.5) \text{\ left}&\lambda_1=-a_{24;13},\quad \lambda_2=-a_{34;21},\quad \lambda_3=-a_{14;32}\\
\hline
\end{tabular}
\end{center}
\caption{Types of North Cask of a maximal alcoved dodecahedron. The first line shows only the identity permutation in $\Sigma_3$. Cf. figure \ref{fig_07_types_N}.}
\label{table:types_N}
\end{table}

Table \ref{table:edge_lengths} shows the edge--lengths of the North Cask in figure \ref{fig_07_types_N} right, where $A\in M_4^{VI}$: for $i=2$, we have $\lambda_3=-a_{14;32}=-a_{13}+a_{12}$, $\rho_3=a_{24;13}=-a_{23}+a_{21}$ and for $i=1$, we have $\lambda_2=-a_{34;21}=-a_{32}+a_{31}$ by (\ref{eqn:new_lambdas}).

\begin{table}[ht]
\begin{center}
\begin{tabular} {| c | c | c |}
\hline
Facet& p--gon & Edge--lenghts\\
\hline
$x_1=0$&4--gon&$|a_{21}|$, $|a_{31}|$\\
\hline
$x_2=0$&5--gon&$ \lambda_2,\epsilon_2, |a_{32}|, |a_{12}|, |a_{31}|$\\
\hline
$x_3=0$&6--gon&$|a_{21}|,|a_{12}|,\lambda_3,\epsilon_3,\delta_3,\rho_3$\\
\hline
\end{tabular}
\end{center}
\caption{Edge--lengths of the North Cask  of type $(4.5.6)$ of a maximal alcoved dodecahedron $\PP(A)$. The chirality word is   \textbf{left} (see figures \ref{fig_02_pentagon_left_and_right} and \ref{fig_07_types_N} right).}
\label{table:edge_lengths}
\end{table}

The previous proposition and corollary apply to visualized idempotent matrices, and
the corollary below applies to  NI matrices, which are  more general. The corollary follows from the invariance of 2--minors proved
in lemma \ref{lem:2_minors}\label{use:lem:2_minors_2}. The moral is that  \emph{the North Cask type can be read off from a NI matrix;
we do not need to visualize it}.\label{moral_2}

\begin{cor}\label{cor:type_North_Cask_NI}
For a maximal alcoved dodecahedron $\DD=\PP(A)$ with $A\in M_4^{NI}$,  the type  of the North Cask is  determined by the \textbf{signs of three 2--minors} of $A$:
\begin{equation}\label{eqn:2_minors_north}
a_{i,4; i-1,i+1}, \quad i\in [3].\qed
\end{equation}
\end{cor}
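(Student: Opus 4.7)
The plan is to reduce the statement to Proposition \ref{prop:type_North_Cask} via visualization, using the invariance of 2--minors.

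First, I would take $A\in M_4^{NI}$ and form its visualization $V=\leftidx{^D}A$ with $D=\diag(\row(A,4))$, which by Lemma \ref{lem:visualizing} lies in $M_4^{VI}$. Since visualization is a conjugation by a diagonal matrix, the associated polyhedron $\PP(V)$ is a translate of $\PP(A)$; in particular, it is again a maximal alcoved dodecahedron and its North Cask is congruent (and parallel-oriented) to that of $\DD=\PP(A)$, so the two North Cask types coincide.

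Next, I would apply Proposition \ref{prop:type_North_Cask} to the visualized matrix $V$: the North Cask type of $\PP(V)$ is determined by the signs of the three 2--minors $v_{i,4;i-1,i+1}$, $i\in[3]$. Finally, by the invariance of 2--minors under visualization (Lemma \ref{lem:2_minors}), we have
\begin{equation}
v_{i,4;i-1,i+1}=a_{i,4;i-1,i+1},\qquad i\in[3],
\end{equation}
so the signs of the 2--minors of $A$ listed in (\ref{eqn:2_minors_north}) coincide with those governing the North Cask type of $\PP(V)=\PP(A)$ (up to translation). This gives the claim.

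There is no serious obstacle here; the entire content is already contained in Proposition \ref{prop:type_North_Cask} and Lemma \ref{lem:2_minors}. The only point requiring a brief verification is that the translation $\PP(A)\mapsto\PP(V)$ preserves both the facet structure at the North Pole and the correspondence between coordinate facets $x_i=\mathrm{cnst}$ and the indices $i\in[3]$, so that the notion of Cask type (as in Definition \ref{dfn:Cask_type}) transfers unchanged.
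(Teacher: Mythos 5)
Your proposal is correct and follows exactly the paper's own route: visualize $A$ via Lemma \ref{lem:visualizing}, note that $\PP(\leftidx{^D}A)$ is a translate of $\PP(A)$ so Cask types are unchanged, apply Proposition \ref{prop:type_North_Cask}, and transfer back using the invariance of 2--minors (Lemma \ref{lem:2_minors}). Nothing essential differs from the argument in the paper.
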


We can  tell the type of the North Cask from the difference tuple, as the following corollary shows.
\begin{cor}\label{cor:type_North_Cask_VI}
For a maximal alcoved dodecahedron $\DD=\PP(A)$ with $A\in M_4^{VI}$,  the type  of the North Cask is  determined by the
\textbf{signs of the entries of the difference tuple of $A$ with even index}.
\end{cor}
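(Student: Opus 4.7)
The plan is to observe that the three 2--minors of $A$ singled out in Corollary \ref{cor:type_North_Cask_NI}, namely $a_{i,4;i-1,i+1}$ for $i\in[3]$ (indices mod $3$), are, up to overall sign, exactly the even-index entries $d_2,d_4,d_6$ of the difference tuple. Once this identification is made, the statement is an immediate consequence of Corollary \ref{cor:type_North_Cask_NI}, since replacing each $a_{i,4;i-1,i+1}$ by its negative does not alter the information carried by its sign (taking signs of all three at once just flips the labeling, but the type is invariant).

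Concretely, I would carry out the comparison as follows. Using the antisymmetry relations in (\ref{eqn:2_minors_2}), I can write $a_{1,4;3,2}=-a_{14;23}$, $a_{2,4;1,3}=-a_{24;31}$ and $a_{3,4;2,1}=-a_{34;12}$. On the other hand, formula (\ref{eqn:di_values}) gives
\begin{equation*}
d_2=-a_{14;23},\qquad d_4=-a_{34;12},\qquad d_6=-a_{24;31}.
\end{equation*}
Matching these two lists term by term shows that
\begin{equation*}
\sign\bigl(a_{1,4;3,2}\bigr)=\sign(d_2),\quad \sign\bigl(a_{2,4;1,3}\bigr)=\sign(d_6),\quad \sign\bigl(a_{3,4;2,1}\bigr)=\sign(d_4).
\end{equation*}
Hence the sign triple $(\sign a_{1,4;3,2},\sign a_{2,4;1,3},\sign a_{3,4;2,1})$ is, up to a permutation of coordinates, the triple $(\sign d_2,\sign d_4,\sign d_6)$.

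Finally, I would invoke Corollary \ref{cor:type_North_Cask_NI} (which applies a fortiori to $A\in M_4^{VI}\subset M_4^{NI}$): since the North Cask type is determined by the signs of $a_{i,4;i-1,i+1}$, $i\in[3]$, it is equivalently determined by the signs of $d_2,d_4,d_6$. The main (minor) obstacle is purely bookkeeping of the cyclic index convention in Definition \ref{dfn:cant_tuple} versus the convention in (\ref{eqn:2_minors_north}); once the index correspondence $i\leftrightarrow I=2,4,6$ is pinned down using (\ref{eqn:di_formulas_a}) and the antisymmetry (\ref{eqn:2_minors_2}), no further computation is needed, and the role of the odd-index entries $d_1,d_3,d_5$ (which pertain to the South Cask, once the Polar Exchange of Subsection \ref{subsec:transf} is applied) is correctly excluded.
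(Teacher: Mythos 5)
Your proof is correct and follows essentially the same route as the paper's: both identify $d_2,d_4,d_6$ with the 2--minors $a_{i,4;i-1,i+1}$ by applying the antisymmetry (\ref{eqn:2_minors_2}) to (\ref{eqn:di_values}) (the paper phrases this as the minus sign vanishing after permuting two column indices), and then invoke the determination result — the paper cites item \ref{item:un} of Proposition \ref{prop:type_North_Cask} directly, while you cite its NI consequence, Corollary \ref{cor:type_North_Cask_NI}, which is the same chain of reasoning.
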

\begin{proof} Rewrite (\ref{eqn:di_values}) as
\begin{equation}\label{eqn:di_par}
d_{I+1}=a_{i,4; i-1,i+1},  \text{\ with\ } I\in\{1,3,5\},\ i=I\mod 3 \text{\ and\  indices\  reduced\  modulo\  3\   in \ } a_{ij},
\end{equation}
noticing that the minus sign in (\ref{eqn:di_values}) has vanished because we have permuted two column
indices.  Now apply item \ref{item:un} in
proposition \ref{prop:type_North_Cask}.  \label{item:3_2_minors2}
\end{proof}
Explicitly, we have
\begin{equation}\label{eqn:di_par_2}
d_2=a_{14;32},\ d_4=a_{34;21},\ d_6=a_{24;13}.
\end{equation}

A simple count gives the following result.
\begin{cor}\label{cor:number_types_N}
The number of North Cask types of  maximal alcoved dodecahedra is $8=2+3!$ \qed
\end{cor}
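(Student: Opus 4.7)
The plan is to invoke item~\ref{item:deu} of Proposition~\ref{prop:type_North_Cask}, which already enumerates all possible North Cask types, and then simply count them. Indeed, that proposition tells us that the type of the North Cask of a maximal alcoved dodecahedron falls into exactly one of the following two families:
\begin{enumerate}
\item a triple of the form $(\tau(4).\tau(5).\tau(6))$ with $\tau\in\Sigma_3$, i.e.\ some permutation of the symbols $4,5,6$;
\item the triple $(5.5.5)$, equipped with a chirality word (either right or left).
\end{enumerate}

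First, I would observe that the three symbols $4,5,6$ are pairwise distinct, so the symmetric group $\Sigma_3$ acts freely on the triple $(4,5,6)$; hence the first family contributes exactly $|\Sigma_3|=3!=6$ distinct types. Second, the triple $(5.5.5)$ contributes exactly $2$ types, corresponding to the two possible chirality words (right and left), as established in cases \ref{case:1} and \ref{case:2} of the proof of Proposition~\ref{prop:type_North_Cask} and also recorded in Table~\ref{table:types_N}.

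The two families are disjoint (the first has $6\in\{p,q,r\}$, the second does not, as noted in item~\ref{item:cases} of Remark~\ref{rem:odds_and_ends}). Adding the two contributions gives $6+2=8$, as claimed.

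There is no real obstacle here: the work was already done in Proposition~\ref{prop:type_North_Cask}, and the corollary is just a bookkeeping statement. The only point one should check carefully is that the eight listed types are actually realized by some NI matrix $A$ (so that none of the cases is empty); but this is guaranteed by the existence of the difference tuple $d$ with prescribed signs on its even-indexed entries, via Corollary~\ref{cor:type_North_Cask_VI} and the explicit formulas~(\ref{eqn:di_par_2}): any choice of signs for $(d_2,d_4,d_6)\in\{\pm\}^3$ can be attained by a suitable perturbation of a box, producing $2^3=8$ sign patterns, which matches the count $2+3!$ after re-grouping (the three patterns with mixed signs that yield a permutation of $(4.5.6)$ account for $6$ types once one also varies which coordinate carries the $6$-gon).
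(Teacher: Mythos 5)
Your proof is correct and follows essentially the same route as the paper, which obtains the corollary by a simple count from item 2 of Proposition \ref{prop:type_North_Cask}: $3!=6$ permutation types plus $2$ chiral $(5.5.5)$ types. Only a minor slip in your closing aside: there are six (not three) mixed-sign patterns of $(d_2,d_4,d_6)$, one for each permutation of $(4.5.6)$, but this does not affect the main counting argument.
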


The next table follows from the proof of proposition \ref{prop:type_North_Cask}, recalling (\ref{eqn:only_3_pos}).
\begin{table}[ht]
\begin{center}
\begin{tabular}{|>{$}c<{$}|>{$}c<{$}|>{$}c<{$}|}
\hline
\sign(\rho_1,\rho_2,\rho_3,\lambda_1,\lambda_2,\lambda_3)&\sign(d_2,d_4,d_6)&\text{North\  Cask\ type}\\
\hline
(+++---)&(+++)&\NN\ (5.5.5) \text{\ right\ }\\
\hline
(---+++)&(---)&\NN\ (5.5.5) \text{\ left\ }\\
\hline
(--+-++)&(--+)&\NN\ (4.5.6)\\
\hline
(-+-++-)&(+--)&\NN\ (5.6.4)\\
\hline
(+--+-+)&(-+-)&\NN\ (6.4.5)\\
\hline
(-++-+-)&(+-+)&\NN\ (4.6.5)\\
\hline
(+-+--+)&(-++)&\NN\ (5.4.6)\\
\hline
(++-+--)&(++-)&\NN\ (6.5.4)\\
\hline
\end{tabular}
\end{center}
\caption{Types of North Cask in terms of the signs of the entries of the difference tuple with even index.}
\label{table:types_N_di}
\end{table}

\subsection{South Cask}\label{subsec:south_cask}
In order to study the South Cask of an alcoved polyhedron $\PP$, all we have to do is to turn $\PP$ around, by a
\textbf{Polar Exchange} rotation. We have three of them  $+(12), +(13), +(23)\in\GG_3$ and we can choose any one.

In this section, we  find the relations between the 2--minors and the difference tuples of $A$ and $S$, where the matrix $S$ is such
that the
North (South) Cask of $S$ is the South (North) Cask of $A$, i.e.,
\begin{equation}\label{eqn:S}
\sigma\in\{(12),(13),(23)\} \text{\ and \ }S=\leftidx{^\sigma}A^T.
\end{equation}

The next lemmas  and corollaries
show what happens to matrix entries and
to 2--minors under various matrix transformations. The proofs are straightforward. Recall the group action
$\GG_3\times M_4^{NI}\rightarrow M_4^{NI}$ described in
p.~\pageref{dfn:group_action_1}.

\begin{lem}\label{lem:sigma}Let $\sigma\in\{(12), (13), (23)\}$.
The  conjugate matrix $\leftidx{^{\sigma}}{A}=(c_{ij})$ of matrix  $A\in M_4^{NI}$   satisfies
\begin{equation}\label{eqn:conjugate}
c_{ij}=a_{\sigma(i)\sigma(j)}.\qed
\end{equation}
\end{lem}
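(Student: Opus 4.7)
The plan is to prove Lemma~\ref{lem:sigma} by direct calculation using the definitions of the tropical permutation matrix, tropical matrix multiplication $\odot$, and conjugation \eqref{eqn:conj}. Since the statement concerns only transpositions $\sigma \in \{(12), (13), (23)\}$, I will use the fact that every transposition is its own inverse.

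First, I would unpack the matrix $P^{(\sigma)}$: its $(i,m)$-entry equals $0$ when $m = \sigma(i)$ and equals $-\infty$ otherwise. Since $\sigma^{-1}=\sigma$ for a transposition, tropical inversion gives $(P^{(\sigma)})^{-1} = P^{(\sigma)}$, so the conjugate under $P^{(\sigma)}$ is $P^{(\sigma)}\odot A\odot P^{(\sigma)}$.

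Next, I would compute the two one-sided products. For left multiplication,
\[
(P^{(\sigma)}\odot A)_{ij} \;=\; \max_{m\in[4]}\bigl(P^{(\sigma)}_{im}+a_{mj}\bigr) \;=\; a_{\sigma(i),\,j},
\]
since the maximum is achieved at the unique $m = \sigma(i)$ where $P^{(\sigma)}_{im}=0$, all other summands being $-\infty$. Analogously, for right multiplication,
\[
(A\odot P^{(\sigma)})_{ij} \;=\; \max_{m\in[4]}\bigl(a_{im}+P^{(\sigma)}_{mj}\bigr) \;=\; a_{i,\,\sigma(j)},
\]
using that $P^{(\sigma)}_{mj}=0$ exactly when $m=\sigma^{-1}(j)=\sigma(j)$.

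Combining these two computations, applied in either order (associativity of $\odot$ is immediate from the definition),
\[
c_{ij}=(P^{(\sigma)}\odot A\odot P^{(\sigma)})_{ij}=a_{\sigma(i),\,\sigma(j)},
\]
which is the desired identity. There is no real obstacle here: the entire argument is bookkeeping with the $-\infty$ entries of $P^{(\sigma)}$, and the only point to be careful about is verifying that $(P^{(\sigma)})^{-1}=P^{(\sigma)}$ for transpositions so that one does not have to introduce $\sigma^{-1}$ separately.
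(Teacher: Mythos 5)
Your computation is correct and is exactly the routine entrywise bookkeeping the paper has in mind: the lemma is stated with \qed and the preceding remark that ``the proofs are straightforward,'' so no alternative argument is being compared against. Your extra check that $P^{(\sigma)}\odot P^{(\sigma)}=I$, so that $(P^{(\sigma)})^{-1}=P^{(\sigma)}$ and the conjugate $\leftidx{^{(\sigma)}}{A}=P^{(\sigma)}\odot A\odot P^{(\sigma)}$ is well defined, is a welcome touch, since the paper's earlier remark about invertible matrices mentions only diagonal ones.
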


\begin{cor}\label{cor:south}Let $\sigma\in\{(12), (13), (23)\}$.
If $A=(a_{ij})\in M_4^{VI}$  and $V=(v_{ij})\in M_4^{VI}$ is the visualization  of \ $\leftidx{^\sigma}A^T$,  then
\begin{equation}\label{eqn:entries_south}
v_{ij}=-a_{\sigma(i)\sigma(j);\sigma(i) 4}, \quad i\neq j\in[4], \quad v_{i4}=a_{\sigma(i) 4}, \quad i\in[3].
\end{equation}
 \end{cor}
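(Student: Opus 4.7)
The plan is to proceed by direct computation: write out the entries of $S:=\leftidx{^\sigma}A^T$ explicitly, then apply the standard visualization formula \eqref{eqn:ij_visualization} to $S$, and finally massage the result into the claimed 2--minor form using the definition \eqref{eqn:2_minors_1}.

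First I would compute the entries of $S$. By Lemma \ref{lem:sigma} applied to the matrix $A^T$, one gets $s_{ij}=(A^T)_{\sigma(i)\sigma(j)}=a_{\sigma(j)\sigma(i)}$. The transposition step is where an index swap enters, and it will be the only nontrivial bookkeeping in the proof; I would spell it out once and then use it repeatedly. In particular, recording that $\sigma$ fixes $4$, the fourth row and column of $S$ take the simple forms $s_{4j}=a_{\sigma(j)4}$ and $s_{i4}=a_{4\sigma(i)}$.

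Next I would apply formula \eqref{eqn:ij_visualization} (with $n=4$), which says that the visualization of any NI matrix $S$ has entries $v_{ij}=s_{4i}+s_{ij}-s_{4j}$. Substituting the expressions above gives
\begin{equation*}
v_{ij}=a_{\sigma(i)4}+a_{\sigma(j)\sigma(i)}-a_{\sigma(j)4}.
\end{equation*}
Inserting the trivial term $a_{\sigma(i)\sigma(i)}=0$ (because $A$ is zero--diagonal) and comparing with the definition of the 2--minor $a_{\sigma(i)\sigma(j);\sigma(i)4}=a_{\sigma(i)\sigma(i)}+a_{\sigma(j)4}-a_{\sigma(i)4}-a_{\sigma(j)\sigma(i)}$, one reads off $v_{ij}=-a_{\sigma(i)\sigma(j);\sigma(i)4}$, which is the first claim.

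Finally, for the last column I would specialize to $j=4$: then $v_{i4}=s_{4i}+s_{i4}-s_{44}$. Here $s_{44}=a_{44}=0$, and $s_{i4}=a_{4\sigma(i)}=0$ because $A\in M_4^{VI}$ has zero last row. What survives is $v_{i4}=s_{4i}=a_{\sigma(i)4}$, as required. There is no serious obstacle; the only delicate point is keeping track of the two index swaps (one coming from transposition, one from the permutation conjugation), which is why I would compute $s_{ij}$ once and for all before invoking the visualization formula.
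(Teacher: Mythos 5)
Your proposal is correct and follows essentially the same route as the paper: compute the entries $s_{ij}=a_{\sigma(j)\sigma(i)}$ of $\leftidx{^\sigma}A^T$ and then apply the visualization formula (\ref{eqn:ij_visualization}) (equivalently, (\ref{eqn:s_and_a})), recognizing the result as the 2--minor $-a_{\sigma(i)\sigma(j);\sigma(i)4}$. Your explicit treatment of the column $v_{i4}$, using $a_{4\sigma(i)}=0$ for $A\in M_4^{VI}$, is a detail the paper leaves implicit but is consistent with its argument.
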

 \begin{proof} The entries of $\leftidx{^\sigma}A^T$ are $b_{ij}=a_{\sigma(j)\sigma(i)}$ and the entries of
 $V$ are $v_{ij}=-b_{i4;ij}=-a_{\sigma(i)\sigma(j);\sigma(i)4}$,
 by (\ref{eqn:s_and_a}) and the properties of 2--minors.
 \end{proof}
 Notice:  the minus sign in (\ref{eqn:entries_south}) comes from visualization,  $\sigma$ acts on  subscripts and transposition causes exchange of row and column indices.

\begin{cor}[South Cask]\label{cor:type_South_Cask}
Let $\sigma\in\{(12), (13), (23)\}$. For a maximal alcoved dodecahedron $\DD=\PP(A)$ and  $A\in M_4^{NI}$, the type and edge--lengths of the South Cask
are  determined by the \textbf{signs of three 2--minors} of $A$:
\begin{equation}\label{eqn:2_minors_south}
a_{\sigma(i-1)\sigma(i+1);\sigma(i)4}, \quad i\in [3].
\end{equation}

\end{cor}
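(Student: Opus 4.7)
The plan is to reduce the South--Cask statement to the North--Cask statement already proved in Corollary \ref{cor:type_North_Cask_NI}, by exploiting a Polar Exchange. Fix $\sigma\in\{(12),(13),(23)\}$ and set $S:=+\sigma\cdot A=\leftidx{^\sigma}A^{T}$. Geometrically, $+\sigma\in\GG_3$ interchanges the two Poles (see p.~\pageref{dfn:polar_exch}), so the South Cask of $\PP(A)$ is carried bijectively onto the North Cask of $\PP(S)$, with all incidences, pentagonal facets and edge--lengths preserved.

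First I would verify that $S\in M_4^{NI}$, which follows from Lemma \ref{lem:transpose} together with the fact that conjugation by a tropical permutation matrix is a simultaneous relabelling of rows and columns and hence preserves the NI property. Then Corollary \ref{cor:type_North_Cask_NI} applied to $S$ says that the type of the North Cask of $\PP(S)$ is determined by the signs of the three $2$--minors $s_{i,4;\,i-1,i+1}$ for $i\in[3]$, where $s_{ij}$ is the $(i,j)$--entry of $S$. What remains is to re--express these three $2$--minors in terms of entries of $A$.

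For this, I would use Lemma \ref{lem:sigma} applied to $A^{T}$: combined with $\sigma(4)=4$, it gives $s_{ij}=a_{\sigma(j)\sigma(i)}$. Expanding by Definition \ref{dfn:2_minors} then produces
\[
s_{i,4;\,i-1,i+1}
=a_{\sigma(i-1)\sigma(i)}+a_{\sigma(i+1)\,4}-a_{\sigma(i-1)\,4}-a_{\sigma(i+1)\sigma(i)}
=a_{\sigma(i-1)\sigma(i+1);\,\sigma(i)\,4},
\]
which is exactly the $2$--minor of $A$ appearing in~\eqref{eqn:2_minors_south}. Since $2$--minors are invariant under visualization (Lemma \ref{lem:2_minors}), this identification holds for arbitrary $A\in M_4^{NI}$, not just for visualized $A$.

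For the edge--length claim I would pull back, under the same substitution $s_{ij}=a_{\sigma(j)\sigma(i)}$, the explicit North--Cask edge--length formulas collected in Table \ref{table:edge_lengths} (together with the $(5.5.5)$ formulas extracted from the proof of Proposition \ref{prop:type_North_Cask}); each such edge--length is a $2$--minor of $S$ and hence, via the substitution, a $2$--minor of $A$ indexed by $\sigma$, in perfect parallel with the visualization formula \eqref{eqn:entries_south} of Corollary \ref{cor:south}. The only real pitfall---and the step I expect to require the most care---is a bookkeeping check on signs: the minus sign introduced by visualization must be absorbed by a row swap in the $2$--minor via \eqref{eqn:2_minors_2}, exactly as was done in the passage from \eqref{eqn:di_values} to \eqref{eqn:di_par}. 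Once this sign check is made once and for all, every South--Cask quantity is read off from $A$ by the prescribed $\sigma$--permuted $2$--minor formula.
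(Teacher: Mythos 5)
Your proposal is correct and follows essentially the same route as the paper: reduce the South Cask to a North Cask via the Polar Exchange matrix $S=\leftidx{^\sigma}A^T$ and identify the relevant $2$--minors of $S$ with the $\sigma$--permuted $2$--minors of $A$ in (\ref{eqn:2_minors_south}). The only cosmetic difference is that the paper first visualizes $\leftidx{^\sigma}A^T$ (corollary \ref{cor:south}) and uses the cocycle relations (\ref{eqn:cocycle}), whereas you apply corollary \ref{cor:type_North_Cask_NI} directly to $S$ and obtain $s_{i,4;\,i-1,i+1}=a_{\sigma(i-1)\sigma(i+1);\sigma(i)4}$ by entry relabelling; both computations agree.
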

\begin{proof}
By lemma \ref{lem:2_minors}, \label{use:lem:2_minors_3} we can assume that $A$ is VI. Then,
if we rotate  the dodecahedron $\DD$ by the Polar Exchange rotation $+\sigma$, it corresponds to looking at  the matrix
$V\in M_4^{VI}$ in corollary \ref{cor:south}. The 2--minors we have to look at are  $v_{i4;i-1,i+1}=v_{i,i-1}-v_{i,i+1}=a_{\sigma(i-1)\sigma(i+1);\sigma(i)4}$,
the last equality by (\ref{eqn:entries_south}) and the cocycle relations (\ref{eqn:cocycle}).  Then  we
apply proposition \ref{prop:type_North_Cask}.
\end{proof}

Now we denote by $G_3$  the  image of the following well--defined group homomorphism
\begin{equation}
\Theta:\GG_3\rightarrow \Z_2 \times \Sigma_6
\end{equation}
which, on a set of generators is given by
\begin{equation}\label{eqn:una}
-\id\mapsto +(14)(25)(36)
\end{equation}
\begin{equation}
+(12)\mapsto -(26)(35)
\end{equation}
\begin{equation}
+(13)\mapsto -(15)(24)
\end{equation}
\begin{equation}
+(23)\mapsto -(13)(46)
\end{equation}
As a consequence, it is easily checked\footnote{Notice the signs in (\ref{eqn:una})--(\ref{eqn:cinco})!} that
\begin{equation}\label{eqn:cinco}
-(23)\mapsto -(16)(25)(34)
\end{equation}

\label{dfn:group_action_3}
The group $G_3$ is an isomorphic copy of $\GG_3$ (easy to check).
Two elements in $G_3$ are of particular relevance: $+(14)(25)(36)$, which is called the
\textbf{three--position shift}, \label{name:3_pos_shift} and
$-(16)(25)(34)$ which is called the \textbf{negated reverse}.\label{name:neg_reverse}

\label{dfn:group_action_2}
We have the \textbf{natural group action} of $G_3\times \R^6\rightarrow\R^6$: for example,
$-(16)(25)(34)\cdot (k_1,k_2,\ldots,k_6)=-(k_6,k_5,\ldots,k_1)$. The isomorphism $\Theta$ induces the  \textbf{group action}
$\GG_3\times \R^6 \rightarrow \R^6$: for example,
$-(23)\cdot (k_1,k_2,\ldots,k_6)=\Theta(-(23))\cdot (k_1,k_2,\ldots,k_6)=-(k_6,k_5,\ldots,k_1)$, by (\ref{eqn:cinco}).

Recall definition \ref{dfn:cant_tuple}.
\begin{lem}[Action of $\GG_3$ on difference tuples]\label{lem:action_on_difference}
If $E$ is a $4\times 4$ perturbation matrix and $d^E$ is the corresponding difference tuple, then
\begin{enumerate}
\item the difference tuple of  matrix $E^T$ is the three--position shift of $d^E$,
\item the difference tuple of  matrix $\leftidx{^{(23)}}E$ is equal to $-(13)(46)\cdot d^E$,
\item the difference tuple of  matrix $\leftidx{^{(23)}}E^T$ is the reverse negated of $d^E$. \label{item:three}
\end{enumerate}
\end{lem}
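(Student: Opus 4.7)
The plan is to prove all three items by direct computation from Definition \ref{dfn:cant_tuple} and Lemma \ref{lem:sigma}. The cant tuple $c^E$ records the six off-diagonal entries of $E$ in rows/columns $1$--$3$, traversed in the crocked order $(2,3),(1,3),(1,2),(3,2),(3,1),(2,1)$. For each matrix operation I would first identify the permutation of these six slots induced on the cant tuple, and then take cyclic first differences (with $c_7=c_1$) to read off the difference tuple.

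For item (1), since $(E^T)_{ij}=e_{ji}$, transposing $E$ swaps slot $(i,j)$ with slot $(j,i)$; in the ordering of Definition \ref{dfn:cant_tuple} this identifies slots $1 \leftrightarrow 4$, $2 \leftrightarrow 5$, $3 \leftrightarrow 6$. Hence the cant tuple of $E^T$ is $(c_4,c_5,c_6,c_1,c_2,c_3)$, a cyclic shift by three. Since $d_i=c_{i+1}-c_i$ cyclically, the same three-position cyclic shift carries over to the difference tuple, which is precisely $+(14)(25)(36)\cdot d^E$.

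For item (2), Lemma \ref{lem:sigma} with $\sigma=(23)$ gives $(\leftidx{^{(23)}}E)_{ij}=e_{\sigma(i)\sigma(j)}$. Plugging this into the crocked list produces the cant tuple $(c_4,c_3,c_2,c_1,c_6,c_5)$---a ``fold'' of $c^E$. Computing the six cyclic differences yields $(-d_3,-d_2,-d_1,-d_6,-d_5,-d_4)$, which I would then verify by inspection to agree with $-(13)(46)\cdot d^E$ under the natural $G_3$ action on $\R^6$.

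Item (3) then admits either of two routes. Directly: $(\leftidx{^{(23)}}E^T)_{ij}=e_{\sigma(j)\sigma(i)}$ gives the cant tuple $(c_1,c_6,c_5,c_4,c_3,c_2)$, whose cyclic differences are $(-d_6,-d_5,-d_4,-d_3,-d_2,-d_1)$, the reverse negated of $d^E$. Alternatively, I would apply item (2) to the matrix $E^T$ and combine with item (1), checking that $-(13)(46)\cdot(d_4,d_5,d_6,d_1,d_2,d_3)$ produces the same reverse-negated tuple. The only real obstacle is careful bookkeeping: one must keep track of the crocked traversal order of Definition \ref{dfn:cant_tuple} and remember the cyclic convention $c_7=c_1$ in the computation of $d_6$, since a single misidentification of slots propagates through all six differences.
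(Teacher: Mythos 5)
Your computation is correct and follows essentially the same route as the paper: write down how each matrix operation permutes the six entries in the crocked order, read off the transformed cant tuple, and take cyclic first differences (the paper carries this out explicitly only for item (3), treating the others as "proved similarly", exactly as you do). Your cant tuples $(c_4,c_5,c_6,c_1,c_2,c_3)$, $(c_4,c_3,c_2,c_1,c_6,c_5)$ and $(c_1,c_6,c_5,c_4,c_3,c_2)$ and the resulting difference tuples all check out against the paper's.
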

\begin{proof}
We prove  item \ref{item:three}, the rest being proved similarly. The cant tuple of $E$ is
$c^E=(e_{23},e_{13}, e_{12}, e_{32}, e_{31}, e_{21})$ and the cant tuple of $\leftidx{^{(23)}}E^T$ is
$c=(e_{23},e_{21}, e_{31}, e_{32}, e_{12}, e_{13})$. The difference tuple of $E$ is
$d^E=(e_{13}-e_{23},e_{12}-e_{13}, e_{32}-e_{12}, e_{31}-e_{32}, e_{21}-e_{31}, e_{23}-e_{21})$ and
the difference tuple of $\leftidx{^{(23)}}E^T$ is
 precisely the reverse negated of $d^E$.
\end{proof}

\begin{rem}[Nice Polar Exchange]\label{rem:nice}
The Polar Exchange $+(23)\in\GG_3$ is particularly nice in order to compute the difference tuple of the matrix $S=\leftidx{^{(23)}}A^T$
defined in (\ref{eqn:S}): by item \ref{item:three} in lemma \ref{lem:action_on_difference},
all we have to do is to work on matrix $E$ and follow the crocked closed path  given in definition in p.~\pageref{com:crocked} but \textbf{backwards}.
\end{rem}

\begin{cor}\label{cor:type_South_Cask_VI}
For a maximal alcoved dodecahedron $\DD=\PP(A)$ with $A\in M_4^{VI}$ the type  of the South Cask
is  determined by the \textbf{signs of the entries of the difference tuple of $A$ with odd index}.
\end{cor}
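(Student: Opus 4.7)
The plan is to mirror the strategy of Corollary \ref{cor:type_North_Cask_VI}, but now applied to the South Cask by first rotating the dodecahedron to bring the South Pole to the North Pole. The most convenient tool is the \emph{nice Polar Exchange} $+(23)\in\GG_3$ singled out in Remark \ref{rem:nice}; the essence of that remark is precisely that conjugation-and-transposition by $(23)$ interchanges the roles of odd- and even-indexed entries of the difference tuple.

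Concretely, I would invoke Corollary \ref{cor:type_South_Cask} with $\sigma=(23)$, which asserts that the South Cask type is governed by the signs of the three 2-minors
\[
a_{\sigma(i-1)\sigma(i+1);\sigma(i)4}, \qquad i\in [3].
\]
Using $\sigma(1)=1$, $\sigma(2)=3$, $\sigma(3)=2$, together with the antisymmetry relations (\ref{eqn:2_minors_2}) to bring each minor into canonical form, a direct evaluation produces exactly the three minors
\[
a_{23;14}, \qquad a_{12;34}, \qquad a_{31;24}.
\]
Comparing with the explicit closed-form expressions (\ref{eqn:di_values}) for the odd-indexed entries of the difference tuple,
\[
d_1=-a_{12;34}, \quad d_3=-a_{31;24}, \quad d_5=-a_{23;14},
\]
one sees that these three governing 2-minors are precisely $-d_5,\,-d_1,\,-d_3$, so their sign triple carries the same information as $(\sign d_1,\sign d_3,\sign d_5)$, which is the claim.

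An essentially equivalent, slightly more conceptual route proceeds through the matrix $S:=\leftidx{^{(23)}}A^T$ directly: by Lemma \ref{lem:action_on_difference}(\ref{item:three}) its difference tuple is the reverse-negation of $d^A$, so $d_2^S=-d_5$, $d_4^S=-d_3$, $d_6^S=-d_1$; applying Corollary \ref{cor:type_North_Cask_VI} to $S$ (whose North Cask coincides with the South Cask of $\PP(A)$, after invoking Lemma \ref{lem:2_minors} to pass to the visualization of $S$) yields the same conclusion. I expect no genuine obstacle: the work amounts to symbolic bookkeeping to match the shuffled indices of Corollary \ref{cor:type_South_Cask} with the explicit labeling of the difference tuple in (\ref{eqn:di_values}); no new geometric input is needed beyond material already established.
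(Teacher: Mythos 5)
Your proposal is correct and matches the paper's argument: your "more conceptual" second route (pass to $S=\leftidx{^{(23)}}A^T$, use item \ref{item:three} of lemma \ref{lem:action_on_difference} to get $d_2^S=-d_5$, $d_4^S=-d_3$, $d_6^S=-d_1$, then apply corollary \ref{cor:type_North_Cask_VI}) is exactly the proof given in the paper. Your primary route is just an equivalent shortcut that reads the same three 2--minors off corollary \ref{cor:type_South_Cask} with $\sigma=(23)$ and identifies them with $-d_5,-d_1,-d_3$ via (\ref{eqn:di_values}), so no genuinely new ingredient is involved.
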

\begin{proof}
We choose the Polar Exchange $+(23)\in\GG_3$ and consider the matrix $S=\leftidx{^{(23)}}A^T$. From  $A=B-E$, it follows that
$S=B'- \leftidx{^{(23)}}E^T$, where both $B$ and $B'=\leftidx{^{(23)}}B^T$ are box matrices. By definition \ref{dfn:perturbation_of_NI}
and  item \ref{item:three} in
lemma \ref{lem:action_on_difference},   the
difference tuples  $d^A=d^E$, $d^S=d^{\leftidx{^{(23)}}E^T}$ satisfy
\begin{equation}\label{eqn:d}
d_2^S=-d_5^A,\quad d_4^S=-d_3^A,\quad d_6^S=-d_1^A
\end{equation}
and the result follows from corollary \ref{cor:type_North_Cask_VI}.
\end{proof}

From corollary \ref{cor:number_types_N} we immediately get the following.
\begin{cor}\label{cor:number_types_S}
The number of  South Cask types of a maximal alcoved dodecahedron
is $8$. \qed
\end{cor}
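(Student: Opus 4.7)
The plan is to bootstrap directly from Corollary~\ref{cor:number_types_N} via a Polar Exchange. First I would fix any $\sigma\in\{(12),(13),(23)\}$ and, given $A\in M_4^{NI}$ representing a maximal alcoved dodecahedron, form $S=\leftidx{^{\sigma}}A^T$. By the $\GG_3$-action on $M_4^{NI}$ set up on p.~\pageref{dfn:group_action_1}, one has $S\in M_4^{NI}$, and geometrically $\PP(S)$ is the image of $\PP(A)$ under the orientation-preserving cube symmetry $+\sigma$, which interchanges the two Poles (p.~\pageref{dfn:polar_exch}). Consequently the South Cask of $\PP(A)$ is carried isometrically, and in particular combinatorially, onto the North Cask of $\PP(S)$. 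This is exactly the content already exploited in Corollary~\ref{cor:type_South_Cask} and Remark~\ref{rem:nice}, so the machinery needed is already in place.

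Second, because $A\mapsto \leftidx{^{\sigma}}A^T$ is a bijection of $M_4^{NI}$ (its inverse is obtained by applying the same operation, using that $\sigma^2=\id$ and $(A^T)^T=A$), and because it preserves the $f$-vector of the associated polyhedron (a cube symmetry composed with the antipodal map of Corollary~\ref{cor:symmetry} cannot alter the number of vertices, edges or facets), it restricts to a bijection on the subset of $M_4^{NI}$ representing maximal dodecahedra. Hence the set of realizable South Cask types of maximal $\PP(A)$ is in bijection with the set of realizable North Cask types of maximal $\PP(S)$, and Corollary~\ref{cor:number_types_N} gives exactly $8=2+3!$ of the latter.

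The only point requiring care is verifying that the bijection $A\mapsto S$ neither loses nor spuriously introduces a type: one must check that maximality is preserved under $\leftidx{^{\sigma}}(\cdot)^T$, and that every North type appearing on some maximal $\PP(S)$ does come from a maximal $\PP(A)$. Both follow immediately from $\sigma^2=\id$ and the equivariance of the construction with respect to the $\GG_3$-action, so I do not expect any serious obstacle beyond this bookkeeping.
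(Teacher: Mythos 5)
Your proposal is correct and follows the same route as the paper: the paper derives this corollary ``immediately'' from Corollary~\ref{cor:number_types_N}, the implicit mechanism being exactly the Polar Exchange correspondence $A\mapsto\leftidx{^{\sigma}}A^T$ already established in the South Cask subsection (Corollary~\ref{cor:type_South_Cask}), which you have simply made explicit. The extra bookkeeping you flag (involutivity and preservation of maximality) is fine and uncontroversial.
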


The next table follows from table \ref{table:types_N_di} and (\ref{eqn:d}). Note that any Polar Exchange  transforms a left pentagon into a right pentagon  (see figure \ref{fig_02_pentagon_left_and_right}).

\begin{table}[ht]
\begin{center}
\begin{tabular}{|>{$}c<{$}|>{$}c<{$}|}
\hline
-\sign(d^A_5,d^A_3,d^A_1)=\sign(d^S_2,d^S_4,d^S_6)&\text{South\  Cask\ type}\\
\hline
(+++)&\SSSS\ (5.5.5) \text{\ right\ }\\
\hline
(---)&\SSSS\ (5.5.5) \text{\ left\ }\\
\hline
(--+)&\SSSS\ (4.5.6)\\
\hline
(+--)&\SSSS\ (5.6.4)\\
\hline
(-+-)&\SSSS\ (6.4.5)\\
\hline
(+-+)&\SSSS\ (4.6.5)\\
\hline
(-++)&\SSSS\ (5.4.6)\\
\hline
(++-)&\SSSS\ (6.5.4)\\
\hline
\end{tabular}
\end{center}
\caption{Types of South Cask in terms of the signs of the entries of the difference tuple with odd index.}
\label{table:types_S_di}
\end{table}

Let us finish this subsection by giving a quick method to find the type of  a South Cask.
Recall that we have rules to label vertices  (see p.~\pageref{dfn:labels} and definitions \ref{dfn:labels_box}
and \ref{dfn:more_labels}). In particular, non--principal vertices  are labeled  $\underline{ij}$ and $\underline{ji}$, with $\underline{ij}\neq\underline{ji}$,
in general.
Notice that, due to the location of columns  given in lemma \ref{lem:geom_interpret},
\label{com:geom_interpret}  vertex  labels in the North Cask follow the same cyclic sequence, no matter the alcoved polyhedron: going counterclockwise around $\NN$, we visit $\underline{1}$,
$\underline{12}$, $\underline{21}$, $\underline{2}$, $\underline{23}$, $\underline{32}$, $\underline{3}$, $\underline{31}$
and $\underline{13}$, in this order. However, the  cyclic vertex sequence in the South Cask, denoted $X$, varies from polyhedron to polyhedron.
For instance $X$ may be equal to $\underline{14},\underline{41},\underline{124},\underline{24},\underline{42},\underline{234},\underline{34},\underline{43},\underline{134}$. There are $8=2^3$   possible sequences $X$,
because $\underline{i4}$ can be preceded or followed by $\underline{4i}$, all $i\in [3]$, as we go around the South Pole
$\SSSS$  clockwise. Each such sequence can be shortened, by omitting the  3--generated vertices; for example,  the former $X$ is shortened to $\underline{14},\underline{41},\underline{24},\underline{42},\underline{34},\underline{43}$.

For a maximal alcoved polyhedra $\PP(A)$, look at the (shortened) sequence $X=X(\PP(A))$.
For $i\in[3]$, it is easy to check that the direction of the segment $s_i$ joining vertices $\underline{4i}$ and $\underline{i4}$
in $\PP(A)$ is
either $u_{i-1}+u_i$ or
$u_{i+1}+u_i$, depending on whether $\underline{i4}$ is preceded or followed by $\underline{4i}$ in $X$.
Since the South Cask type of $\PP(A)$ only depends on the directions of the three segments $s_i$, $i\in[3]$, then it only depends on $X$; see figure \ref{fig_08_types_S}.

The following table shows all the (shortened) $X$ sequences and the corresponding types of South Cask. The sequence $X$ can have zero, one, two or  three inversions.\footnote{We say that $\underline{4i}, \underline{i4}$ gives one inversion, and $\underline{i4}, \underline{4i}$ gives no inversion.}

\begin{table}[ht]
\begin{center}
\begin{tabular}{|>{$}c<{$}|>{$}c<{$}|>{$}c<{$}|}
\hline
\underline{14}\   \underline{41}\   \underline{24}\   \underline{42}\   \underline{34}\   \underline{43}\  & \SSSS (5.5.5) \text{\ left\ }\ &0\\
\hline
\underline{14}\   \underline{41}\   \underline{24}\   \underline{42}\   \underline{43}\   \underline{34}\  & \SSSS (4.6.5)&1\\
\hline
\underline{14}\   \underline{41}\   \underline{42}\   \underline{24}\   \underline{34}\   \underline{43}\  & \SSSS (6.5.4)&1\\
\hline
\underline{41}\   \underline{14}\   \underline{24}\   \underline{42}\   \underline{34}\   \underline{43}\  & \SSSS (5.4.6)&1\\
\hline
\underline{14}\   \underline{41}\  \underline{42}\    \underline{24}\   \underline{43}\   \underline{34}\  & \SSSS (5.6.4)&2\\
\hline
\underline{41}\   \underline{14}\   \underline{24}\   \underline{42}\   \underline{43}\   \underline{34}\  & \SSSS (4.5.6)&2\\
\hline
\underline{41}\   \underline{14}\   \underline{42}\   \underline{24}\   \underline{34}\   \underline{43}\  & \SSSS (6.4.5)&2\\
\hline
\underline{41}\   \underline{14}\   \underline{42}\   \underline{24}\   \underline{43}\   \underline{34}\  & \SSSS (5.5.5)\text{\ right\ }&3\\
\hline
\end{tabular}
\end{center}
\caption{The first column shows the (shortened) sequences $X$, the second column shows the corresponding  South Cask type and  the third column shows the number of inversions.}
\label{table:X}
\end{table}

\begin{figure}[ht]
 \centering
  \includegraphics[width=9cm]{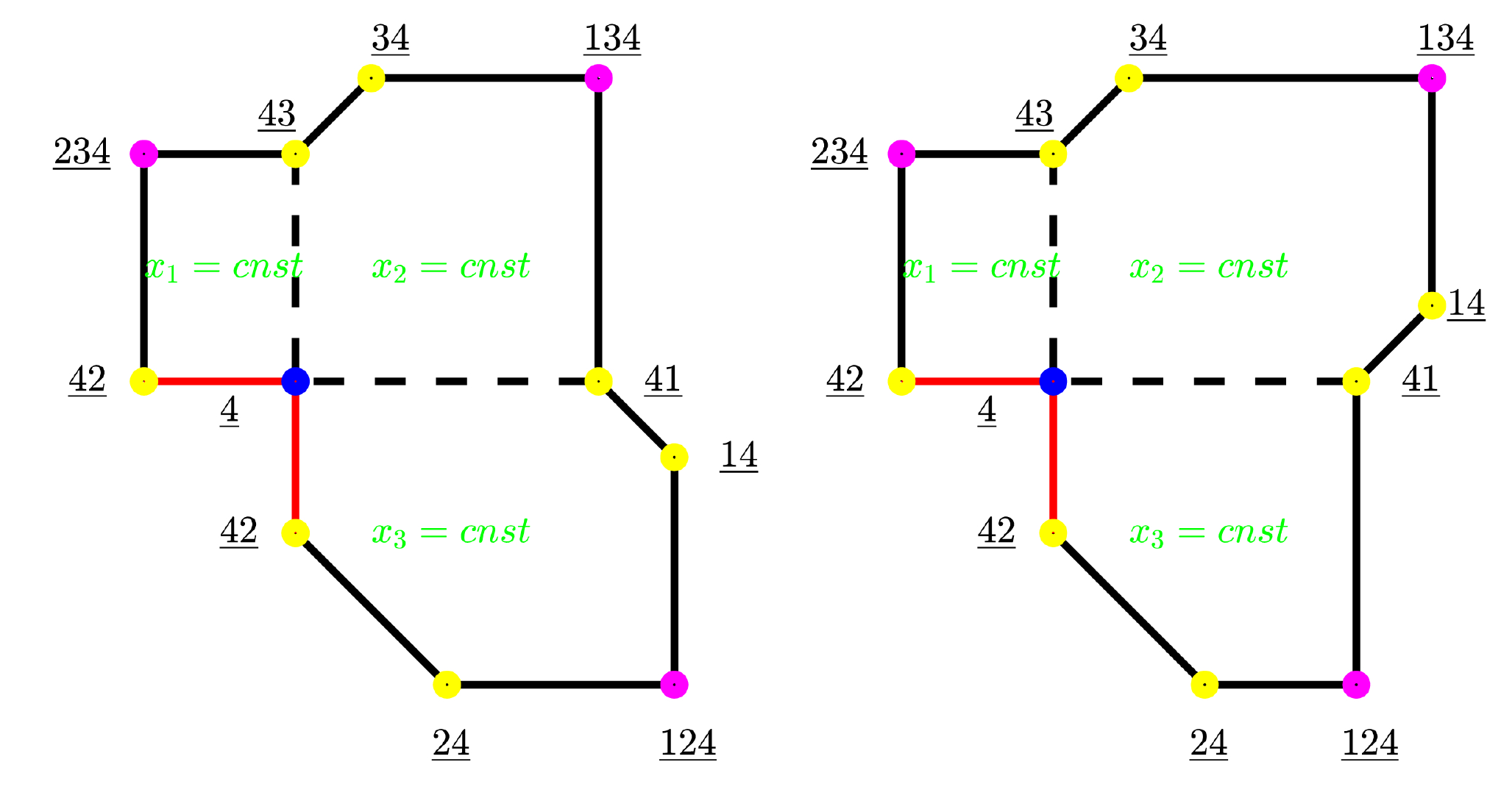}\\
  \caption{Two South Casks. Dashed segments are meant to be folded down. Red segments are meant to be glued together. The direction of
  the segment $s_1$ is $u_1+u_2$ (left)  and $u_1+u_3$ (right).
  The South Cask is $(4.5.6)$  (left) and  $(4.6.5)$  (right). The shortened sequence $X$ is $\underline{41},\underline{14},\underline{24},\underline{42},\underline{43},\underline{34}$ (left) and  $\underline{14},\underline{41},\underline{24},\underline{42},\underline{43},\underline{34}$ (right), going clockwise.}\label{fig_08_types_S}
\end{figure}

\subsection{The effect of Polar Exchange  on Cask types}
For $\sigma\in\{(12),(13),(23)\}$,
the Polar Exchange $+\sigma\in\GG_3$  transforms  the facet $x_i=1$ into  the facet $x_j=-1$ and the
facet $x_k=1$ into  the facet $x_k=-1$ in the cube $\QQ$, when $\sigma=(ij)$. Note that any Polar Exchange  transforms a left pentagon into a right pentagon. The next corollary  follows.

\begin{cor}\label{cor:effects_of_psi}
For $A\in M_4^{NI}$ and $\sigma\in\{(12),(13),(23)\}$,
\begin{enumerate}
\item if  $\NN$ is $(5.5.5)$ right in $\PP(A)$, then  $\SSSS$ is $(5.5.5)$ left in  the rotated polyhedron $+\sigma(\PP(A))$, \label{item:1}
\item if  $\NN$ is $(p.q.r)$ in $\PP(A)$, then  $\SSSS$ is $(\sigma(p).\sigma(q).\sigma(r))$ in  the rotated polyhedron $+\sigma(\PP(A))$. \label{item:2} \qed
\end{enumerate}
\end{cor}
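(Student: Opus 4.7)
My plan is to deduce the corollary directly from the two geometric facts stated in the paragraph immediately preceding it, namely: $+\sigma=+(ij)$ sends the facet $\{x_i=\mathrm{cnst}\}$ to $\{x_j=-\mathrm{cnst}\}$, sends $\{x_j=\mathrm{cnst}\}$ to $\{x_i=-\mathrm{cnst}\}$, and sends $\{x_k=\mathrm{cnst}\}$ to $\{x_k=-\mathrm{cnst}\}$ for the remaining index $k\in[3]\setminus\{i,j\}$; and every Polar Exchange turns a left pentagon into a right pentagon (and vice versa).

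First I would set up the common frame. Since $+\sigma$ is an isometry of $\QQ$ that swaps $\NN$ and $\SSSS$, the new South Pole of $+\sigma(\PP(A))$ equals $+\sigma(\NN)$, and the new South Cask is precisely the $+\sigma$-image of the old North Cask of $\PP(A)$. Being an isometry, $+\sigma$ preserves the number of edges of each facet; only the chirality (for pentagons) and the coordinate hyperplane containing each facet can change.

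Item (1) is then immediate: a $(5.5.5)$ right North Cask is three right pentagons meeting at $\NN$, and their $+\sigma$-images are three left pentagons meeting at $+\sigma(\NN)=\SSSS_{\mathrm{new}}$, so $+\sigma(\PP(A))$ has South Cask $(5.5.5)$ left.

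For item (2), I would write $(p.q.r)=(p_1.p_2.p_3)$, where $p_m$ is the gon-count of the facet $\{x_m=\mathrm{cnst}\}$ at $\NN$. By the swap rule above, that facet is carried to $\{x_{\sigma(m)}=-\mathrm{cnst}\}$ at the new $\SSSS$, and its gon-count $p_m$ is preserved. Reading off the new South Cask in positions $1,2,3$, the facet in position $n$ is a $p_{\sigma^{-1}(n)}$-gon; since $\sigma$ is an involution, this is a $p_{\sigma(n)}$-gon, so the new South Cask has type $(p_{\sigma(1)}.p_{\sigma(2)}.p_{\sigma(3)})=(\sigma(p).\sigma(q).\sigma(r))$ in the paper's convention.

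I do not expect a serious obstacle: the only subtlety, and the one place where a careless reader can go astray, is the notational convention for how $\sigma\in\Sigma_3$ acts on the triple $(p,q,r)$ (whose entries live in $\{4,5,6\}$). The correct reading is that $\sigma$ permutes the \emph{positions} of the triple, which is forced by the geometry above and is consistent with Table \ref{table:eight_classes}; for instance, the pair $\NN(4.5.6)$, $\SSSS(5.4.6)$ in class $QE5$ corresponds to $\sigma=(12)$, which swaps the first two positions.
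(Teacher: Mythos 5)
Your argument is correct and is essentially the paper's own: the corollary is stated there as an immediate consequence of the two facts you invoke (the Polar Exchange $+(ij)$ carries the facet $x_i=\mathrm{cnst}$ to $x_j=-\mathrm{cnst}$, fixes the remaining coordinate facet, and turns left pentagons into right ones and vice versa), with no further proof given. Your careful bookkeeping of how $\sigma$ permutes the positions of $(p.q.r)$ matches the paper's intended convention, so there is nothing to add.
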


\subsection{North and South Casks}
In this section we bring together previous results and give an example on how to compute the Cask types from the difference tuple.
The following comes from corollaries \ref{cor:type_North_Cask_VI}, \ref{cor:type_South_Cask_VI} and definition \ref{dfn:perturbation_of_NI}.

\begin{cor}[Polar types determined by $\sign(d)$]\label{cor:type_global_2}
For a maximal alcoved dodecahedron $\PP(A)=\DD$ and  $A\in M_4^{NI}$, the types of the North and South Casks
are determined by  the signs  of the difference tuple $d\in\R^6$.
More precisely, $\sign(d_I)$, with $I$ even,  determine the type of the North Cask, and
$\sign(d_I)$, with $I$ odd, determine the type of the South Cask. \qed
\end{cor}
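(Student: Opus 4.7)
The plan is to reduce the NI case to the already--handled VI case by invoking the visualization construction, which is a conjugation and therefore corresponds geometrically to a mere translation of the polyhedron.

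First, I would recall that, by Definition \ref{dfn:perturbation_of_NI}, the difference tuple $d$ of a matrix $A\in M_4^{NI}$ is by fiat the difference tuple of its visualization $V=\leftidx{^D}A\in M_4^{VI}$, where $D=\diag(\row(A,4))$. So the quantities $d_1,\ldots,d_6$ appearing in the statement are intrinsically those of $V$, not new invariants.

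Next, I would observe that passing from $A$ to $V$ corresponds to translating $\PP(A)$: since $V=D\odot A\odot D^{-1}$ with $D$ diagonal, the tropical column spans satisfy $\tconv(V)=D\odot \tconv(A)$, which after intersection with $\{x_n=0\}$ produces a classical translate of $\PP(A)$. Translations preserve all combinatorial and metric data of the Polar Casks (facet shapes, dihedral angles, chirality), so $\PP(A)$ and $\PP(V)$ have identical North Cask types and identical South Cask types. In particular, $\PP(A)$ is maximal if and only if $\PP(V)$ is.

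With these two reductions in hand, the statement follows by direct application of the corollaries already proved for visualized idempotent matrices: Corollary \ref{cor:type_North_Cask_VI} says that the signs of $d_2, d_4, d_6$ (the even--indexed entries of the difference tuple of $V$) determine the North Cask type of $\PP(V)$, and Corollary \ref{cor:type_South_Cask_VI} says that the signs of $d_1, d_3, d_5$ determine the South Cask type of $\PP(V)$. Transporting these conclusions along the translation $\PP(V)\to \PP(A)$ yields the corollary. The only step that could be regarded as delicate is the identification of the difference tuples, but this is built into Definition \ref{dfn:perturbation_of_NI}, and consistency is further guaranteed by Lemma \ref{lem:2_minors}, which assures us that the 2--minors computing the $d_I$ are invariant under the visualization conjugation; so no real obstacle arises and the proof is essentially bookkeeping.
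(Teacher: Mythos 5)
Your proposal is correct and follows essentially the same route as the paper: the statement is obtained by combining Definition \ref{dfn:perturbation_of_NI} (the difference tuple of an NI matrix is that of its visualization) with Corollaries \ref{cor:type_North_Cask_VI} and \ref{cor:type_South_Cask_VI}, the translation/conjugation remark and Lemma \ref{lem:2_minors} being exactly the supporting facts the paper relies on.
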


\begin{ex}\label{ex:example}
Consider the VI matrix  $A$ in example 2 in p.~\pageref{ex:QE2}. The box is a cube of edge--length 8  and $A=B-E$, where $E=\left(\begin{array}{rrrr}
0&-4&-3&0\\-5&0&-2&0\\-6&-5&0&0\\0&0&0&0\\
\end{array} \right)$, $c=(-2,-3,-4,-5,-6,-5)$, $d=(-1,-1,-1,-1,1,3)$ are the cant and difference tuples of $A$.
 To determine the  North Cask type of   $\PP(A)$ we use
 $d_2<0$, $d_4<0$, $d_6>0$,
whence the North Cask type is   $(4.5.6)$, by line 3 in table \ref{table:types_N_di}. To determine the  South Cask type of   $\PP(A)$ we use
$-\sign(d_5,d_3,d_1)=(-++)$,  whence the South Cask type is   $(5.4.6)$, by line 7 in table \ref{table:types_S_di}.
\end{ex}

\begin{cor}[Two impossible cases]\label{cor:impossible}
For a matrix $A\in M_4^{VI}$, it is impossible to have $\NN$  $(5.5.5)$ right and $\SSSS$  $(5.5.5)$ right. Similarly, it is impossible
to have $\NN$  $(5.5.5)$ left and $\SSSS$  $(5.5.5)$ left.
\end{cor}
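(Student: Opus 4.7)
The plan is to derive a contradiction by transferring both hypotheses to the Polar Exchange $\PP(S)=+(23)(\PP(A))$ and comparing the outcome with Corollary~\ref{cor:effects_of_psi}. I would prove the first assertion in detail; the second (both $(5.5.5)$ left) then follows by applying the first to $A^T$ and invoking Corollary~\ref{cor:symmetry}, since the antipodal map $-\id$ reverses chirality and swaps the two poles.

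Suppose $\PP(A)$ is a maximal dodecahedron with both $\NN$ and $\SSSS$ of type $(5.5.5)$ right. By Table~\ref{table:types_N_di} the condition on $\NN$ is equivalent to $\sign(d_2,d_4,d_6)=(+,+,+)$, and by Table~\ref{table:types_S_di} the condition on $\SSSS$ is equivalent to $\sign(d_1,d_3,d_5)=(-,-,-)$; combined, the difference tuple of $A$ has the alternating pattern $\sign(d)=(-,+,-,+,-,+)$. Setting $S:=\leftidx{^{(23)}}A^T$, the polyhedron $\PP(S)$ is again a maximal alcoved dodecahedron, and by Lemma~\ref{lem:action_on_difference} item 3 one has $d^S=(-d_6,-d_5,-d_4,-d_3,-d_2,-d_1)$, i.e.\ the reverse negated of $d$. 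Since the alternating pattern is fixed by reverse negation, $\sign(d^S)=\sign(d)$, and hence by the same tables both the $\NN$--Cask and the $\SSSS$--Cask of $\PP(S)$ are again of type $(5.5.5)$ right.

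On the other hand, Corollary~\ref{cor:effects_of_psi} item 1, applied to the North Pole of $\PP(A)$, forces $\SSSS$ of $\PP(S)$ to be $(5.5.5)$ left; its symmetric counterpart applied to the South Pole of $\PP(A)$ (obtained by running item 1 on the reflected polyhedron $-\PP(A)=\PP(A^T)$ via Corollary~\ref{cor:symmetry} and using that the antipodal map reverses chirality) forces $\NN$ of $\PP(S)$ to be $(5.5.5)$ left. Both conclusions contradict the previous paragraph, establishing the desired impossibility. I expect the main obstacle to be the careful sign bookkeeping demanded by Lemma~\ref{lem:action_on_difference}: one must verify that the alternating sign pattern on $d$ is indeed a fixed point of the reverse--negated involution so that $\PP(S)$ inherits exactly the Cask data of $\PP(A)$; once this is confirmed, the contradiction with Corollary~\ref{cor:effects_of_psi} emerges immediately.
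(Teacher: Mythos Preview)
Your argument does not establish the impossibility; the contradiction you reach is an artifact of a misprint in Table~\ref{table:types_S_di}. Compare Example~6 in \S\ref{subsec:examples}: there $d=(-1,1,-1,1,-1,1)$, so $d_I<0$ for all odd $I$, yet the South Cask is stated to be $(5.5.5)$ \emph{left}. This agrees with the sentence in the paper's own proof (``$\SSSS$ $(5.5.5)$ right is equivalent to $d_I>0$, $I$ odd'') and with Corollary~\ref{cor:effects_of_psi}, but contradicts the first line of Table~\ref{table:types_S_di} as printed. With the corrected reading --- $\SSSS$ $(5.5.5)$ right iff $d_I>0$ for odd $I$ --- your step identifying $\sign(d)=(-,+,-,+,-,+)$ is wrong: the hypothesis forces $\sign(d)=(+,+,+,+,+,+)$. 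Then the reverse--negated tuple $d^S$ has all entries negative, so by the (corrected) tables both Casks of $\PP(S)$ come out $(5.5.5)$ \emph{left}, which is precisely what Corollary~\ref{cor:effects_of_psi} predicts. No contradiction emerges along your route.

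Conversely, if one adopts Table~\ref{table:types_S_di} literally, the clash you exhibit between the difference--tuple computation of $\SSSS$ for $\PP(S)$ and Corollary~\ref{cor:effects_of_psi} already occurs for \emph{any} $A$ with $\NN$ of type $(5.5.5)$ right, regardless of the South Cask of $\PP(A)$: rerun your computation dropping the hypothesis on $\SSSS$ and you will see the same clash. It therefore exposes the typo rather than ruling out the configuration.

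The paper's actual proof is a one--line telescoping argument that bypasses the Polar Exchange entirely: by Definition~\ref{dfn:cant_tuple} one has $d_I=c_{I+1}-c_I$ with $c_7=c_1$, so $\sum_{I=1}^{6} d_I=0$. Under the (correctly read) hypothesis all six $d_I$ are strictly positive, a contradiction. The left/left case is identical with all $d_I<0$.
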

\begin{proof}
We only prove the first statement. Let $c$ and  $d$ be the cant  and difference tuples.
Now,  $\NN$  $(5.5.5)$ right is equivalent to  $d_I>0$, $I$  even, and $\SSSS$  $(5.5.5)$ right is equivalent to $d_I>0$, $I$ odd,
(using  the first lines in tables \ref{table:types_N_di} and \ref{table:types_S_di}). This  contradicts $\sum_I d_I=\sum_i c_i-\sum_i c_i=0$.
 \end{proof}
The corollary above was proved in  \cite{Jimenez_Puente}, Theorem 14, with a longer proof.
Warning: notation disagreement:   $\SSSS  (5.5.5)$ left in \cite{Jimenez_Puente} is denoted $\SSSS  (5.5.5)$ right here.
\subsection{Equatorial Belt}\label{subsec:belt}
A box is  not  maximal with respect to the $f$--vector. It is the union of a North and a South Casks.
In this section we present an alcoved  polyhedron $\PP$  as the union of a North and a South Casks and an Equatorial Belt.
We show that this Belt is uniquely determined by the Polar Casks.

\begin{dfn}[Equatorial Belt]\label{dfn:Belt}
The Equatorial Belt of an alcoved polyhedron is the set of facets not belonging to the Polar Casks.
\end{dfn}

\begin{figure}[ht]
\centering
\includegraphics[width=10cm]{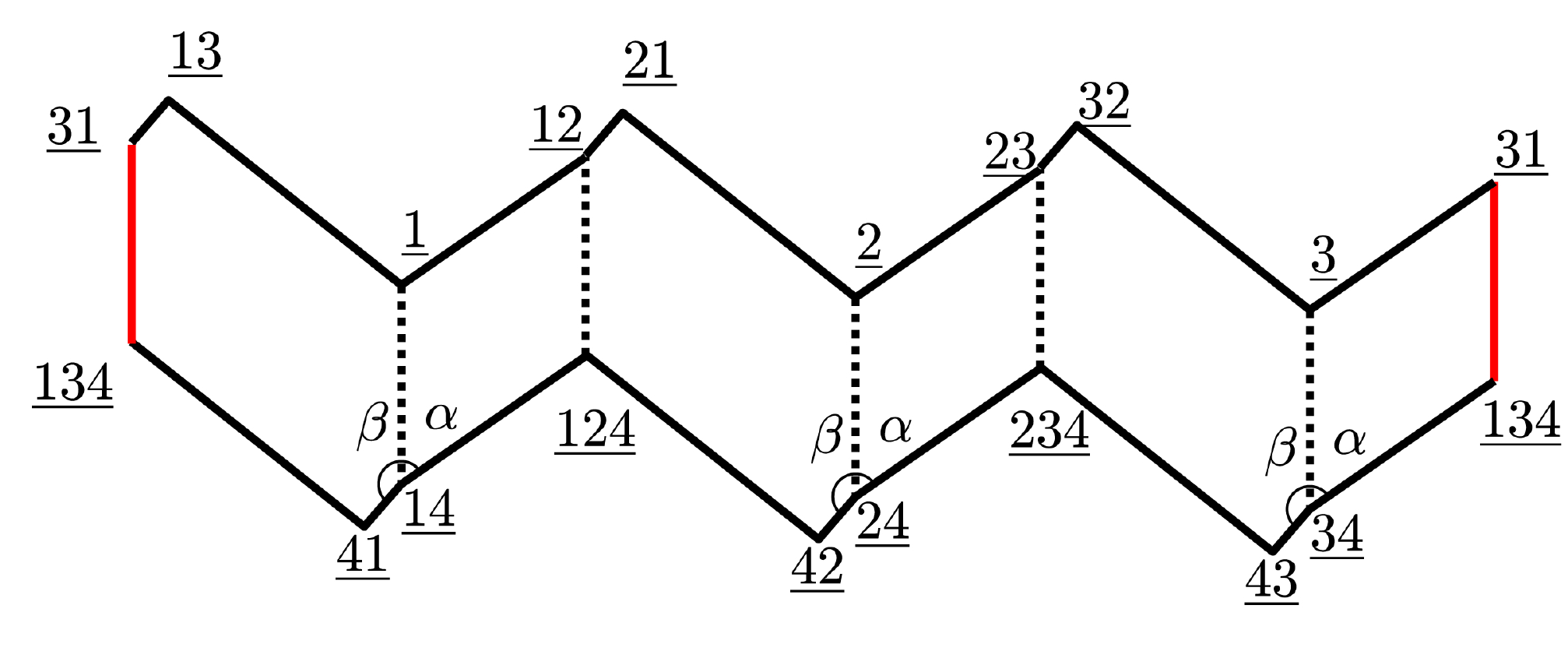}
\caption{Equatorial belt of example 6 in p.~\pageref{ex:QE6}. Dotted segments are meant to be folded down. Red segments are meant to be glued together. The vertical segments have direction $u_1+u_2+u_3$ and  $\alpha=\arccos(\sqrt{1/3})\simeq 54^\circ 44'$, $\beta=\arccos(\sqrt{2/3})\simeq 144^\circ  44'$ (see angles in p.~\pageref{com:angles}).}\label{fig_09_equatorial_belt_QE6}
\end{figure}

The Equatorial Belt of a box is trivial, while the Belt of the alcoved polyhedron in example \ref{ex:one_cant}
consist of just one 4--gon facet.

Let $(u_1,u_2,u_3)$ be the canonical basis in $\R^3$.
In a maximal alcoved  dodecahedron $\DD\subset \R^3$, the facets
not belonging to the Polar Casks contain some edge in the direction $u_1+u_2+u_3$, and conversely.
In order to get the Belt facets, we must find the edges with this direction. Clearly, three such edges are
$\underline{1}-\underline{14}$, $\underline{2}-\underline{24}$
and $\underline{3}-\underline{34}$. These edges determine six facets in $\DD$, which are the  Belt facets of $\DD$.

\begin{rem}\label{rem:Belt}
The Equatorial Belt of a maximal alcoved  dodecahedron $\DD\subset \R^3$ is uniquely determined by the Polar Casks.
\end{rem}

\begin{nota}\label{nota:Belt}
Let $\DD$ be a maximal alcoved dodecahedron.
In $\DD$, each Belt facet $F_j$ is the result of canting a
cantable edge $\ell_j$ in the bounding box $\BB$, $j\in [6]$ (cf. figure \ref{fig_03_three_cubes}).
The Belt is denoted $EB=(q_1,q_2,\ldots,q_6)$,  where  $q_j$ is the number of edges of $F_j$. By item \ref{dfn:p_vector} in section \ref{sec:maximality}, we know that  $q_j\in\{4,5,6\}$.
\end{nota}

\section{Families, group actions and quasi--Euclidean classification}\label{sec:qe_classif}
In this section,
we present a classification of alcoved polyhedra that encompasses three sorts of equivalence.
First, we want  all boxes to be equivalent. This equivalence is, so to say, affine. Second,  for two alcoved polyhedra $\PP$ and
$\PP'$ having the cube $\QQ$ as bounding box, if there exists a symmetry in the group $\GG_3$ taking $\PP$ to $\PP'$, then we want $\PP$
and $\PP'$ to be equivalent.  This equivalence is, so to say, Euclidean. Third, if there is a sufficiently small perturbation taking
$\PP$ to $\PP'$, then we also want $\PP$ and $\PP'$ to be equivalent. This equivalence is, so to say, topological.

These ideas require that we take \textbf{four steps in order to define the desired equivalence relation}.
The second and third steps define equivalence relations,  but only for polyhedra bounded by  the unit cube $\QQ$.

\medskip

For a given  box $\BB\subset \R^3$, let $\PPP_{\BB}$ be the \textbf{family}  of all
\textbf{alcoved convex  polytopes whose box is $\BB$} and let $\DDD_{\BB}\subset \PPP_{\BB}$  be the
\textbf{family}  of all \textbf{maximal alcoved dodecahedra whose box is $\BB$}.
Let $\PPP$ be the \textbf{family}  of all \textbf{alcoved polyhedra} and $\DDD$ the
\textbf{family}  of
all \textbf{maximal alcoved dodecahedra}.

\begin{dfn}[First step: Equivalence of boxes]\label{dfn:eq_1}
All boxes in $\R^3$ are equivalent.
\end{dfn}

\begin{dfn}[Second step: Euclidean classes in $\PPP_{\QQ}$ and in $\DDD_{\QQ}$]\label{dfn:eq_2}
The group $\GG_3$ acts on $\PPP_{\QQ}$ (resp. $\DDD_{\QQ}$) and an \textbf{Euclidean class} in $\PPP_{\QQ}$  (resp. in $\DDD_{\QQ}$)  is
a $\GG_3$--orbit in $\PPP_{\QQ}$ (resp. $\DDD_{\QQ}$).
\end{dfn}

Since the orbits of  $\GG_3$ have, at most 12 elements, the Euclidean classes defined above are many.
Inside each of them, Euclidean angles and distances (either Euclidean or tropical) are preserved.

\begin{dfn}[Third step: Equivalence by small perturbation]\label{dfn:eq_3}
Consider the  matrix $\leftidx{^D}Q\in M_4^{VI}$ in example \ref{ex:unit_cube}, the visualized matrix of the unit cube $\QQ$. For $i,j\in [3]$, $i\neq j$ and
real number $\epsilon$, consider
the \textbf{elementary matrices} $P_{ij,\epsilon}=(p_{kl})$ with $p_{ij}=\epsilon$ and $p_{kl}=0$ whenever $(k,l)\neq(i,j)$. For
$A\in M_4^{NI}$, define $A_{ij,\epsilon}:=\leftidx{^D}Q+P_{ij,\epsilon}$. For $\epsilon\in[0,2]$ we have $A_{ij,\epsilon}\in M_4^{NI}$
(easy to prove) and
for $\epsilon,\epsilon'\in(0,2)$ we define $\PP(A_{ij,\epsilon})$ and $\PP(A_{ij,\epsilon'})$ to be equivalent.
\end{dfn}

The extreme cases are    $\epsilon=2$, providing  a half--cube and  $\epsilon=0$ providing the whole cube, with
matrix $A_{23,\epsilon}=\leftidx{^D}Q+P_{23,\epsilon}=\begin{pmatrix}
0&-2&-2&-2\\-2&0&-2+\epsilon&-2\\-2&-2&0&-2\\0&0&0&0\\
\end{pmatrix}$. The polyhedra $\PP(A_{23,2})$ and $\PP(A_{23,0})$ are not quasi--Euclidean equivalent to $\PP(A_{ij,\epsilon})$, for $\epsilon\in(0,2)$  (see figure \ref{fig_10_non_equivalent}).

\begin{figure}[ht]
 \centering
  \includegraphics[width=9cm]{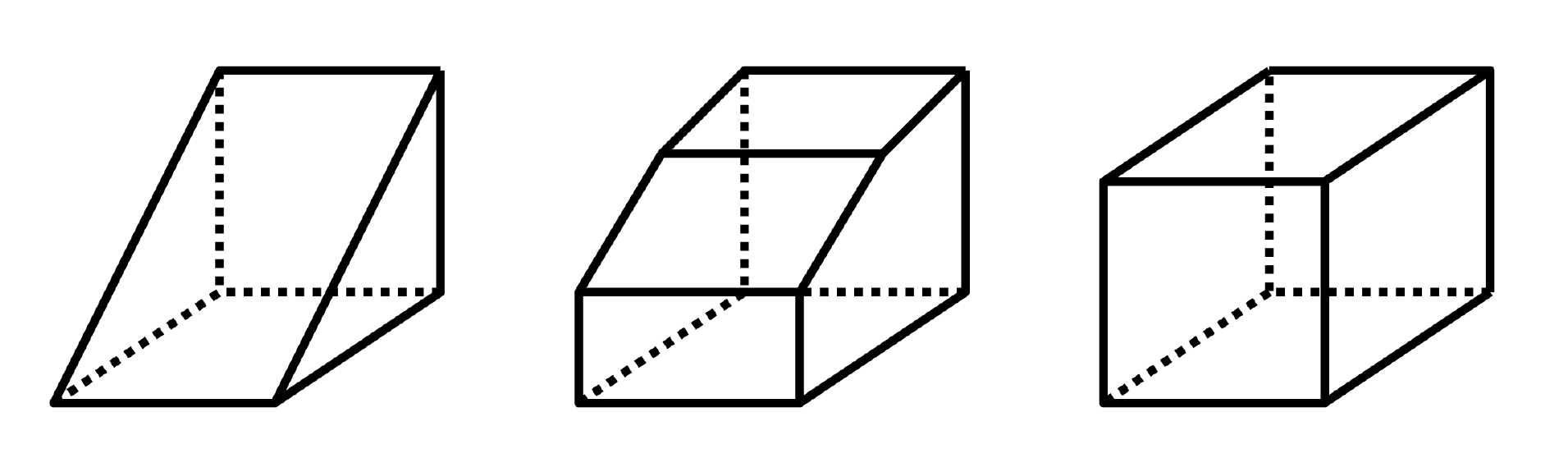}\\
  \caption{The three alcoved polyhedra shown are not quasi--Euclidean equivalent: they are all perturbations of the unit cube $\QQ$. }\label{fig_10_non_equivalent}
\end{figure}

Next, we define equivalence between polyhedra bounded by different  bounding boxes, putting together the  three definitions above.

\begin{dfn}[Fourth step: Quasi--Euclidean classes]\label{dfn:eq_4}
Let $\PP$ and $\PP'\subset \R^3$ be alcoved polyhedra, $\BB$ and $\BB'$ be the corresponding bounding boxes.
Let $f_{\BB,\BB'}:\R^3\rightarrow\R^3$ be the unique affine linear map taking generators
to generators, i.e.,  $f_{\BB,\BB'}(\underline{j})=\underline{j}$, for $j\in[4]$.
Let
$\PP_{\QQ}$ and $\PP'_{\QQ}$ be the images
under $f_{\BB,\QQ}$ and $f_{\BB',\QQ}$, respectively. If $\PP_{\QQ}$ and $\PP'_{\QQ}$ are equivalent
(according to definitions \ref{dfn:eq_2} and \ref{dfn:eq_3}), then we define $\PP$ and $\PP'$  to be
quasi--Euclidean equivalent.
It can be checked that this is an
equivalence relation in $\PPP$ (resp. $\DDD$).
\end{dfn}

For instance,
any two boxes with exactly
one sufficiently small cant are quasi--Euclidean equivalent.

Finally, we are ready to give the \textbf{proof to theorem \ref{thm:eight_classes}}.
\begin{proof}\label{proof:main_thm}
It is enough to find exactly eight different quasi--Euclidean classes in the family $\DDD_{\QQ}$, where $\QQ$ is the unit cube. In order
to do so, we combine North and  South
Casks,\footnote{By remark \ref{rem:Belt}, the Equatorial Belt is not to be taken int account.} getting $8^2=64$ possible cases.
By corollary \ref{cor:impossible}, two cases are ruled
out,  and we are left with $62$ cases.

Consider  $A\in M_4^{NI}$.
In corollary \ref{cor:type_global_2} we have seen that the types of the North and South Casks of $\PP(A)$
are determined by  the signs  of the difference tuple $d\in\R^6$ of $A$.

Consider the 62--element set $T$ consisting of all non--constant ordered tuples in $\{+,-\}^6$.  Consider the
\textbf{natural group action} \label{dfn:group_action_4} $G_3\times T\rightarrow T$ (the group $G_3$ was defined in p.~\pageref{dfn:group_action_3}). For example,
$-(16)(25)(34)\cdot (++++--)=-(--++++)=(++----)$. With a simple computer program, it is easily checked  that this action has exactly
8 orbits. Each orbit corresponds to one quasi--Euclidean class and the proof of theorem \ref{thm:eight_classes} is complete.
\end{proof}

\begin{rem}
\begin{enumerate}
\item The 8 examples in subsection \ref{subsec:examples} belong to different classes.
The cardinality of the corresponding orbits is 6, 12, 6, 6, 6, 2, 12 and 12, adding up to 62.
\item The reader should \emph{not} believe that the action  $G_3\times T\rightarrow T$ used in the proof above  equals the action on $T$ generated by one--shift, reverse and negate.
Indeed, the subgroup  of $\Z_2\times \Sigma_6$ generated by one--shift, reverse and negate is strictly larger than $G_3$.
\end{enumerate}
\end{rem}

\medskip

\begin{rem}\label{rem:relation_with_Jimenez_Puente}
In \cite{Jimenez_Puente}, the combinatorial classification of maximal alcoved dodecahedra is achieved attending to three
vectors: $t$, $p$ and $h$ (which are not independent and can attain only a few integer values).  Vectors $t, p, h$ can be readily
obtained from   the Cask
types and the Equatorial Belt. Indeed, the $p$--vector $(p_4,p_5,p_6)$ (see item \ref{dfn:p_vector} in section \ref{sec:maximality})
is obtained
by counting  4--gons, 5--gons and 6--gons. For instance, in example 3 in p. \pageref{ex:QE3} we have $\NN (4.5.6)$,
$\SSSS (6.5.4)$  and
$EB=(5,4,5,5,6,5)$, whence we have a total of three 4--gons, six 5--gons and three 6--gons, so $p=(3,6,3)$.

The \emph{hexagons--vector} or \emph{h--vector} \label{dfn:h_vector} was defined in \cite{Jimenez_Puente}.\footnote{The
$h$--vector considered here has nothing to do with the $h$--vector found in the literature on $f$--vectors.}
 It is
$h=(h_1,h_2,h_3,h_4)\in (\N\cup\{0\})^4$, where  $h_j$ denotes the number of maximal families consisting of $j$ pairwise adjacent hexagons. Clearly $h_1+2h_2+3h_3+4h_4=p_6$.  For instance, in example 3 in p. \pageref{ex:QE3} we have three 6--gons not touching each other, whence $h=(3,0,0,0)$.

The \emph{type--vector} or \emph{t--vector}  \label{dfn:t_vector} was defined in \cite{Jimenez_Puente}. It is  $t=(t_1,t_2,t_3)\in (\N\cup\{0\})^3$,  where $t_j$ is the number of  edges in the direction $u_{j-1}+u_{j+1}$, indices modulo 3 (recall item \ref{item:last} in remark \ref{rem:odds_and_ends}). For instance, in example 3 in p. \pageref{ex:QE3} we have two edges in the direction $u_1+u_2$ (resp. $u_2+u_3$) (resp. $u_3+u_1$) whence $t=(2,2,2)$.
Table \ref{table:qe_vs_combinatorial} shows the relation between the two classifications. The combinatorial classification is coarser than the quasi--Euclidean one.
\end{rem}

\begin{table}[ht]
\begin{center}
\begin{tabular}{|>{$}l<{$}||>{$}l<{$}| >{$}l<{$}| >{$}l<{$}| >{$}l<{$}| >{$}l<{$}|}
\hline
\text{QE\ class}&QE 1&QE 2&QE 3&QE 4&QE 5\\
p&(4,4,4)&(3,6,3)&(3,6,3)&(4,4,4)&(4,4,4)\\
h&(0,0,0,1)&(1,1,0,0)&(3,0,0,0)&(0,2,0,0)&(0,0,0,1)\\
t&(0,2,4)&(1,3,2)&(2,2,2)&(0,3,3)&(11,1,4)\\
\text{Comb.\ class} &6&2&1&4&5\\
\hline
\text{QE\ class}&&QE 6&QE 7&QE 8&\\
p&&(3,6,3)&(2,8,2)&(3,6,3)&\\
h&&(3,0,0,0)&(0,1,0,0)&(1,1,0,0)&\\
t&&(2,2,2)&(1,2,3)&(2,1,3)&\\
\text{Comb.\ class}&&1&3&2&\\
\hline
\end{tabular}
\end{center}
\caption{This table shows  the QE class, the $p$, $h$  and $t$ vectors
and the Combinatorial class it corresponds to, according to
\cite{Jimenez_Puente}.}
\label{table:qe_vs_combinatorial}
\end{table}

\subsection{Examples}\label{subsec:examples}
We give examples for all eight  quasi--Euclidean Classes, indicating a VI matrix, the difference tuple, the North and South Casks, the  Equatorial Belt,
and the $p$--vector  (defined in  item \ref{dfn:p_vector}, section \ref{sec:maximality}). The bounding box is, in every case, the cube of edge--length 8 (this can be seen in the matrix last column).

\begin{enumerate}
\item $\left(\begin{array}{rrrr}0&-4&-5&-8\\-3&0&-6&-8\\-4&-5&0&-8\\0&0&0&0\end{array}\right)$, $d=(-1,-1,1,-1,-1,3)$, $\NN (4.5.6)$, $\SSSS (4.5.6)$, $EB=(4,5,6,4,5,6)$, $p=(4,4,4)$ is an example for $QE1$

\item $\left(\begin{array}{rrrr}0&-4&-5&-8\\-3&0&-6&-8\\-2&-3&0&-8\\0&0&0&0\end{array}\right)$, $d=(-1,-1,-1,-1,1,3)$, $\NN (4.5.6)$, $\SSSS (5.4.6)$, $EB=(4,5,5,5,6,5)$, $p=(3,6,3)$ is an example for $QE2$ \label{ex:QE2}

\item $\left(\begin{array}{rrrr}0&-5&-6&-8\\-4&0&-5&-8\\-3&-4&0&-8\\0&0&0&0\end{array}\right)$, $d=(1,-1,-1,-1,1,1)$, $\NN (4.5.6)$, $\SSSS (6.5.4)$, $EB=(5,4,5,5,6,5)$, $p=(3,6,3)$ is an example for $QE3$ \label{ex:QE3}

\item $\left(\begin{array}{rrrr}0&-4&-5&-8\\-5&0&-6&-8\\-4&-5&0&-8\\0&0&0&0\end{array}\right)$, $d=(-1,-1,1,-1,1,1)$, $\NN (4.5.6)$, $\SSSS (4.6.5)$, $EB=(4,5,6,4,6,5)$, $p=(4,4,4)$ is an example for $QE4$

\item $\left(\begin{array}{rrrr}0&-5&-6&-8\\-6&0&-5&-8\\-5&-4&0&-8\\0&0&0&0\end{array}\right)$, $d=(1,-1,-1,1,1,-1)$, $\NN (4.5.6)$, $\SSSS (5.4.6)$, $EB=(5,4,6,4,5,6)$, $p=(4,4,4)$ is an example for $QE5$

\item $\left(\begin{array}{rrrr}0&-6&-5&-8\\-5&0&-6&-8\\-6&-5&0&-8\\0&0&0&0\end{array}\right)$, $d=(-1,1,-1,1,-1,1)$, $\NN (5.5.5)$ right, $\SSSS (5.5.5)$ left, $EB=(4,6,4,6,4,6)$, $p=(3,6,3)$ is an example for $QE6$ \label{ex:QE6}

\item $\left(\begin{array}{rrrr}0&-5&-6&-8\\-2&0&-7&-8\\-3&-4&0&-8\\0&0&0&0\end{array}\right)$, $d=(-1,-1,-1,-1,-1,5)$, $\NN (4.5.6)$, $\SSSS (5.5.5)$ left, $EB=(4,5,5,5,5,6)$, $p=(2,8,2)$ is an example for $QE7$

\item $\left(\begin{array}{rrrr}0&-4&-5&-8\\-3&0&-6&-8\\-4&-3&0&-8\\0&0&0&0\end{array}\right)$, $d=(-1,-1,-1,1,-1,3)$, $\NN (5.4.6)$, $\SSSS (5.5.5)$ left, $EB=(4,5,5,6,4,6)$, $p=(3,6,3)$ is an example for $QE8$.
\end{enumerate}

\section{Final remarks}\label{sec:final}
\subsection{Partial answers to two open questions}\label{subsec:partial_ans}
Below we give answers
in the case of maximal alcoved polyhedra. The case of arbitrary alcoved polyhedra follows from here  (by degeneration, i.e., collapsing of vertices).

\textbf{Question 1:} \textbf{How many distinct combinatorial types of  polyhedra belong to each
facet vector?} (question posed in \cite{Senechal} p.~198).
For  maximal alcoved dodecahedra, the  facet vectors
are  $p=(p_4,p_5,p_6)=(2,8,2)$, $(3,6,3)$  and  $(4,4,4)$ (see item \ref{dfn:p_vector} in section \ref{sec:maximality}) and the
number of combinatorial types are  $1,2$ and $3$  resp. (so  that $1+2+3=6$). This  is readily checked in   table \ref{table:qe_vs_combinatorial}, and follows from \cite{Jimenez_Puente}.

\textbf{Question 2} is
\textbf{D\"urer's Problem}, also known as \textbf{Shephard's conjecture}
(1975): \textbf{Does a net exist?} (question posed in \cite{ORourke}) The answer is yes, for every maximal alcoved dodecahedron $\DD$.
Indeed, the Equatorial Belt  can be unfolded (cutting one $u_1+u_2+u_3$ edge) to a planar layout (cf. figure \ref{fig_09_equatorial_belt_QE6}).
Besides, planar layouts of North and South Casks  exist, and  the three layouts can be glued together. Then one has to check that no
overlaps occur, for which one uses that  facial angles of $\DD$ are restricted, belonging to the following finite set  (in degrees, rounded to minutes):
$$\{90^\circ, 45^\circ, 135^\circ, 60^\circ, 120^\circ,
54^\circ 44', 125^\circ  16',  35^\circ 16',144^\circ  44'\}.$$ The latter set arises from the set \label{com:angles}
$$\{\arccos (0),\arccos(\sqrt{1/2}),\arccos (1/2),\arccos(\sqrt{1/3}), \arccos(\sqrt{2/3})\},$$ which are the angles  formed by pairs of
vectors in
$\{u_i, u_i+u_j, u_1+u_2+u_3\}$.  Non overlapping is checked class by class, in the quasi--Euclidean classification.

\subsection{Alcoved polyhedra occur in nature}\label{subsec:nature}
We have found the combinatorial types of certain alcoved polyhedra in two instances.
First, in Schlegel diagrams  of  soap bubbles assembled in a transparent container (see \cite{Fleck} p.~181, fig. 9.13). There,
three  classes of maximal dodecahedra are shown (items 3, 4 and 5 in fig. 9.13). Second, natural gas hydrates (also called clathrates) are
solid compounds of small gas molecules and water. The H structure of clathrates (shown  in figure in p.~354, bottom, in \cite{Sloan}) has the combinatorial type of a maximal alcoved dodecahedron.

\section*{Acknowledgments}\label{sec:acknowledgments} I am deeply grateful to the referee for careful reading and interest. His/her suggestions and patience have been  a great help  to bring this paper to light.
I also thank my friend  P.L. Claver\'{\i}a for producing 3--D  models of many alcoved dodecahedra   and for checking many computations.


\end{document}